\newtheorem{thm}{Theorem}[section]
\newtheorem{lemma}[thm]{Lemma}
\newtheorem{prop}[thm]{Proposition}
\newtheorem{cl}{Claim}
\newtheorem*{claim}{Claim}
\theoremstyle{definition}
\newtheorem*{remark}{Remark}
\newcommand{\Bin}{\mathrm{Bin}}
\newcommand{\Bvis}{\partial^{\mathrm{vis}}}
\newcommand{\Break}{\mathrm{Break}}
\renewcommand{\NG}{N_{\Gamma}}
\newcommand{\PG}{P_{\Gamma}}
\newcommand{\RG}{R_{\Gamma}}
\newcommand{\MCut}{\mathrm{MCut}}
\newcommand{\OMCut}{\mathrm{OMCut}}
\newcommand{\Shift}{\mathrm{Shift}}
\newcommand{\dist}{\mathrm{dist}}
\newcommand{\cA}{\mathcal{A}}
\newcommand{\cN}{\mathcal{N}}
\newcommand{\cP}{\mathcal{P}}
\newcommand{\E}{\mathcal{E}}
\newcommand{\Ex}{\mathbb{E}}
\newcommand{\Fo}{\mathcal{F}^{\mathrm{odd}}}
\newcommand{\mune}{\mu_n^{\mathrm{even}}}
\newcommand{\muno}{\mu_n^{\mathrm{odd}}}
\newcommand{\muo}{\mu^{\mathrm{odd}}}
\newcommand{\N}{\mathbb{N}}
\newcommand{\Ve}{V^{\mathrm{even}}}
\newcommand{\Vo}{V^{\mathrm{odd}}}
\newcommand{\Vva}{V^{\mathrm{vac}}}
\newcommand{\Z}{\mathbb{Z}}
\title{Odd cutsets and the hard-core model on $\Z^d$}
\date{\today}
\author{Ron Peled}
\thanks{Research of R.P. supported by a Marie Curie Reintegration Grant SPTRF from the Commission of the European Communities. Research of W.S.~supported by ERC Advanced Grant DMMCA}
\address{Ron Peled\hfill\break
Tel Aviv University\\
School of Mathematical Sciences\\
Tel Aviv, 69978, Israel.}
\email{peledron@post.tau.ac.il}
\urladdr{http://www.math.tau.ac.il/~peledron}
\author{Wojciech Samotij}
\address{Wojciech Samotij\hfill\break
Tel Aviv University\\
School of Mathematical Sciences\\
Tel Aviv, 69978, Israel;
and Trinity College\\
Cambridge CB2 1TQ, UK.}
\email{samotij@post.tau.ac.il}
\urladdr{http://www.math.tau.ac.il/~samotij}
\begin{document}

\maketitle

\begin{abstract}
  We consider the hard-core lattice gas model on $\Z^d$ and investigate its phase structure in high dimensions. We prove that when the intensity parameter exceeds $Cd^{-1/3}(\log d)^2$, the model exhibits multiple hard-core measures, thus improving the previous bound of $Cd^{-1/4}(\log d)^{3/4}$ given by Galvin and Kahn. At the heart of our approach lies the study of a certain class of edge cutsets in $\Z^d$, the so-called odd cutsets, that appear naturally as the boundary between different phases in the hard-core model. We provide a refined combinatorial analysis of the structure of these cutsets yielding a quantitative form of concentration for their possible shapes as the dimension $d$ tends to infinity. This analysis relies upon and improves previous results obtained by the
  first author.
\end{abstract}

\section{Introduction}

\label{sec:introduction}

The {\em hard-core model} (short for {\em hard-core lattice gas model}) was originally introduced in statistical mechanics as a simple mathematical model of a gas whose particles have non-negligible size and cannot overlap, see~\cite{Do, GiLeMa}. Later, the model was rediscovered in operations research in the context of certain communication networks, see~\cite{Ke85, Ke91}. The model has also attracted interest in ergodic theory where it is known by the name ``the golden mean subshift'' \cite{S90}, since its topological entropy on the one-dimensional lattice is the logarithm of the golden ratio $(1+\sqrt{5})/2$.

Let $G$ be a finite graph with vertex set $V$ and let $\lambda$ be a positive real. A {\em configuration} $\omega \in \{0, 1\}^V$ is called {\em feasible} if it is the characteristic function of some independent set in $G$, i.e., if for no adjacent pair $\{v_1, v_2\}$ of vertices of $G$, it satisfies $\omega(v_1) = \omega(v_2) = 1$. The {\em hard-core model on $G$ with activity $\lambda$} is the probability measure $\mu$ on $\{0, 1\}^V$ defined by
\[
\mu(\omega) =
\begin{cases}
  Z^{-1} \prod_{v \in V} \lambda^{\omega(v)} & \text{if $\omega$ is feasible}, \\
  0 & \text{otherwise},
\end{cases}
\]
where $Z$ is the appropriate normalizing constant (called the {\em
partition function}) which makes $\mu$ a probability measure, i.e.,
$Z = \sum_\omega \prod_v \lambda^{\omega(v)}$, where $\omega$ ranges
over the set of all feasible configurations. In other words,
$\mu(\omega)$ is proportional to $\lambda^{|\{v \colon \omega(v) =
1\}|}$ if $\omega$ is a feasible configuration and $\mu(\omega) = 0$
otherwise.

If $G$ is an infinite graph, then we call a probability measure on $\{0,1\}^V$ a {\em hard-core measure} if for each finite $W \subseteq V$, every $\omega_1 \in \{0,1\}^W$, and $\mu$-a.e. $\omega_2 \in \{0,1\}^{V \setminus W}$, we have
\[
\mu\Bigl(\omega|_W = \omega_1 \;\Bigm\vert\; \omega|_{V \setminus W} = \omega_2 \Bigr) =
\begin{cases}
  Z_W^{-1} \prod_{v \in W} \lambda^{\omega(v)} & \text{if $\omega_1 \sqcup \omega_2$ is feasible}, \\
  0 & \text{otherwise},
\end{cases}
\]
where $\omega_1 \sqcup \omega_2$ is the configuration on $V$ that agrees with $\omega_1$ on $W$ and with $\omega_2$ on $V \setminus W$. A standard compactness argument shows that for any infinite but locally finite graph $G$ and any $\lambda$, there exists at least one hard-core measure on $G$ with activity $\lambda$. One of the main questions in the study of the hard-core model is to decide, for given $G$ and $\lambda$, whether there is a unique or multiple hard-core measures on $G$ with activity $\lambda$. An important contribution to this problem was made by van den Berg and Steif~\cite{vdBS94}, continuing previous work of van den Berg \cite{vdB93}, who showed that for any connected infinite graph\footnote{More precisely, a countably infinite, locally finite, connected graph.} $G$, there is a unique hard-core measure with activity $\lambda$ whenever $\lambda<\frac{p_c(G)}{1-p_c(G)}$. Here, $p_c(G)$ stands for the site percolation threshold for the graph $G$.

The most-studied case of a hard-core model is that when $G$ is the
nearest-neighbor graph of the integer lattice $\Z^d$, i.e., the
graph on the vertex set $\Z^d$ in which two vertices are adjacent if
and only if their $L^1$-distance is equal to $1$. See
Figure~\ref{fig:simulation} below for a simulation of the model in
two dimensions. The above-mentioned result of van den Berg and
Steif, combined with the simple lower bound $p_c(\Z^d)\ge
\frac{1}{2d-1}$, proves that if $\lambda<\frac{1}{2d-2}$ there is a
unique hard-core measure with activity $\lambda$ on $\Z^d$. The
seminal result of Dobrushin~\cite{Do} says that when $d \geq 2$ and
$\lambda$ is sufficiently large (depending on $d$), then $\Z^d$
admits multiple hard-core measures with activity $\lambda$;
Dobrushin's result was later rediscovered by Louth~\cite{Lo}. The
lower bound on $\lambda$ proved by Dobrushin is rather weak (see the
discussion in~\cite{GaKa}) as it grows with $d$, quite in contrast
with the popular belief that for a given $\lambda$, the existence of
multiple hard-core measures in dimension $d$ should imply the
existence of multiple hard-core measures in all higher dimensions.
Almost 40 years had passed since Dobrushin published his result
before Galvin and Kahn~\cite{GaKa} proved that the threshold
activity that implies the existence of multiple hard-core measures
on $\Z^d$ tends to $0$ as $d \to \infty$.

\begin{thm}[\cite{GaKa}]
  \label{thm:GaKa}
  There exist constants $d_0$ and $C$ such that if $d \geq d_0$ and $\lambda \geq Cd^{-1/4}(\log d)^{3/4}$, then the graph $\Z^d$ has multiple hard-core measures with activity $\lambda$.
\end{thm}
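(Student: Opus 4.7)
The plan is a Peierls-type contour argument exploiting the bipartite structure $\Z^d = \Ve \sqcup \Vo$. On a box $\Lambda_n = [-n, n]^d$, let $\mune$ denote the hard-core measure conditioned on a boundary configuration that favours the ``even phase'' on $\partial\Lambda_n$---concretely, all vertices of $\Vo \cap \partial\Lambda_n$ are forced vacant---and let $\muno$ be its analogue with $\Ve$ and $\Vo$ swapped. The automorphism of $\Z^d$ exchanging the two sublattices carries $\mune$ to $\muno$, so it suffices to exhibit an event $A$ localised near the origin with $\mune(A) \geq \tfrac{3}{4}$ uniformly in $n$: any subsequential weak limit of the pair then yields two distinct infinite-volume hard-core measures on $\Z^d$. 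Fix an even vertex $v_0$ near the origin and take $A = \{\omega(v_0) = 1\}$, so the goal becomes $\mune(\omega(v_0) = 0) \leq \tfrac{1}{4}$.

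To each configuration $\omega$ with $\omega(v_0) = 0$ one associates an \emph{odd cutset} $\Gamma$: a minimal edge cut in $\Lambda_n$ separating $v_0$ from $\partial\Lambda_n$ whose inner boundary carries an odd-phase island (occupied odd vertices, vacant even neighbours just inside the cut) while the outer boundary sees the enforced even phase. A union bound then reduces the task to
\[
\sum_\Gamma \mune(\Gamma \text{ is a cutset of } \omega) \leq \tfrac{1}{4},
\]
summed over odd cutsets enclosing $v_0$ in $\Lambda_n$. Each summand is controlled by Dobrushin's shift argument: translating the interior of $\Gamma$ by a coordinate unit vector $e_i$ converts the odd-phase island into an even-phase one, and the combinatorial cost of the ``defects'' (vertices whose shifted image would violate the hard-core constraint on the cut) is paid for by a factor of $\lambda$ per defect. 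Averaging over the $2d$ coordinate directions and applying a Hall-type matching produces a per-cutset estimate of the form $\mune(\Gamma \text{ is a cutset of } \omega) \leq g(\lambda, d)^{|\Gamma|/d}$ for some $g$ that tends to $0$ with $d$ once $\lambda$ is above the threshold.

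The combinatorial heart of the proof is the enumeration of odd cutsets. A naive bound via connected subgraphs of $\Z^d$ gives $(2d)^{O(k)}$ cutsets of size $k$, far too many to conclude. The required refinement---originally developed by Sapozhenko for isoperimetric sets in the discrete cube and adapted by Galvin--Kahn to the lattice---is a multi-scale coarse-graining that exploits the near-hyperplanarity of large cutsets in high dimensions. Coarse-graining at a scale comparable to $d$ and labelling each coarse cell with only $O(\log d)$ bits yields a bound
\[
\#\{\Gamma : |\Gamma| = k,\ v_0 \in \mathrm{int}(\Gamma)\} \leq \exp\bigl(Ck(\log d)/d\bigr).
\]
Combining this with the per-cutset estimate and summing a geometric series over $k \geq 2d$ (the minimum cutset size around a vertex) gives the required bound, with the threshold $\lambda \geq Cd^{-1/4}(\log d)^{3/4}$ arising from the balance between the $(\log d)/d$ entropy per cutset edge and the $\lambda$-dependence produced by the shift estimate.

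The principal obstacle is the refined cutset enumeration. Turning the heuristic ``typical large odd cutsets are approximately axis-aligned hyperplanes'' into a rigorous coarse-graining with $O(\log d)$-bit labels per coarse cell---rather than the trivial $O(\log d)$ bits per \emph{edge}---requires carefully understanding the geometry of minimal cuts at high dimension. This is the combinatorial part of the argument that has to be done well, and a quantitative sharpening of exactly this step is what the rest of the paper is devoted to.
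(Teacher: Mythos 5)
This theorem is quoted from Galvin and Kahn~\cite{GaKa}; the paper does not reprove it but instead establishes close variants (Theorems~\ref{thm:GaKa-weak} and \ref{thm:main}, with exponents $d^{-1/4}(\log d)^2$ and $d^{-1/3}(\log d)^2$) via the odd-cutset machinery of~\cite{Pe}. Your high-level picture is right: a finitized Peierls argument, odd contours marking the phase flip, a Dobrushin shift giving a per-cutset bound of order $(1+\lambda)^{-|\Gamma|/(2d)}$, and a refined enumeration of cutsets. These are indeed the ingredients here and in~\cite{GaKa}.

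The genuine gap is in the step you yourself flag as the ``principal obstacle,'' and it is not merely a matter of being careful: the plan as stated cannot work. You propose a straight union bound $\sum_\Gamma \mune(\Gamma) \le 1/4$ with cutset count $\exp(Ck\log d/d)$. First, the arithmetic does not close: balancing $\exp(Ck\log d/d)$ against $(1+\lambda)^{-k/(2d)}$ forces $\log(1+\lambda) \gtrsim \log d$, i.e.\ $\lambda \gtrsim d^{\Theta(1)}$, far worse than $d^{-1/4}(\log d)^{3/4}$. Second, and more fundamentally, the paper points out (Section~\ref{sec:outline}, with the construction of Figure~\ref{fig:cubes}) that the number of odd cutsets with $L$ edges is at least $2^{(1/(2d)+c_d)L}$ with $c_d>0$, strictly exceeding the gain $(1+\lambda)^{L/(2d)}$ from the shift for every relevant $\lambda$. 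So the per-cutset union bound fails \emph{no matter how sharp} the enumeration is; one must instead split cutsets by a regularity parameter (treating irregular ones by a union bound weighted by $R_\Gamma$), and group the remaining regular cutsets into far fewer ``shape'' classes via interior approximations, then adapt the shift to such a class (the ``shift+erase'' map $T_2$, with a $\lambda^{-|X|}$ correction for the erased exposed vertices) so that a single application of Lemma~\ref{lemma:double-counting} beats the number of classes rather than the number of cutsets. This structural change---not just a better count---is what lets a sub-$\log d$ entropy per edge emerge and produces the $d^{-1/4}$ scale; your proposal omits it.
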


The bound in Theorem~\ref{thm:GaKa} is undoubtedly not best possible. It is commonly believed that multiple hard-core measures should appear when the activity $\lambda$ is either at $\Theta(1/d)$ or at $\Theta(\log d/d)$. A natural question in this line of research is that of the existence of a {\em critical activity} $\lambda_c(d)$ such that there are multiple hard-core measures as soon as $\lambda > \lambda_c(d)$ and a unique measure if $\lambda < \lambda_c(d)$. Even though it is believed that in the case of $\Z^d$, such critical activity exists, so far it has not been proved or disproved. Interestingly, it was shown in~\cite{BrHaWi} that there are graphs for which there is no such critical activity. Following~\cite{GaKa}, even if the hard-core model on $\Z^d$ does not have a critical activity, one can still define a similar quantity $\lambda(d)$ to be the supremum of those activities $\lambda$ for which there is a unique hard-core measure. With this notation at hand, one can rephrase Theorem~\ref{thm:GaKa} as $\lambda(d) = O(d^{-1/4}(\log d)^{3/4})$ as $d \to \infty$.

The aim of this paper is twofold. First, using the machinery of
\emph{odd cutsets} developed by the first author in~\cite{Pe}, we
give what we think is a shorter and more transparent proof of (a
slightly weaker version of) the result of Galvin and Kahn.

\begin{thm}
  \label{thm:GaKa-weak}
  There are constants $d_0$ and $C$ such that if $d \geq d_0$, then
  \[
  \lambda(d) \leq Cd^{-1/4}(\log d)^2.
  \]
\end{thm}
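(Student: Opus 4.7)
The plan is to prove Theorem~\ref{thm:GaKa-weak} by a Peierls-type contour argument organized around the notion of odd cutsets from~\cite{Pe}. I would fix a large even integer $n$ and work on the torus $\Z_{2n}^d$, inheriting the bipartition $V = \Ve \sqcup \Vo$ of $\Z^d$ into even and odd sublattices. Construct two candidate finite-volume measures $\mune$ and $\muno$ by conditioning the hard-core measure on a thin boundary shell being in the pure-even (respectively pure-odd) configuration; any weak subsequential limits give candidates $\mue$ and $\muo$ for distinct hard-core measures on $\Z^d$. To conclude that $\mue \neq \muo$, and hence that multiple hard-core measures exist at activity $\lambda$, it suffices to show that under $\mune$ a fixed even vertex $v_0$ is occupied with probability at least $3/4$, say, uniformly in $n$.

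To deliver this lower bound I would introduce a local phase indicator: call $v_0$ \emph{anomalous} in $\omega$ if its $O(1)$-neighborhood resembles the odd phase rather than the even phase (for instance, every vertex of $\Ve$ near $v_0$ is unoccupied while some vertex of $\Vo$ near $v_0$ is occupied). Because the boundary shell sits firmly in the even phase, any anomaly at $v_0$ must be sealed off by an edge cutset $\Gamma$ separating $v_0$ from the boundary, and the parity restrictions built into the definition of ``anomalous'' force $\Gamma$ to be an \emph{odd cutset} in the sense of~\cite{Pe}. A union bound over possible interfaces then gives
\[
\mune(v_0 \text{ is anomalous}) \leq \sum_{\Gamma} \mune(A_\Gamma),
\]
where $\Gamma$ runs over the odd cutsets surrounding $v_0$ and $A_\Gamma$ is the event that the interface at $v_0$ is exactly $\Gamma$.

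For each fixed $\Gamma$ I would bound $\mune(A_\Gamma)$ by the standard shift argument: translating the interior of $\Gamma$ by a well-chosen unit vector $e_i$ converts the odd-phase interior into an even-phase one, at the price of forcing the vertices of the \emph{visible boundary} $\Bvis \Gamma$ to be unoccupied in the post-shift configuration so as to avoid hard-core violations across $\Gamma$. A partition-function comparison then yields a Peierls bound of the form $\mune(A_\Gamma) \leq (1+\lambda)^{-c|\Bvis \Gamma|}$ for an absolute constant $c > 0$. The remaining task is to enumerate odd cutsets surrounding $v_0$: following and refining the approximation scheme of~\cite{Pe}, each odd cutset $\Gamma$ is encoded by a much coarser skeleton $\pi(\Gamma)$ that already determines it up to few local choices, producing an entropy bound of the form
\[
\#\{\Gamma \text{ odd cutset surrounding } v_0 : |\Bvis \Gamma| = M\} \leq \exp\bigl(CM d^{-1/4}(\log d)^2\bigr).
\]

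Combining the last two estimates,
\[
\mune(v_0 \text{ is anomalous}) \leq \sum_{M \geq M_0(d)} \exp\bigl(CM d^{-1/4}(\log d)^2 - c\lambda M\bigr),
\]
where the isoperimetric inequality in $\Z^d$ forces the minimum $M_0(d)$ to diverge with $d$, so the right-hand side is smaller than $1/4$ whenever $\lambda \geq C' d^{-1/4}(\log d)^2$ and $d$ is sufficiently large. The main obstacle, where essentially all of the work sits, is the cutset-counting step: constructing the approximation $\pi$ and proving the stated entropy bound requires a refinement of the combinatorial structure theory of odd cutsets from~\cite{Pe}, trading a slightly worse dependence on $\log d$ ($(\log d)^2$ here as against $(\log d)^{3/4}$ in~\cite{GaKa}) for a more transparent and unified argument.
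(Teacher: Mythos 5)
Your overall framework -- finitize the problem, locate an odd cutset interface $\Gamma$ separating the even vertex from the boundary, bound $\muo(\Break(\omega)=\Gamma)$ by a shift argument, and sum over $\Gamma$ -- is indeed the skeleton of the paper's argument.  However, the decisive step in your proposal, the entropy bound
\[
\#\{\Gamma \text{ odd cutset surrounding } v_0 : |\Bvis\Gamma| = M\} \leq \exp\bigl(CM d^{-1/4}(\log d)^2\bigr),
\]
is false, and a direct union bound over odd cutsets cannot work no matter how one refines it.  The paper explicitly makes this point in the outline: setting $\OMCut_L=\{\Gamma\in\OMCut : |\Gamma|=L\}$, one has $|\OMCut_L|\ge 2^{\left(\frac{1}{2d}+c_d\right)L}$ for some $c_d>0$ (cutsets that approximate a large cube, Figure~\ref{fig:cubes}).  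Since $|\Bvis\Gamma|\approx |\Gamma|/(2d)$, this means the count in your display is at least $2^{M(1+o(1))}$, not $\exp(o(M))$; and the shift argument gives only $(1+\lambda)^{-|\Gamma|/(2d)}\approx (1+\lambda)^{-M}$, so the union bound $\sum_M 2^M(1+\lambda)^{-M}$ diverges unless $\lambda\ge 1$.  Your phrase ``the skeleton $\pi(\Gamma)$ already determines $\Gamma$ up to few local choices'' is exactly where this breaks: the local choices aggregate to $2^{\Theta(M)}$ distinct cutsets per skeleton, and that entropy cannot be swept into the constant.

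What the paper actually does to get around this is a two-pronged argument.  It introduces a regularity parameter $\RG(E_1(\Gamma))$ (Section~\ref{sec:counting-odd-cutsets}) and splits $\Omega$ into irregular configurations $\Omega_1^\beta$ (where $\RG(E_1)\ll|\Gamma|$) and regular ones $\Omega_2^\beta$.  For irregular $\Gamma$, it uses Theorem~\ref{thm:OMCutMR}, which bounds $|\OMCut(M,R)|$ in terms of $R$, not $|\Gamma|$; the point is that the Peierls factor $(1+\lambda)^{-|\Gamma|/(2d)}$ overwhelms this smaller count precisely because $R\ll|\Gamma|$.  For regular $\Gamma$, the paper counts interior approximations (shapes) rather than cutsets -- Theorem~\ref{thm:OMCut-approx} gives $\exp(C(\log d)^2 L/d^{3/2})$, consistent with your intuition -- but a shape does \emph{not} determine the cutset, so the recovery loses information.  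This is handled by replacing the plain shift $T_1$ with the ``shift+erase'' transformation $T_2$, which additionally sets the exposed vertices to vacant and pays for the lost information via a combinatorial factor $\binom{\lfloor r/8\rfloor}{x}\lambda^x$; that factor is controlled only when $\Gamma$ is regular, which is why the irregular case must be treated separately.  Your proposal, as written, collapses both branches into a single union bound and omits both the regularity split and the modified transformation; plugging in the true count $2^{\Theta(M)}$ would immediately destroy the final summation.
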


Second, we prove a refined version of one of the main results
of~\cite{Pe}, the so-called {\em interior approximation} theorem for
odd cutsets in $\Z^d$ and use it to improve the bound in
Theorem~\ref{thm:GaKa}.

\begin{thm}
  \label{thm:main}
  There are constants $d_0$ and $C$ such that if $d \geq d_0$, then
  \[
  \lambda(d) \leq Cd^{-1/3}(\log d)^2.
  \]
\end{thm}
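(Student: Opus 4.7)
My plan is a Peierls-type argument built on the odd cutset machinery of~\cite{Pe}, refining the strategy used to establish Theorem~\ref{thm:GaKa-weak}. On a large box $\Lambda_n \subset \Z^d$, let $\mune$ be the hard-core measure with the boundary condition forcing every odd-parity vertex of $\partial \Lambda_n$ to be vacant, and let $\muno$ be the corresponding measure with even-parity boundary vertices forced to be vacant. A standard subsequential-limit (weak compactness) argument reduces the task of producing two distinct infinite-volume Gibbs measures to the following finite-volume statement: for $\lambda \geq Cd^{-1/3}(\log d)^2$, the $\mune$-probability that the origin lies in the \emph{odd phase} is bounded strictly below $1/2$, uniformly in $n$.

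\textbf{Cutsets and Peierls weights.} For each feasible configuration $\omega$, the interface between the even-phase component of $\partial \Lambda_n$ and an odd-phase region possibly surrounding the origin is captured, in the spirit of~\cite{Pe}, by a distinguished odd cutset $\Gamma$, so the ``origin is in the odd phase'' event decomposes as a disjoint union, over odd cutsets $\Gamma$ separating $0$ from $\partial \Lambda_n$, of events that pin down $\Gamma$. To bound $\mune(\Gamma)$ for a fixed $\Gamma$, I would use a shift/flip map $T$ that modifies $\omega$ in a neighborhood of $\Gamma$, producing a feasible configuration without this interface and with a net gain of occupied sites proportional to a quantifiable fraction of the edges of $\Gamma$; controlling the fiber size of $T$ then yields a Peierls estimate of the form $\mune(\Gamma) \leq \rho(\lambda,d)^{|\Gamma|}$ for an explicit rate $\rho$.

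\textbf{Refined counting; main obstacle.} The decisive step is to bound the number of odd cutsets of size $L$ surrounding the origin. Here one applies the refined interior approximation theorem: each odd cutset is coded by a low-complexity skeleton together with a local correction, the skeletons are enumerated by coarse shape, and the number of cutsets compatible with a given skeleton is controlled by isoperimetric constraints on the visible boundary in $\Z^d$. The refinement over~\cite{Pe} tightens both the approximation error and the entropy of the correction step, producing a sharper exponential count that, when balanced against the Peierls weight $\rho(\lambda,d)^{|\Gamma|}$ and summed over $L$, forces the activity threshold down to $Cd^{-1/3}(\log d)^2$, improving on the exponent $1/4$ of~\cite{GaKa}. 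The principal obstacle is precisely this refined combinatorial analysis: one must extract extra structural rigidity from odd cutsets in high dimension and verify that the Peierls shift argument survives under the tightened notion of interface. The Peierls-weight estimate itself and the passage from the finite-volume bound to two distinct infinite-volume Gibbs measures follow along lines already present in~\cite{Pe} and in the proof of Theorem~\ref{thm:GaKa-weak}.
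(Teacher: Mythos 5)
Your high-level architecture matches the paper's: finitize via even/odd boundary conditions, extract an odd cutset interface $\Break(\omega)$, bound its probability by a measure-preserving shift map combined with Lemma~\ref{lemma:double-counting}, and use an interior approximation theorem to control the entropy of cutsets. Where the proposal becomes vague, however, is exactly where the new content lives: ``tightens both the approximation error and the entropy of the correction step'' is a placeholder, and as written the argument would only reproduce the $d^{-1/4}$ bound of Theorem~\ref{thm:GaKa-weak}, not improve it to $d^{-1/3}$.

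The specific idea you are missing is the parametrization of odd cutsets by the fraction $\varepsilon = |\Gamma_r|/|\Gamma|$ of edges incident to non-exposed vertices (a vertex $v \in E_1(\Gamma)$ is exposed if $P_\Gamma(v) \geq 2d - \sqrt{d}$), together with a partition of the bad event into ``irregular'' cutsets with $\varepsilon \lesssim d^{-\beta}$ and ``regular'' ones with $\varepsilon \gtrsim d^{-\beta}$. For the regular part one uses the shift+erase map $T_2$, whose Peierls gain is of order $\varepsilon|\Gamma|/d$ (occupied sites created next to non-exposed boundary vertices, minus a correction for exposed vertices that must be erased), and the key technical advance is Theorem~\ref{thm:OMCuteps-approx}: the entropy of interior approximations to cutsets in $\OMCut(\varepsilon)$ scales like $\sqrt{\varepsilon}(\log d)^2|\Gamma|/d^{3/2}$ rather than $(\log d)^2|\Gamma|/d^{3/2}$, proved via the refined dominating-set Proposition~\ref{prop:Edeltat}. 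Balancing $\lambda \varepsilon|\Gamma|/d$ against $\sqrt{\varepsilon}(\log d)^2|\Gamma|/d^{3/2}$ forces $\lambda \gtrsim (\log d)^2/\sqrt{\varepsilon d}$, worst at $\varepsilon = d^{-\beta}$. For the irregular part one uses the pure shift map $T_1$ and the count $|\OMCut(M,R)| \leq \exp(C(\log d)^2 R/d)$ with $R \lesssim |\Gamma|/d^\beta$, which forces $\lambda \gtrsim (\log d)^2 d^{-\beta}$. Equating the two constraints yields $\beta = 1/3$. Without the $\varepsilon$-decomposition, the $\sqrt{\varepsilon}$ improvement in the approximation count, and the separate treatment of the irregular regime, the intended refinement cannot get off the ground.
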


We would like to point out that, mimicking the general strategy in~\cite{Pe}, we try to separate the probabilistic and geometric parts of the argument as much as possible. First of all, we hope that this makes our argument better structured and more easily comprehensible. Secondly, we aim to stress our belief that a further refinement of the geometric part will improve the bound on $\lambda(d)$. Last but not least, this separation allows us to reuse most of the proof of Theorem~\ref{thm:GaKa-weak} in the proof of Theorem~\ref{thm:main}.

\section{Preliminaries}

\label{sec:preliminaries}

\subsection{Notation}

\noindent
{\bf General.}
For a set $X$, we will denote the power set (the set of all subsets) of $X$ by $\cP(X)$ and for a positive integer $k$, we will abbreviate $\{1, \ldots, k\}$ by $[k]$. We always write $\log$ for the natural logarithm.

\smallskip
\noindent
{\bf Graph theory.}
Given a graph $H$, we will denote its vertex and edge sets by $V(H)$ and $E(H)$, respectively. For the sake of clarity, for two vertices $v, w \in V(H)$, we will sometimes write $vw$ for the unordered pair $\{v, w\}$. If $W \subseteq V(H)$, then we will write $H[W]$ for the graph induced by $H$ on $W$ and $H \setminus W$ for $H[V(H) \setminus W]$. If $F \subseteq E(H)$, then we will write $H \setminus F$ for the graph obtained from $H$ by removing from it all edges in $F$. For every $v \in V(H)$, the {\em neighborhood} of $v$, denoted $N(v)$, is the set of all vertices that are adjacent to $v$; for a set $S \subseteq V(H)$, we let $N_H(S) = \bigcup_{v \in S} N(v)$. The distance between two vertices $v, w \in V(H)$, denoted $\dist_H(v,w)$, is the length of the shortest path connecting $v$ and $w$ in $H$ or $\infty$ if there is no such path. A set $W \subseteq V(H)$ is {\em connected} if each pair of vertices in $W$ is connected by a path, i.e., if $\dist_H(v,w) < \infty$ for every $v, w \in W$. Sometimes we will drop the subscript $H$ to keep things less cluttered, provided that the graph $H$ is clear from the context. The {\em connected component} of a vertex $v$ (a connected set $W$) is the largest connected subset of $V(H)$ that contains the vertex $v$ (the set $W$). For a positive integer $k$, the {\em $k$th power of $H$}, denoted $H^k$, is the graph on the vertex set $V(H)$ whose two vertices $v$ and $w$ are adjacent if and only if $\dist_H(v, w) \leq k$. Finally, we will write $\Delta(H)$ for the maximum degree of $H$.

\smallskip
\noindent
{\bf The graph $\Z^d$.}
Since this graph is clearly connected and bipartite, it is natural to refer to its two color classes as the even and the odd vertices, denoted $\Ve$ and $\Vo$ respectively. The set $\Ve$ ($\Vo$) consists of those lattice points whose coordinates sum up to an even (odd) number. Recall that two vertices $v, w \in \Z^d$ are adjacent if and only if $w = v + f$ for some $f \in \Z^d$ whose $L^1$-norm is equal to $1$. It is therefore convenient to denote all such vectors $f$, i.e., all $\{-1,0,1\}$-vectors with exactly one non-zero coordinate, by $f_1, \ldots, f_{2d}$. For example, observe that $N(v) = \{v + f_j \colon j \in [2d]\}$ for every $v \in \Z^d$. Finally, when a feasible configuration $\omega \in \{0,1\}^{\Z^d}$ is clear from the context, $\Vva$ will denote the set of all {\em vacant} vertices, i.e., $\Vva = \{v \in \Z^d \colon \omega(v) = 0\}$. If a vertex is not vacant, then we say that it is {\em occupied}.

\subsection{Finitized version of the problem}

As observed by Galvin and Kahn~\cite{GaKa}, the problem of showing
the existence of multiple hard-core measures can be finitized as
follows. Let $G_n$ be the subgraph of $\Z^d$ induced on the set of
lattice points in $[-n,n]^d$ and define the boundary $B_n$ of this
subgraph by $B_n = [-n,n]^d \setminus [-(n-1),n-1]^d$. Let $\mu_n$
be the (unique) hard-core measure on $G_n$ with activity $\lambda$,
let $\mune$ be $\mu_n$ conditioned on the event $\omega(v) = 1$ for
all $v \in B_n \cap \Ve$, and define $\muno$ accordingly. As shown
in~\cite{GaKa}, there are positive constants $c$ and $C$ such that
the following holds. If for every fixed $x \in \Z^d$ and
sufficiently large $n$,
\begin{align*}
  \mune( \omega(x) = 1) & < c/d \quad \text{if $x \in \Vo$} \quad \text{and} \\
  \muno( \omega(x) = 1) & < c/d \quad \text{if $x \in \Ve$},
\end{align*}
then there are multiple hard-core measures on $\Z^d$ with activity $\lambda$, provided that $\lambda > C/d$. In view of the above, Theorem~\ref{thm:main} will be a direct consequence of the next theorem, whose proof is the main part of this paper.

\begin{thm}
  \label{thm:finitized}
  There exist constants $d_0$ and $C$ such that if $d \geq d_0$, $\lambda \geq Cd^{-1/3}(\log d)^2$, $n \geq 2$, and $x$ is an arbitrary even vertex of $G_n$, then
  \[
  \muno( \omega(x) = 1 ) < c/d.
  \]
  Moreover, the same result holds when the roles of even and odd vertices are reversed.
\end{thm}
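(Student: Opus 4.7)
The plan is a Peierls-type argument in which the event $\omega(x) = 1$ is charged to the existence of an odd cutset $\Gamma$ separating $x$ from the boundary phase. Since $x \in \Ve$ while every vertex of $B_n \cap \Vo$ is forced to be occupied under $\muno$, any configuration with $\omega(x) = 1$ must contain an interface between a local ``even-phase'' region around $x$ and the global odd-phase near $B_n$. Following~\cite{Pe}, I would let $S$ be the connected component of $x$ in a suitable enlargement of the set of occupied even vertices together with their vacant odd neighbors, and take $\Gamma \subseteq E(\Z^d)$ to be its edge boundary. Then $\Gamma$ is an odd cutset around $x$, and
\[
\muno(\omega(x) = 1) \;\leq\; \sum_{\Gamma} \muno\bigl(\Gamma(\omega) = \Gamma\bigr),
\]
where the sum is over odd cutsets around $x$, the smallest of which has $|\Gamma| = 2d$.

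The first ingredient is a per-cutset bound of the form $\muno(\Gamma(\omega) = \Gamma) \leq (1+\lambda)^{-c|\Gamma|/d}$, obtained via a shift map $\Shift_\Gamma$. The map translates the part of $\omega$ inside $\Gamma$ by a unit lattice vector $f_j$ chosen by averaging over $j \in [2d]$; the translated configuration is still feasible, and because the forced vacant layer of odd vertices hugging $\Gamma$ from the inside is replaced by a shorter vacant layer after the shift, one gains on average $\asymp |\Gamma|/(2d)$ odd sites that may now be occupied or vacated independently. The resulting product structure yields a weight factor of $(1+\lambda)^{c|\Gamma|/d}$ in favor of configurations without the cutset; provided $\Shift_\Gamma$ is shown to be of bounded multiplicity, this gives the claimed bound. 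This step is essentially the entire content of the proof of Theorem~\ref{thm:GaKa-weak} and can be reused intact.

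The second ingredient is an entropy bound on the number $\cN(L)$ of odd cutsets of size $L$ around $x$. The naive per-face bound yields $\cN(L) \leq \exp\!\bigl(cL(\log d)/d^{3/4}\bigr)$, and combined with the shift estimate this is exactly what produces the Galvin--Kahn threshold of Theorem~\ref{thm:GaKa}. To reach $d^{-1/3}(\log d)^2$, I would invoke the sharpened interior approximation theorem previewed in the introduction: each odd cutset $\Gamma$ is determined, up to a controlled ``defect'' correction, by an interior object $I(\Gamma)$ of much smaller cardinality living essentially on a lower-dimensional sublattice. Counting interiors and corrections separately yields $\cN(L) \leq \exp\!\bigl(cL(\log d)^2/d^{2/3}\bigr)$. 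Substituting into the sum above and optimizing $L$ at the dominant scale $L \asymp 2d$, one finds that $\muno(\omega(x)=1) \leq c/d$ precisely when $\lambda \geq Cd^{-1/3}(\log d)^2$.

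The main obstacle is the refined interior approximation theorem itself; the other steps are standard refinements of Peled's machinery once the geometric count is in hand. Carrying out this refinement requires a delicate decomposition of $\Gamma$ into a large ``regular'' piece, which must be encoded by its interior with very little extra information, and a small ``defect'' piece, which must admit an efficient encoding of its own. The improvement over~\cite{Pe} amounts to showing that the aggregate entropy of this encoding drops from $(\log d)/d^{3/4}$ to $(\log d)^2/d^{2/3}$ per face of $\Gamma$, which is where essentially all of the quantitative gain comes from.
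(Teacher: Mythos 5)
The proposal reproduces the high-level Peierls strategy (charge $\omega(x)=1$ to an odd cutset, bound the probability of each cutset via a shift transformation, sum over cutsets), but the route you plan to take has a fatal flaw that the paper itself explicitly flags. You propose to prove a bound $\cN(L)\leq \exp\bigl(cL(\log d)^2/d^{2/3}\bigr)$ on the total number of odd cutsets of size $L$ and close with a union bound over all such cutsets. This cannot work: as noted in Section~\ref{sec:outline}, $|\OMCut_L|\geq 2^{\left(\frac{1}{2d}+c_d\right)L}$ along a subsequence, so the number of cutsets of size $L$ is too large by a factor that would only be beaten by $\lambda$ of order $1$. The interior approximation is \emph{not} a compression scheme that, together with ``controlled defect corrections,'' reconstructs $\Gamma$ exactly; it is a deliberately lossy coarsening, and each approximation class contains exponentially many cutsets. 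The paper circumvents the failure of the direct union bound by combining the coarsening with the \emph{shift+erase} transformation $T_2$, whose inversion requires only an interior approximation (not $\Gamma$ itself) plus the small set $X_j(\omega)\subseteq E_{1,j,x}$. The union bound is taken over interior approximations and over the choice of $X$, not over cutsets, and the extra multiplicity $\binom{\lfloor r/8\rfloor}{x}$ is absorbed by the $(1+\lambda)^{4x}\lambda^{-x}$ factor gained by erasing.

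Two further points would break even a corrected version of your plan. First, the smallest odd cutset around an even vertex $x$ has $|\Gamma|\geq 2d(2d-1)=\Theta(d^2)$ (Proposition~\ref{prop:Gamma-size}), not $2d$; the quadratic size is essential, since $T_1$ gains only $(1+\lambda)^{|\Gamma|/(2d)}$ and the resulting exponent $\Theta(d)$ at the smallest scale is exactly what makes the final bound of order $(1+\lambda)^{-\Theta(d^{1-\beta})}\ll 1/d$. ``Optimizing $L$ at the dominant scale $L\asymp 2d$'' would give a trivial exponent. Second, the split into ``regular'' and ``irregular'' cutsets in the paper is a partition of the \emph{event} $\Omega$ (via the parameter $\beta$ and the ratio $|\Gamma_r|/|\Gamma|$), with the two pieces handled by different transformations ($T_1$ together with Theorem~\ref{thm:OMCutMR} for the irregular part; $T_2$ together with Theorem~\ref{thm:OMCuteps-approx} for the regular part, further sliced by $\varepsilon=2^{-k}$). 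Your ``regular piece plus defect piece'' decomposition of a single cutset is a different (and insufficient) notion; without the event-level dichotomy, the interior approximation bound alone does not cover cutsets with few non-exposed vertices, and the $T_1$ bound alone does not cover cutsets with many of them.
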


\begin{remark}
  We will first prove Theorem~\ref{thm:finitized} under the stronger assumption that $\lambda \geq Cd^{-1/4}(\log d)^2$, which implies Theorem~\ref{thm:GaKa-weak}. Later, we will refine our approach and prove it for the stated regime.
\end{remark}

\subsection{Tools}

In this section, we collect two auxiliary lemmas that will be used in the proof of our main result. Both are fairly standard and can be surely found in the literature. The proofs are given only for the sake of completeness.

\begin{lemma}
  \label{lemma:connected-sets}
  Let $M$ be an integer, let $H$ be an arbitrary graph, and let $v \in V(H)$. The number of connected sets $E \subseteq V(H)$ such that $v \in E$ and $|E| = M$ does not exceed $(\Delta(H))^{2M-2}$.
\end{lemma}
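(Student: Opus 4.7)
The plan is to use the standard \emph{tree encoding} argument: for each connected set $E \ni v$ with $|E| = M$, I will associate a walk in $H$ starting at $v$ of length exactly $2M-2$ in such a way that the walk determines $E$ uniquely. Since there are at most $\Delta(H)^{2M-2}$ walks of that length starting at $v$, the bound follows.

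First I would fix, for each such $E$, a spanning tree $T = T_E$ of the subgraph of $H$ induced on $E$ (which exists since $E$ is connected, and has exactly $M-1$ edges). I would then run the depth-first search of $T$ rooted at $v$, using an arbitrary but fixed ordering of $V(H)$ to break ties when choosing the next child to explore. The DFS produces a closed walk in $T$ that traverses every edge of $T$ exactly twice (once going down, once coming back up), so its length is exactly $2(M-1) = 2M-2$. This walk is, in particular, a walk in $H$ that starts at $v$.

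Next I would observe that the assignment $E \mapsto (\text{DFS walk of } T_E)$ is injective. Indeed, the DFS walk visits every vertex of $T_E$ (equivalently, of $E$), so the set of vertices appearing in the walk is precisely $E$; reading off this vertex set recovers $E$ from the walk. Hence the number of connected sets $E$ of size $M$ containing $v$ is at most the number of walks in $H$ of length $2M-2$ starting at $v$. Bounding the latter by $\Delta(H)^{2M-2}$ (at each of the $2M-2$ steps we choose one of at most $\Delta(H)$ neighbors of the current vertex) gives the lemma.

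There is no real obstacle here; the only minor point to be careful about is ensuring that the DFS is \emph{deterministic} given $E$ so that the map $E \mapsto$ walk is well-defined, which is handled by fixing a tie-breaking order on $V(H)$ once and for all before the argument begins.
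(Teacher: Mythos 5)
Your proof is correct and follows essentially the same argument as the paper's: fix a spanning tree of $E$, traverse it by a depth-first search from $v$ to obtain a closed walk of length $2M-2$, and bound the count by the number of such walks. The only addition is the explicit tie-breaking order to make the DFS deterministic, which is a harmless (and arguably welcome) clarification.
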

\begin{proof}
  For every such $E$, we fix an arbitrary spanning tree $T_E$ of $E$. Starting from $v$, we perform a depth-first search on $T_E$, starting and ending at $v$ and passing through every edge exactly twice. Since every spanning tree of $E$ has exactly $M-1$ edges, the number of possibilities for such a walk (and hence for $E$) is not larger than the number of walks of length $2M-2$ in $H$ that start at $v$.
\end{proof}

Our second lemma formalizes the following intuition. If an event $A$ in some discrete probability space $(X, \mu)$ admits an expanding transformation $T \colon A \to \cP(X)$, i.e., a map for which $\mu(T(a))$ is much larger than $\mu(a)$ for every $a \in A$, then $\mu(A)$ is small, provided that no $x \in X$ appears in the image of $T$ too many times.

\begin{lemma}
  \label{lemma:double-counting}
  Let $X$ be a finite set, let $A \subseteq X$, and let $\mu$ be a probability measure on $X$. Suppose that there are positive numbers $p$ and $q$ and a mapping $T \colon A \to \cP(X)$ such that for each $a \in A$ and each $x \in X$,
  \[
  \mu(T(a)) \geq q \cdot \mu(a) \quad \text{and} \quad |\{b \in A \colon x \in T(b)\}| \leq p.
  \]
  Then $\mu(A) \leq p/q$.
\end{lemma}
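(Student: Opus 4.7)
The plan is a clean double-counting argument: I will estimate the quantity $S = \sum_{a \in A} \mu(T(a))$ from both sides, using the two hypotheses on $T$ one at a time.

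For the lower bound, the assumption $\mu(T(a)) \geq q \cdot \mu(a)$ gives directly
\[
S \;\geq\; q \sum_{a \in A} \mu(a) \;=\; q \cdot \mu(A).
\]
For the upper bound, I would expand $\mu(T(a)) = \sum_{x \in T(a)} \mu(x)$ and swap the order of summation to rewrite
\[
S \;=\; \sum_{x \in X} \mu(x) \cdot \bigl|\{a \in A : x \in T(a)\}\bigr|.
\]
The second hypothesis bounds each multiplicity by $p$, so $S \leq p \sum_{x \in X} \mu(x) = p$. Combining the two estimates yields $q \cdot \mu(A) \leq p$, i.e., $\mu(A) \leq p/q$, as required.

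There is no real obstacle here: the argument is purely arithmetic and the two hypotheses combine in essentially the only natural way (each is applied exactly once, in opposite directions, to the same double sum). The only minor point worth flagging is that the indicator $\mathbf{1}[x \in T(a)]$ plays the role of a bipartite incidence between $A$ and $X$, with the first hypothesis controlling row sums (weighted by $\mu$) from below and the second controlling column sums from above; the swap of summations is valid because $X$ is finite.
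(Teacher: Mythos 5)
Your proof is correct and follows essentially the same double-counting argument as the paper: both bound the sum $\sum_{a\in A}\mu(T(a))$ from below by $q\cdot\mu(A)$ via the first hypothesis and from above by $p$ via the second hypothesis after swapping the order of summation.
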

\begin{proof}
  Our assumptions easily imply that
  \begin{align*}
    \mu(A) & = \sum_{a \in A}\mu(a) \leq \frac{1}{q} \cdot \sum_{a \in A} \mu(T(a)) = \frac{1}{q} \cdot \sum_{a \in A} \sum_{x \in T(a)} \mu(x) \\
    & = \frac{1}{q} \cdot \sum_{x \in X} |\{a \in A \colon x \in T(a)\}| \cdot \mu(x) \leq \frac{p}{q} \cdot \sum_{x \in X} \mu(x) = \frac{p}{q}.
    \qedhere
  \end{align*}
\end{proof}

\section{Outline of the argument}

\label{sec:outline}

Fix integers $n$ and $d$ and recall that $G_n$ is the subgraph of
$\Z^d$ induced on the set of lattice points in $[-n,n]^d$. We start
by observing that if the odd boundary vertices of $G_n$ are
occupied, and $x$ is an even vertex which is occupied, there must be
an "outermost" surface in $G_n$ in which the pattern "flips" from
odd/occupied to even/occupied (see Figure~\ref{fig:simulation}).
Formally, this surface is defined in
Section~\ref{sec:locating_cutset} as a set of edges $\Gamma$ having
the following properties:
\begin{enumerate}[(i)]
\item
  $\Gamma$ forms a minimal cutset separating $B_n$ from $x$. In other words, $\Gamma$ partitions $G_n$ into two connected components, the component of the boundary, denoted by $A_0(\Gamma)$, and the component of $x$, denoted by $A_1(\Gamma)$.
\item
  The vertices on the outer boundary of $\Gamma$ are even and vacant and the vertices on the inner boundary of $\Gamma$ are odd and vacant.
\end{enumerate}
Edge cutsets with the above properties play a prominent role in our analysis and we term them $\OMCut$ (for odd minimal edge cutsets).

In what follows, let $x$ be a fixed even vertex of $G_n$ and let $\Omega$ be our ``bad'' event, i.e., the set of all feasible configurations with all odd boundary vertices (i.e., the vertices in the set $\Vo \cap B_n$) and $x$ occupied. For a given configuration $\omega\in\Omega$ we term the above $\Gamma$ as $\Break(\omega)$. A simple consequence of the above properties and the fact that $x$ is even is that $|\Break(\omega)|\ge 2d(2d-1)$ (Proposition~\ref{prop:Gamma-size}).

\begin{figure}[h]
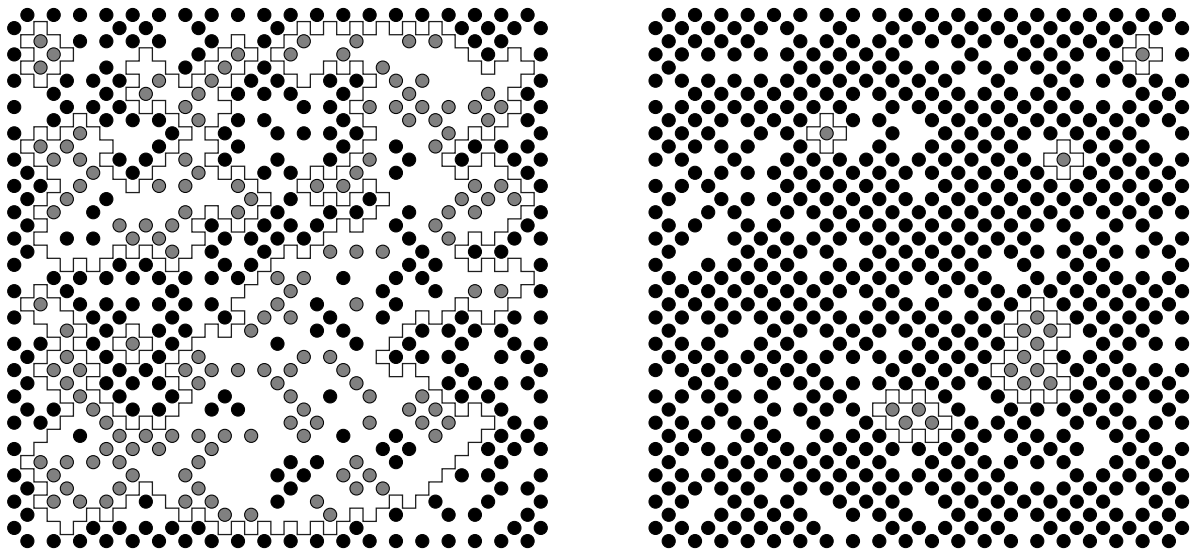

  \centering
  \begin{tabular}{ccc}
    \input{fig-simulation-41x41-1.0} & \hspace{0.2in} & \input{fig-simulation-41x41-5.0}
  \end{tabular}
  \caption{Typical configurations with odd boundary conditions for $n=20$, $d=2$, and activities $\lambda = 1$ and $\lambda = 5$, respectively. Odd and even occupied nodes are represented by black and gray circles, respectively. Even occupied nodes are surrounded by their corresponding $\Break$s. Simulated using coupling from the past~\cite{HN98}.}
  \label{fig:simulation}
\end{figure}

Our next observation is that applying the following "shift
transformation" to $\omega\in\Omega$ yields a feasible configuration
(see Figure~\ref{fig:shift}):
\begin{equation*}
  \Shift_j(\omega)(v) =
  \begin{cases}
    \omega(v + f_j) & \text{if $v \in A_1(\Break(\omega))$}, \\
    \omega(v) & \text{otherwise}.
  \end{cases}
\end{equation*}
A similar transformation was used by Galvin and Kahn~\cite{GaKa}. The key properties of this transformation are:
\begin{enumerate}[(i)]
\item
  $\Shift_j$ is measure preserving, i.e., $\muo(\Shift_j(\omega)) = \muo(\omega)$.
\item
  Denoting $\omega'=\Shift_j(\omega)$, every vertex in the set $\{v\in A_1(\Break(\Gamma)) \colon (v,v+f_j)\in\Break(\omega)\}$ is surrounded by vacant vertices in $\omega'$. Thus an arbitrary modification of $\omega'$ on these vertices yields a feasible configuration.
\end{enumerate}
These two properties and an application of Lemma~\ref{lemma:double-counting} imply that for every fixed $\Gamma\in\OMCut$,
\begin{equation}
  \label{ineq:Break_is_Gamma}
  \muo(\Break(\omega)=\Gamma) \leq (1+\lambda)^{-|\Gamma|/2d}.
\end{equation}

\begin{figure}[h]
  \centering
  \begin{tabular}{ccc}
    \begin{tikzpicture}
  \xdefinecolor{cutsetcolor}{RGB}{30,30,30}

  \draw[line width = 0.015cm, color=cutsetcolor] (8.531,18.156) -- (8.531,18.156) -- (8.094,18.156) -- (8.094,18.594) -- (8.531,18.594) -- (8.531,19.031) -- (8.094,19.031) -- (8.094,19.469) -- (7.656,19.469) -- (7.656,19.906) -- (8.094,19.906) -- (8.094,20.344) -- (8.531,20.344) -- (8.531,20.781) -- (8.969,20.781) -- (8.969,21.219) -- (9.406,21.219) -- (9.406,20.781) -- (9.844,20.781) -- (9.844,21.219) -- (10.281,21.219) -- (10.281,20.781) -- (10.719,20.781) -- (10.719,21.219) -- (11.156,21.219) -- (11.156,21.656) -- (11.594,21.656) -- (11.594,21.219) -- (12.031,21.219) -- (12.031,20.781) -- (12.469,20.781) -- (12.469,20.344) -- (12.906,20.344) -- (12.906,19.906) -- (12.469,19.906) -- (12.469,19.469) -- (12.031,19.469) -- (12.031,19.906) -- (11.594,19.906) -- (11.594,20.344) -- (11.156,20.344) -- (11.156,19.906) -- (10.719,19.906) -- (10.719,19.469) -- (10.281,19.469) -- (10.281,19.031) -- (9.844,19.031) -- (9.844,18.594) -- (9.406,18.594) -- (9.406,18.156) -- (8.969,18.156) -- (8.969,17.719) -- (8.531,17.719) -- (8.531,18.156);
  \foreach \p in {(8.313, 19.688), (8.750, 18.375), (8.750, 19.250), (8.750, 20.125), (9.188, 18.813), (9.188, 19.688), (9.188, 20.563), (9.625, 19.250), (10.063, 19.688), (10.063, 20.563), (10.938, 20.563), (11.375, 21.000), (11.813, 20.563), (12.250, 20.125)} {
    \draw[fill = gray] \p circle (0.175cm);
  }

  \foreach \p in {(7.438, 17.500), (7.438, 18.375), (7.438, 19.250), (7.438, 20.125), (7.438, 21.000), (7.438, 21.875), (7.875, 18.813), (7.875, 20.563), (7.875, 21.438), (8.313, 17.500), (8.313, 21.000), (8.313, 21.875), (8.750, 21.438), (9.188, 17.500), (9.188, 21.875), (9.625, 17.938), (9.625, 21.438), (10.063, 17.500), (10.063, 18.375), (10.063, 21.875), (10.500, 17.938), (10.500, 18.813), (10.500, 21.438), (10.938, 18.375), (10.938, 19.250), (10.938, 21.875), (11.375, 17.938), (11.375, 18.813), (11.375, 19.688), (11.813, 19.250), (11.813, 21.875), (12.250, 21.438), (12.688, 17.500), (12.688, 19.250), (12.688, 21.000), (12.688, 21.875), (13.125, 17.938), (13.125, 19.688), (13.125, 20.563), (13.125, 21.438)} {
    \draw[fill = black] \p circle (0.175cm);
  }

\end{tikzpicture} & \hspace{0.5in} & \begin{tikzpicture}

  \xdefinecolor{cutsetcolor}{RGB}{30,30,30}

  \draw[line width = 0.015cm, color=cutsetcolor] (8.531,18.156) -- (8.531,18.156) -- (8.094,18.156) -- (8.094,18.594) -- (8.531,18.594) -- (8.531,19.031) -- (8.094,19.031) -- (8.094,19.469) -- (7.656,19.469) -- (7.656,19.906) -- (8.094,19.906) -- (8.094,20.344) -- (8.531,20.344) -- (8.531,20.781) -- (8.969,20.781) -- (8.969,21.219) -- (9.406,21.219) -- (9.406,20.781) -- (9.844,20.781) -- (9.844,21.219) -- (10.281,21.219) -- (10.281,20.781) -- (10.719,20.781) -- (10.719,21.219) -- (11.156,21.219) -- (11.156,21.656) -- (11.594,21.656) -- (11.594,21.219) -- (12.031,21.219) -- (12.031,20.781) -- (12.469,20.781) -- (12.469,20.344) -- (12.906,20.344) -- (12.906,19.906) -- (12.469,19.906) -- (12.469,19.469) -- (12.031,19.469) -- (12.031,19.906) -- (11.594,19.906) -- (11.594,20.344) -- (11.156,20.344) -- (11.156,19.906) -- (10.719,19.906) -- (10.719,19.469) -- (10.281,19.469) -- (10.281,19.031) -- (9.844,19.031) -- (9.844,18.594) -- (9.406,18.594) -- (9.406,18.156) -- (8.969,18.156) -- (8.969,17.719) -- (8.531,17.719) -- (8.531,18.156);
  \foreach \p in {} {
    \draw[fill = gray] \p circle (0.175cm);
  }
  
  \foreach \p in {(7.438, 17.500), (7.438, 18.375), (7.438, 19.250), (7.438, 20.125), (7.438, 21.000), (7.438, 21.875), (7.875, 18.813), (7.875, 19.688), (7.875, 20.563), (7.875, 21.438), (8.313, 17.500), (8.313, 18.375), (8.313, 19.250), (8.313, 20.125), (8.313, 21.000), (8.313, 21.875), (8.750, 18.813), (8.750, 19.688), (8.750, 20.563), (8.750, 21.438), (9.188, 17.500), (9.188, 19.250), (9.188, 21.875), (9.625, 17.938), (9.625, 19.688), (9.625, 20.563), (9.625, 21.438), (10.063, 17.500), (10.063, 18.375), (10.063, 21.875), (10.500, 17.938), (10.500, 18.813), (10.500, 20.563), (10.500, 21.438), (10.938, 18.375), (10.938, 19.250), (10.938, 21.000), (10.938, 21.875), (11.375, 17.938), (11.375, 18.813), (11.375, 19.688), (11.375, 20.563), (11.813, 19.250), (11.813, 20.125), (11.813, 21.875), (12.250, 21.438), (12.688, 17.500), (12.688, 19.250), (12.688, 21.000), (12.688, 21.875), (13.125, 17.938), (13.125, 19.688), (13.125, 20.563), (13.125, 21.438)} {
    \draw[fill = black] \p circle (0.175cm);
  }
  
  \foreach \p in {(8.750, 17.938), (9.188, 18.375), (9.188, 21.000), (9.625, 18.813), (10.063, 19.250), (10.063, 21.000), (10.500, 19.688), (10.938, 20.125), (11.375, 21.438), (11.813, 21.000), (12.250, 19.688), (12.250, 20.563), (12.688, 20.125)} {
    \draw[fill = white] \p circle (0.175cm);
  }

\end{tikzpicture}
  \end{tabular}
  \caption{The shift transformation. Odd and even occupied nodes are represented by black and gray circles, respectively. The transformation $T_1$ shifts the occupied nodes inside the cutset $\Break(\omega)$ to the left.
  In the new configuration all vertices having an edge of $\Break(\omega)$ on their right (represented by white circles) necessarily have no occupied neighbors and can thus be set occupied or vacant arbitrarily.}
  \label{fig:shift}
\end{figure}
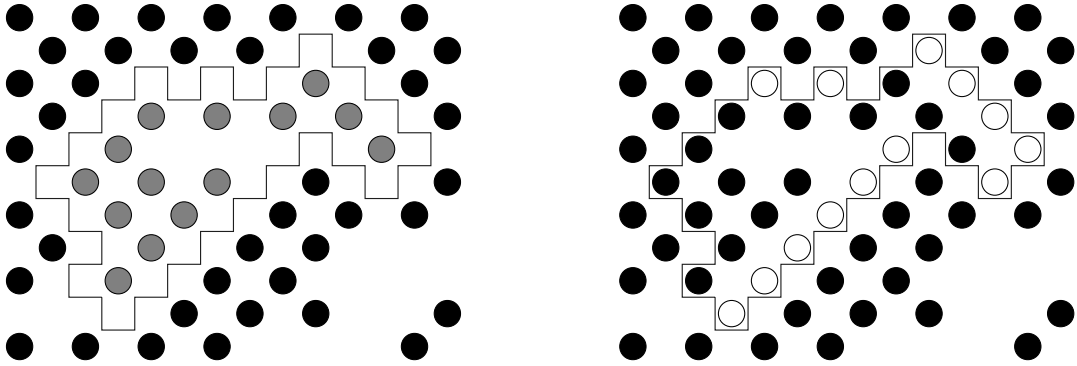

At this point one is tempted to conclude the proof of
Theorem~\ref{thm:finitized} by the union bound over all possible
$\Gamma\in\OMCut$. This approach unfortunately fails since the
number of $\Gamma\in\OMCut$ with a given size turns out to be too
large. Indeed, if we let $\OMCut_L=\{\Gamma\in\OMCut \colon |\Gamma|
= L\}$ then $|\OMCut_L| \geq 2^{\left(\frac{1}{2d}+c_d\right)L}$ for
some $c_d > 0$, at least on a subsequence of $L$s. This can be seen
by counting those cutsets which approximate closely the boundary of
a large cube with sides parallel to the axes of $\Z^d$ (see
Figure~\ref{fig:cubes}), but we neither prove nor use this fact in
our work.

\begin{figure}[h]
  \centering
  \begin{tabular}{ccc}
    \begin{tikzpicture}

  \path[use as bounding box] (-0.250,-0.250) rectangle (7.000,7.000);

  \draw[line width = 0.2, color = black, densely dotted] (1.000,0.500) -- (1.000,0.250) -- (1.250,0.250) -- (1.250,0.500);
  \draw[line width = 0.2, color = black, densely dotted] (1.500,0.500) -- (1.500,0.250) -- (1.750,0.250) -- (1.750,0.500);
  \draw[line width = 0.2, color = black, densely dotted] (2.000,0.500) -- (2.000,0.250) -- (2.250,0.250) -- (2.250,0.500);
  \draw[line width = 0.2, color = black, densely dotted] (2.500,0.500) -- (2.500,0.250) -- (2.750,0.250) -- (2.750,0.500);
  \draw[line width = 0.2, color = black, densely dotted] (3.000,0.500) -- (3.000,0.250) -- (3.250,0.250) -- (3.250,0.500);
  \draw[line width = 0.2, color = black, densely dotted] (3.500,0.500) -- (3.500,0.250) -- (3.750,0.250) -- (3.750,0.500);
  \draw[line width = 0.2, color = black, densely dotted] (4.000,0.500) -- (4.000,0.250) -- (4.250,0.250) -- (4.250,0.500);
  \draw[line width = 0.2, color = black, densely dotted] (4.500,0.500) -- (4.500,0.250) -- (4.750,0.250) -- (4.750,0.500);
  \draw[line width = 0.2, color = black, densely dotted] (5.000,0.500) -- (5.000,0.250) -- (5.250,0.250) -- (5.250,0.500);
  \draw[line width = 0.2, color = black, densely dotted] (5.500,0.500) -- (5.500,0.250) -- (5.750,0.250) -- (5.750,0.500);
  \draw[line width = 0.2, color = black, densely dotted] (1.000,6.250) -- (1.000,6.500) -- (1.250,6.500) -- (1.250,6.250);
  \draw[line width = 0.2, color = black, densely dotted] (1.500,6.250) -- (1.500,6.500) -- (1.750,6.500) -- (1.750,6.250);
  \draw[line width = 0.2, color = black, densely dotted] (2.000,6.250) -- (2.000,6.500) -- (2.250,6.500) -- (2.250,6.250);
  \draw[line width = 0.2, color = black, densely dotted] (2.500,6.250) -- (2.500,6.500) -- (2.750,6.500) -- (2.750,6.250);
  \draw[line width = 0.2, color = black, densely dotted] (3.000,6.250) -- (3.000,6.500) -- (3.250,6.500) -- (3.250,6.250);
  \draw[line width = 0.2, color = black, densely dotted] (3.500,6.250) -- (3.500,6.500) -- (3.750,6.500) -- (3.750,6.250);
  \draw[line width = 0.2, color = black, densely dotted] (4.000,6.250) -- (4.000,6.500) -- (4.250,6.500) -- (4.250,6.250);
  \draw[line width = 0.2, color = black, densely dotted] (4.500,6.250) -- (4.500,6.500) -- (4.750,6.500) -- (4.750,6.250);
  \draw[line width = 0.2, color = black, densely dotted] (5.000,6.250) -- (5.000,6.500) -- (5.250,6.500) -- (5.250,6.250);
  \draw[line width = 0.2, color = black, densely dotted] (5.500,6.250) -- (5.500,6.500) -- (5.750,6.500) -- (5.750,6.250);
  \draw[line width = 0.2, color = black, densely dotted] (0.500,1.000) -- (0.250,1.000) -- (0.250,1.250) -- (0.500,1.250);
  \draw[line width = 0.2, color = black, densely dotted] (0.500,1.500) -- (0.250,1.500) -- (0.250,1.750) -- (0.500,1.750);
  \draw[line width = 0.2, color = black, densely dotted] (0.500,2.000) -- (0.250,2.000) -- (0.250,2.250) -- (0.500,2.250);
  \draw[line width = 0.2, color = black, densely dotted] (0.500,2.500) -- (0.250,2.500) -- (0.250,2.750) -- (0.500,2.750);
  \draw[line width = 0.2, color = black, densely dotted] (0.500,3.000) -- (0.250,3.000) -- (0.250,3.250) -- (0.500,3.250);
  \draw[line width = 0.2, color = black, densely dotted] (0.500,3.500) -- (0.250,3.500) -- (0.250,3.750) -- (0.500,3.750);
  \draw[line width = 0.2, color = black, densely dotted] (0.500,4.000) -- (0.250,4.000) -- (0.250,4.250) -- (0.500,4.250);
  \draw[line width = 0.2, color = black, densely dotted] (0.500,4.500) -- (0.250,4.500) -- (0.250,4.750) -- (0.500,4.750);
  \draw[line width = 0.2, color = black, densely dotted] (0.500,5.000) -- (0.250,5.000) -- (0.250,5.250) -- (0.500,5.250);
  \draw[line width = 0.2, color = black, densely dotted] (0.500,5.500) -- (0.250,5.500) -- (0.250,5.750) -- (0.500,5.750);
  \draw[line width = 0.2, color = black, densely dotted] (6.250,1.000) -- (6.500,1.000) -- (6.500,1.250) -- (6.250,1.250);
  \draw[line width = 0.2, color = black, densely dotted] (6.250,1.500) -- (6.500,1.500) -- (6.500,1.750) -- (6.250,1.750);
  \draw[line width = 0.2, color = black, densely dotted] (6.250,2.000) -- (6.500,2.000) -- (6.500,2.250) -- (6.250,2.250);
  \draw[line width = 0.2, color = black, densely dotted] (6.250,2.500) -- (6.500,2.500) -- (6.500,2.750) -- (6.250,2.750);
  \draw[line width = 0.2, color = black, densely dotted] (6.250,3.000) -- (6.500,3.000) -- (6.500,3.250) -- (6.250,3.250);
  \draw[line width = 0.2, color = black, densely dotted] (6.250,3.500) -- (6.500,3.500) -- (6.500,3.750) -- (6.250,3.750);
  \draw[line width = 0.2, color = black, densely dotted] (6.250,4.000) -- (6.500,4.000) -- (6.500,4.250) -- (6.250,4.250);
  \draw[line width = 0.2, color = black, densely dotted] (6.250,4.500) -- (6.500,4.500) -- (6.500,4.750) -- (6.250,4.750);
  \draw[line width = 0.2, color = black, densely dotted] (6.250,5.000) -- (6.500,5.000) -- (6.500,5.250) -- (6.250,5.250);
  \draw[line width = 0.2, color = black, densely dotted] (6.250,5.500) -- (6.500,5.500) -- (6.500,5.750) -- (6.250,5.750);

  \draw[line width = 0.01cm, color = black] (0.750,0.750) -- (0.750,0.500) -- (1.000,0.500)-- (1.000,0.750) -- (1.250,0.750) -- (1.250,0.500) -- (1.500,0.500)-- (1.500,0.750) -- (1.750,0.750) -- (1.750,0.500) -- (2.000,0.500)-- (2.000,0.750) -- (2.250,0.750) -- (2.250,0.500) -- (2.500,0.500)-- (2.500,0.750) -- (2.750,0.750) -- (2.750,0.500) -- (3.000,0.500)-- (3.000,0.750) -- (3.250,0.750) -- (3.250,0.500) -- (3.500,0.500)-- (3.500,0.750) -- (3.750,0.750) -- (3.750,0.500) -- (4.000,0.500)-- (4.000,0.750) -- (4.250,0.750) -- (4.250,0.500) -- (4.500,0.500)-- (4.500,0.750) -- (4.750,0.750) -- (4.750,0.500) -- (5.000,0.500)-- (5.000,0.750) -- (5.250,0.750) -- (5.250,0.500) -- (5.500,0.500)-- (5.500,0.750) -- (5.750,0.750) -- (5.750,0.500) -- (6.000,0.500) -- (6.000,0.750);

  \draw[line width = 0.01cm, color = black] (0.750,6.000) -- (0.750,6.250) -- (1.000,6.250)-- (1.000,6.000) -- (1.250,6.000) -- (1.250,6.250) -- (1.500,6.250)-- (1.500,6.000) -- (1.750,6.000) -- (1.750,6.250) -- (2.000,6.250)-- (2.000,6.000) -- (2.250,6.000) -- (2.250,6.250) -- (2.500,6.250)-- (2.500,6.000) -- (2.750,6.000) -- (2.750,6.250) -- (3.000,6.250)-- (3.000,6.000) -- (3.250,6.000) -- (3.250,6.250) -- (3.500,6.250)-- (3.500,6.000) -- (3.750,6.000) -- (3.750,6.250) -- (4.000,6.250)-- (4.000,6.000) -- (4.250,6.000) -- (4.250,6.250) -- (4.500,6.250)-- (4.500,6.000) -- (4.750,6.000) -- (4.750,6.250) -- (5.000,6.250)-- (5.000,6.000) -- (5.250,6.000) -- (5.250,6.250) -- (5.500,6.250)-- (5.500,6.000) -- (5.750,6.000) -- (5.750,6.250) -- (6.000,6.250) -- (6.000,6.000);

  \draw[line width = 0.01cm, color = black] (0.750,0.750) -- (0.500,0.750) -- (0.500,1.000)-- (0.750,1.000) -- (0.750,1.250) -- (0.500,1.250) -- (0.500,1.500)-- (0.750,1.500) -- (0.750,1.750) -- (0.500,1.750) -- (0.500,2.000)-- (0.750,2.000) -- (0.750,2.250) -- (0.500,2.250) -- (0.500,2.500)-- (0.750,2.500) -- (0.750,2.750) -- (0.500,2.750) -- (0.500,3.000)-- (0.750,3.000) -- (0.750,3.250) -- (0.500,3.250) -- (0.500,3.500)-- (0.750,3.500) -- (0.750,3.750) -- (0.500,3.750) -- (0.500,4.000)-- (0.750,4.000) -- (0.750,4.250) -- (0.500,4.250) -- (0.500,4.500)-- (0.750,4.500) -- (0.750,4.750) -- (0.500,4.750) -- (0.500,5.000)-- (0.750,5.000) -- (0.750,5.250) -- (0.500,5.250) -- (0.500,5.500)-- (0.750,5.500) -- (0.750,5.750) -- (0.500,5.750) -- (0.500,6.000) -- (0.750,6.000);

  \draw[line width = 0.01cm, color = black] (6.000,0.750) -- (6.250,0.750) -- (6.250,1.000)-- (6.000,1.000) -- (6.000,1.250) -- (6.250,1.250) -- (6.250,1.500)-- (6.000,1.500) -- (6.000,1.750) -- (6.250,1.750) -- (6.250,2.000)-- (6.000,2.000) -- (6.000,2.250) -- (6.250,2.250) -- (6.250,2.500)-- (6.000,2.500) -- (6.000,2.750) -- (6.250,2.750) -- (6.250,3.000)-- (6.000,3.000) -- (6.000,3.250) -- (6.250,3.250) -- (6.250,3.500)-- (6.000,3.500) -- (6.000,3.750) -- (6.250,3.750) -- (6.250,4.000)-- (6.000,4.000) -- (6.000,4.250) -- (6.250,4.250) -- (6.250,4.500)-- (6.000,4.500) -- (6.000,4.750) -- (6.250,4.750) -- (6.250,5.000)-- (6.000,5.000) -- (6.000,5.250) -- (6.250,5.250) -- (6.250,5.500)-- (6.000,5.500) -- (6.000,5.750) -- (6.250,5.750) -- (6.250,6.000) -- (6.000,6.000);

\end{tikzpicture} & \hspace{0.2in} & \begin{tikzpicture}

  \path[use as bounding box] (-0.250,-0.250) rectangle (7.000,7.000);

  \draw[line width = 0.01cm, color = black] (0.750,0.750) -- (0.750,0.500) -- (1.000,0.500)-- (1.000,0.250) -- (1.250,0.250) -- (1.250,0.500) -- (1.500,0.500)-- (1.500,0.750) -- (1.750,0.750) -- (1.750,0.500) -- (2.000,0.500)-- (2.000,0.750) -- (2.250,0.750) -- (2.250,0.500) -- (2.500,0.500)-- (2.500,0.250) -- (2.750,0.250) -- (2.750,0.500) -- (3.000,0.500)-- (3.000,0.750) -- (3.250,0.750) -- (3.250,0.500) -- (3.500,0.500)-- (3.500,0.250) -- (3.750,0.250) -- (3.750,0.500) -- (4.000,0.500)-- (4.000,0.750) -- (4.250,0.750) -- (4.250,0.500) -- (4.500,0.500)-- (4.500,0.250) -- (4.750,0.250) -- (4.750,0.500) -- (5.000,0.500)-- (5.000,0.250) -- (5.250,0.250) -- (5.250,0.500) -- (5.500,0.500)-- (5.500,0.250) -- (5.750,0.250) -- (5.750,0.500) -- (6.000,0.500) -- (6.000,0.750);
  \draw[line width = 0.2, color = black, densely dotted] (4.750,0.250) -- (4.750,0.000) -- (5.000,0.000) -- (5.000,0.250);
  \draw[line width = 0.2, color = black, densely dotted] (5.250,0.250) -- (5.250,0.000) -- (5.500,0.000) -- (5.500,0.250);
  \draw[line width = 0.01cm, color = black] (0.750,6.000) -- (0.750,6.250) -- (1.000,6.250)-- (1.000,6.000) -- (1.250,6.000) -- (1.250,6.250) -- (1.500,6.250)-- (1.500,6.000) -- (1.750,6.000) -- (1.750,6.250) -- (2.000,6.250)-- (2.000,6.000) -- (2.250,6.000) -- (2.250,6.250) -- (2.500,6.250)-- (2.500,6.500) -- (2.750,6.500) -- (2.750,6.250) -- (3.000,6.250)-- (3.000,6.000) -- (3.250,6.000) -- (3.250,6.250) -- (3.500,6.250)-- (3.500,6.500) -- (3.750,6.500) -- (3.750,6.250) -- (4.000,6.250)-- (4.000,6.000) -- (4.250,6.000) -- (4.250,6.250) -- (4.500,6.250)-- (4.500,6.000) -- (4.750,6.000) -- (4.750,6.250) -- (5.000,6.250)-- (5.000,6.500) -- (5.250,6.500) -- (5.250,6.250) -- (5.500,6.250)-- (5.500,6.500) -- (5.750,6.500) -- (5.750,6.250) -- (6.000,6.250) -- (6.000,6.000);
  \draw[line width = 0.2, color = black, densely dotted] (5.250,6.500) -- (5.250,6.750) -- (5.500,6.750) -- (5.500,6.500);
  \draw[line width = 0.01cm, color = black] (0.750,0.750) -- (0.500,0.750) -- (0.500,1.000)-- (0.250,1.000) -- (0.250,1.250) -- (0.500,1.250) -- (0.500,1.500)-- (0.750,1.500) -- (0.750,1.750) -- (0.500,1.750) -- (0.500,2.000)-- (0.750,2.000) -- (0.750,2.250) -- (0.500,2.250) -- (0.500,2.500)-- (0.250,2.500) -- (0.250,2.750) -- (0.500,2.750) -- (0.500,3.000)-- (0.750,3.000) -- (0.750,3.250) -- (0.500,3.250) -- (0.500,3.500)-- (0.250,3.500) -- (0.250,3.750) -- (0.500,3.750) -- (0.500,4.000)-- (0.750,4.000) -- (0.750,4.250) -- (0.500,4.250) -- (0.500,4.500)-- (0.250,4.500) -- (0.250,4.750) -- (0.500,4.750) -- (0.500,5.000)-- (0.250,5.000) -- (0.250,5.250) -- (0.500,5.250) -- (0.500,5.500)-- (0.750,5.500) -- (0.750,5.750) -- (0.500,5.750) -- (0.500,6.000) -- (0.750,6.000);
  \draw[line width = 0.2, color = black, densely dotted] (0.250,4.750) -- (0.000,4.750) -- (0.000,5.000) -- (0.250,5.000);
  \draw[line width = 0.01cm, color = black] (6.000,0.750) -- (6.250,0.750) -- (6.250,1.000)-- (6.500,1.000) -- (6.500,1.250) -- (6.250,1.250) -- (6.250,1.500)-- (6.500,1.500) -- (6.500,1.750) -- (6.250,1.750) -- (6.250,2.000)-- (6.500,2.000) -- (6.500,2.250) -- (6.250,2.250) -- (6.250,2.500)-- (6.000,2.500) -- (6.000,2.750) -- (6.250,2.750) -- (6.250,3.000)-- (6.500,3.000) -- (6.500,3.250) -- (6.250,3.250) -- (6.250,3.500)-- (6.000,3.500) -- (6.000,3.750) -- (6.250,3.750) -- (6.250,4.000)-- (6.500,4.000) -- (6.500,4.250) -- (6.250,4.250) -- (6.250,4.500)-- (6.500,4.500) -- (6.500,4.750) -- (6.250,4.750) -- (6.250,5.000)-- (6.000,5.000) -- (6.000,5.250) -- (6.250,5.250) -- (6.250,5.500)-- (6.000,5.500) -- (6.000,5.750) -- (6.250,5.750) -- (6.250,6.000) -- (6.000,6.000);
  \draw[line width = 0.2, color = black, densely dotted] (6.500,1.250) -- (6.750,1.250) -- (6.750,1.500) -- (6.500,1.500);
  \draw[line width = 0.2, color = black, densely dotted] (6.500,1.750) -- (6.750,1.750) -- (6.750,2.000) -- (6.500,2.000);
  \draw[line width = 0.2, color = black, densely dotted] (6.500,4.250) -- (6.750,4.250) -- (6.750,4.500) -- (6.500,4.500);

\end{tikzpicture}
  \end{tabular}
  \caption{Odd cutsets approximating the boundary of a cube. On the left, every second boundary vertex can be ``pushed out'' independently of other vertices, yielding $2^{\left(\frac{1}{2d}-o_d(1)\right)L}$ odd cutsets with $L$ edges, as $L\to\infty$. On the right, for every odd cutset obtained in such way, we have
the additional option of independently ``pushing out'' vertices all
of whose $2d-2$ neighbors were ``pushed out'' in the first stage.
Combining these two stages shows that as $L\to\infty$ (along a
subsequence) there are at least
$2^{\left(\frac{1}{2d}+c_d-o_d(1)\right)L}$ odd cutsets with $L$
edges for some $c_d>0$.}
  \label{fig:cubes}
\end{figure}
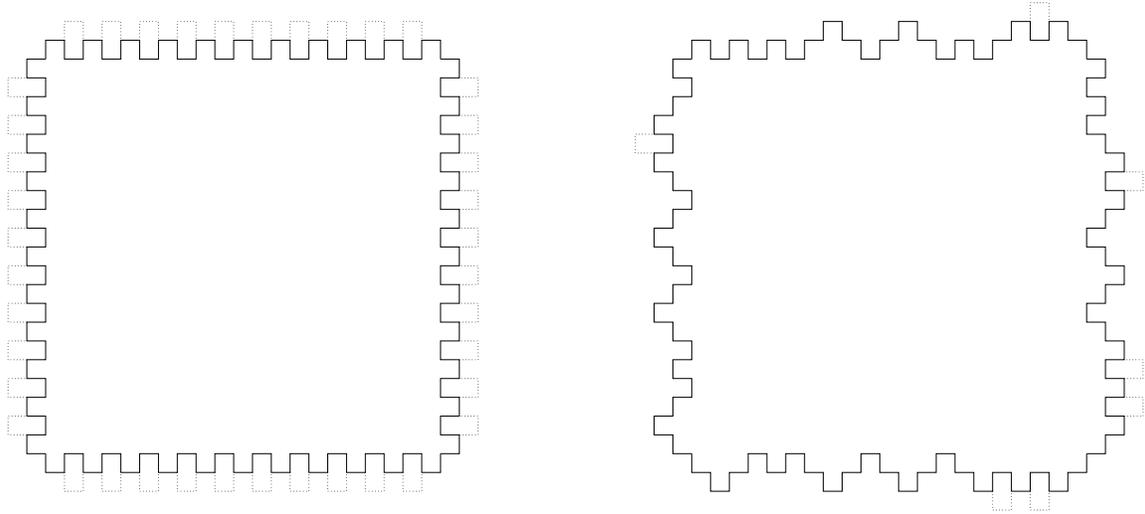

Instead, we introduce a (very coarse) measure of regularity on $\OMCut$ (the quantity $R_\Gamma(E_1(\Gamma))$ defined in Section~\ref{sec:counting-odd-cutsets}) and partition all cutsets into "regular" and "irregular" ones. Theorem~\ref{thm:OMCutMR} shows that the set of irregular cutsets is fairly small and hence the union bound (with the above estimate~\eqref{ineq:Break_is_Gamma}) is sufficient to bound the probability that $\Break(\omega)$ is irregular (see Theorem~\ref{thm:Omega1}).

In order to bound the probability that $\Break(\omega)$ is regular, we partition the set of all regular cutsets into a (relatively) small number of classes and give an estimate on the probability of $\Break(\omega)$ belonging to each such class that is strong enough to facilitate a union bound over all classes. We obtain this partition by exploiting a certain concentration of shape phenomenon. It is shown in Theorems~\ref{thm:OMCut-approx} and~\ref{thm:OMCuteps-approx} that, informally, for sufficiently large $d$ there is a set of "shapes" which contain a good "approximation" to every cutset in $\OMCut_L$, with the number of such shapes significantly smaller than $|\OMCut_L|$. Adapting the shift transformation to this notion of approximation (see Section~\ref{sec:T2}), we are able to bound the probability that $\Break(\omega)$ belongs to the set of cutsets with a given shape. The bound we get is only strong enough for cutsets which are regular (see Theorems~\ref{thm:Omega2} and \ref{thm:Omega2-new}), necessitating the separate treatment of irregular cutsets given above.

\section{The argument}

\subsection{Locating the cutset}

\label{sec:locating_cutset}

Let $d, n \geq 2$ and let $x$ be an arbitrary even vertex of $G_n$. Let $\Fo_n$ be the set of all feasible configurations with $\omega(v) = 1$ for all $v \in B_n \cap \Vo$ and let $\Omega_n$ denote the ``bad'' event, i.e., the set of all $\omega \in \Fo_n$ with $\omega(x) = 1$. For the sake of clarity of the presentation, most of the time we will drop the subscript $n$ from $G_n$, $B_n$, $\muno$, $\Fo_n$, and $\Omega_n$.

Fix an arbitrary ``bad'' configuration $\omega \in \Omega$. Observe that all odd boundary vertices are occupied and all even boundary vertices are vacant, whereas $x$ is an even vertex that is occupied and all its neighbors are odd and vacant. We are going to associate with $\omega$ an edge cutset in $G$ that will mark the outermost break in the above boundary pattern, i.e., the outermost contour separating vertices that are odd and occupied from vertices that are even and occupied. More precisely, we let $X$ be the set of all vacant odd vertices, i.e., $X = \Vo \cap \Vva$, let $A_0'$ be the connected component of $B$ in the graph $G \setminus X$ (note that $B$ is connected in $G$ and disjoint from $X$), and note that $x \not\in A_0'$ as $N(x) \subseteq X$. Finally, let $A_1$ be the connected component of $x$ in the graph $G \setminus A_0'$ and let $\Gamma$ be the set of all edges with one endpoint in $A_0'$ and one endpoint in $A_1$. By definition, every path from $x$ to $B$ must use an edge of $\Gamma$ and no strict subset $\Gamma' \subseteq \Gamma$ has this property. In other words, we may say that $\Gamma$ is a {\em minimal edge cutset} separating $x$ from $B$. To summarize, we have defined a mapping $\Break \colon \Omega \to \cP(E(G))$ that assigns to each configuration in $\Omega$ an edge cutset separating $x$ from $B$. Below, we establish some crucial properties of this cutset.

Let $\Gamma = \Break(\omega)$. For an arbitrary vertex $v \in V(G)$, let $\PG(v)$ be the number of edges of $\Gamma$ that are incident to $v$ and let
\[
E_0 = \{v \in A_0' \colon \PG(v) > 0\} = \{v \not\in A_1 \colon \PG(v) > 0\} \quad \text{and} \quad E_1 = \{v \in A_1 \colon \PG(v) > 0\}.
\]

\begin{prop}
  \label{prop:Gamma}
  $E_0 \subseteq \Ve \cap \Vva$ and $E_1 \subseteq \Vo \cap \Vva$.
\end{prop}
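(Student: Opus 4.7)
I would split the two containments and begin with $E_0 \subseteq \Ve \cap \Vva$. Fix $v \in E_0$ together with an edge $vw \in \Gamma$, so $v \in A_0'$ and $w \in A_1$. The first step is to show that $w \in X$. Indeed, if $w \notin X$, then the edge $vw$ lies entirely in $G \setminus X$; since $v \in A_0'$ is the connected component of $B$ in $G \setminus X$, this would force $w \in A_0'$, contradicting $w \in A_1$ (the two sets are disjoint by the definition of $A_1$ as a connected component of $G \setminus A_0'$). Hence $w \in X = \Vo \cap \Vva$, so $w$ is odd, and since $\Z^d$ is bipartite, $v$ must be even.

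Next I would show that $v$ is vacant, by contradiction. Suppose $\omega(v) = 1$. Being even and occupied, feasibility forces every neighbor of $v$ (all of which are odd) to be vacant, so $N(v) \subseteq X$. At this point I would rule out $v \in B$: any $u \in B \cap \Ve$ has at least one neighbor in $B \cap \Vo$ (this is where $d \geq 2$ enters, combined with the description of $B$ as the outer shell of $[-n,n]^d$), and such a neighbor is occupied by the definition of $\Fo$, which by feasibility forces $u$ to be vacant — contradiction with our assumption on $v$. Hence $v \notin B$. But then since $v \in A_0'$, there must exist a path of positive length in $G \setminus X$ joining $v$ to some vertex of $B$, and its penultimate vertex is a neighbor of $v$ lying outside $X$, contradicting $N(v) \subseteq X$. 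Therefore $v$ is vacant.

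For $E_1$, pick $v \in E_1$ and an edge $vw \in \Gamma$, where now $w \in A_0'$. The first step above, applied with $w$ playing the role of our earlier $v$, immediately yields $v \in X$, so $v$ is odd and vacant, as required. The main obstacle throughout is the vacancy step: to derive the contradiction one needs to jointly exploit the boundary condition defining $\Fo$ and the local feasibility of $\omega$, which together are exactly what is needed to prevent any occupied even vertex from sitting inside $A_0'$; the parity assertions, by contrast, follow almost directly from the way $A_0'$ and $A_1$ are constructed.
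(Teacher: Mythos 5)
Your proof is correct and uses essentially the same ideas as the paper: the same connected-component argument pins down parity (a vertex in $E_1$ must lie in $X$, else it would merge with $A_0'$), and the same path-in-$A_0'$ argument produces an occupied odd neighbor to force vacancy of vertices in $E_0$. The paper streamlines the vacancy step by running the path directly to $B \cap \Vo$ inside $A_0'$ and reading off the first step, which avoids your case split on whether $v \in B$ and makes the argument direct rather than by contradiction, but the underlying mechanism is identical.
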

\begin{proof}
  Fix some $v \in E_1$ and let $w$ be an arbitrary vertex with $\{v, w\} \in \Gamma$; at least one such vertex exists since $\PG(v) > 0$. Clearly, $w \in A_0'$ and hence $v \in X$ or otherwise $v$ would also belong to $A_0'$ (recall that $A_0'$ is a maximal connected subset of vertices in $G \setminus X$). It follows that $E_1 \subseteq X = \Vo \cap \Vva$. Since each vertex in $E_0$ is adjacent to a vertex in $E_1$, it immediately follows that $E_0 \subseteq \Ve$. Finally, fix an arbitrary $w \in E_0$. It remains to be shown that $w$ is vacant. Since $w \in A_0'$, then there is a path in $A_0'$ connecting $w$ to $B \cap \Vo$. The immediate neighbor of $w$ on this path is an odd vertex that does not belong to $X$ and hence it is occupied. Therefore, $w$ must be vacant.
\end{proof}

Before we proceed, we have to take a little detour and introduce some terminology that will help us deal with cutsets arising in the procedure described above. Following~\cite{Pe}, we let $\MCut$ be the set of all {\em minimal edge cutsets} separating $x$ and $B$, i.e., the set of all $\Gamma \subseteq E(G)$ such that any path from $x$ to $B$ must cross an edge of $\Gamma$ and no strict subset $\Gamma' \subseteq \Gamma$ shares this property. For every $\Gamma \in \MCut$, let $A_0(\Gamma)$ and $A_1(\Gamma)$ denote the connected components of $B$ and $x$ in $G \setminus \Gamma$, respectively. By minimality of $\Gamma$, every edge of $\Gamma$ must have an endpoint in $A_0(\Gamma)$ and an endpoint in $A_1(\Gamma)$. It follows that $A_0(\Gamma)$ and $A_1(\Gamma)$ form a partition of the vertex set of $G$. For every vertex $v \in V(G)$, let $\PG(v)$ be the number of edges in $\Gamma$ that are incident to $v$, and let
\[
E_0(\Gamma) = \{v \in A_0(\Gamma) \colon \PG(v) > 0\} \quad \text{and} \quad E_1(\Gamma) = \{v \in A_1(\Gamma) \colon \PG(v) > 0\}.
\]
Finally, we define $\OMCut$, the set of {\em odd minimal edge cutsets} (or simply {\em odd cutsets}) to be the set of all $\Gamma \in \MCut$ that satisfy $E_1(\Gamma) \subseteq \Vo$.

Since the new definitions of $A_1$, $E_0$, and $E_1$ coincide with the old ones (more precisely, if $\Gamma = \Break(\omega)$ for some $\omega \in \Omega$ and the procedure described at the beginning of this section defines sets $A_1$, $E_0$, and $E_1$, then $A_1 = A_1(\Gamma)$, $E_0 = E_0(\Gamma)$, and $E_1 = E_1(\Gamma)$), then it is easy to see that the mapping $\Break$ assigns to each configuration in $\Omega$ an odd cutset $\Gamma = \Break(\omega) \in \OMCut$ satisfying Proposition~\ref{prop:Gamma}.

We need to establish one more crucial property of the mapping $\Break$. To this end, with $\Gamma \in \OMCut$ fixed, we say that a configuration $\omega'$ is an {\em interior modification} of another configuration $\omega$ if they agree everywhere but at most on the set $A_1(\Gamma) \setminus E_1(\Gamma)$, i.e., if $\omega(v) = \omega'(v)$ for all $v \not\in A_1(\Gamma) \setminus E_1(\Gamma)$. A moment of thought assures us that the following is true about the map $\Break$.

\begin{prop}
  \label{prop:interior-mod}
  Let $\Gamma \in \OMCut$ and assume that $\Gamma = \Break(\omega)$ for some configuration $\omega \in \Omega$. Then $\Gamma = \Break(\omega')$ for every $\omega'$ that is an interior modification of $\omega$.
\end{prop}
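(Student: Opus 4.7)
The plan is to trace through the construction of $\Break$ applied to $\omega'$ and verify that it recovers the same cutset $\Gamma$. I start by setting $X' := \Vo \cap \{v\colon \omega'(v) = 0\}$ and noting that the whole construction depends on the configuration only through this set. Since $\omega'$ coincides with $\omega$ everywhere outside $A_1(\Gamma)\setminus E_1(\Gamma)$, the two sets $X$ and $X'$ agree on $A_0(\Gamma)\cup E_1(\Gamma)$; in particular $X'\cap A_0(\Gamma) = X\cap A_0(\Gamma) = \emptyset$, and, by Proposition~\ref{prop:Gamma}, $E_1(\Gamma)\subseteq \Vo\cap\Vva\subseteq X$ forces $E_1(\Gamma)\subseteq X'$ as well.

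The key geometric step is the observation that the outer boundary of $A_0(\Gamma)$ in $G$ --- i.e.\ the set of vertices not in $A_0(\Gamma)$ but adjacent to it --- lies entirely inside $E_1(\Gamma)$. This is immediate: any such vertex must lie in $A_1(\Gamma)$ and be incident to an edge of $\Gamma$, and therefore has $\PG(v)>0$. Combined with $E_1(\Gamma)\subseteq X'$, this shows that $A_0(\Gamma)$ is a connected subset of $G\setminus X'$ containing $B$ whose outer boundary is entirely blocked by $X'$. Together with the easy verification that $B\subseteq A_0(\Gamma)$ (which uses only $\omega\in\Omega$: odd boundary vertices are occupied so are not in $X$, and even boundary vertices are not in $\Vo$), this identifies $A_0(\Gamma)$ with the connected component of $B$ in $G\setminus X'$.

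From here the remaining steps are essentially forced. The component of $x$ in $G\setminus A_0(\Gamma)$ must be $A_1(\Gamma)$, since by the preceding discussion of $\MCut$, $A_0(\Gamma)$ and $A_1(\Gamma)$ partition $V(G)$ and the latter is connected. The edges of the new cutset therefore coincide with those edges of $G$ that have one endpoint in $A_0(\Gamma)$ and one in $A_1(\Gamma)$, which is exactly $\Gamma$. Hence $\Break(\omega')=\Gamma$. I do not foresee any genuine obstacle here: the whole proof rests on the slogan that $\Break(\omega)$ depends on $\omega$ only through $\omega|_{A_0(\Gamma)\cup E_1(\Gamma)}$, and the notion of interior modification was designed precisely so as to leave this restriction untouched.
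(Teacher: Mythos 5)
There is a genuine gap: you conflate $A_0(\Gamma)$, which by the partition property of minimal cutsets equals $V(G)\setminus A_1(\Gamma)$, with the set $A_0'$ from the definition of $\Break$ (the connected component of $B$ in $G\setminus X$). These need not coincide. The set $A_0'$ is indeed disjoint from $X$ by construction, but $A_0(\Gamma)$ is the \emph{entire} complement of $A_1(\Gamma)$, and it may contain vertices of $X$. Concretely, if $\omega$ has another occupied even vertex $y$ far from $x$, surrounded by its own shell of vacant odd neighbours, then the hole around $y$ lies in $A_0(\Gamma)\setminus A_0'$ and contains vacant odd vertices, so $X\cap A_0(\Gamma)\neq\emptyset$. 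Your claim ``$X'\cap A_0(\Gamma) = X\cap A_0(\Gamma) = \emptyset$'' is therefore false in general, and with it the assertion that $A_0(\Gamma)$ is a subset of $G\setminus X'$ whose boundary is blocked by $X'$. (Switching to $A_0'$ does not immediately repair the argument either: the outer boundary of $A_0'$ is not confined to $E_1(\Gamma)$, since it also includes the boundaries of the other holes of $A_0'$.)

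The paper's proof sidesteps this by not characterizing the shape of any set. It shows directly that the component of $B$ in $G\setminus X$ equals the component of $B$ in $G\setminus X'$: since $X$ and $X'$ disagree only inside $A_1(\Gamma)\setminus E_1(\Gamma)$, and since any path from $B$ into $A_1(\Gamma)$ must first pass through a vertex of $E_1(\Gamma)\subseteq X\cap X'$, a path from $B$ that avoids $X$ (resp.\ $X'$) never reaches the region where the two sets differ, hence avoids $X'$ (resp.\ $X$) as well. From the equality of the components, $\Break(\omega)=\Break(\omega')$ follows because $\Break$ is a function of that component alone. If you want to keep your ``boundary blocked'' framing, you would need to work with $A_0'$ rather than $A_0(\Gamma)$ and observe that the outer boundary of $A_0'$ lies in $A_0(\Gamma)\cup E_1(\Gamma)$ (the agreement region for $X$ and $X'$), which is a weaker and correct statement than the one you wrote.
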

\begin{proof}
  Let $X = \{v \in \Vo \colon \omega(v) = 0\}$ and let $X' = \{v \in \Vo \colon \omega'(v) = 0\}$. Since $\omega'$ is an interior modification of $\omega$, it follows that $X' \setminus A_1(\Gamma) = X \setminus A_1(\Gamma)$ and Proposition~\ref{prop:Gamma} implies that $X, X' \supseteq E_1(\Gamma)$. By definition, every path from $B$ to $A_1(\Gamma)$ has a vertex in $E_1(\Gamma)$. It follows that the connected components of $B$ in $G \setminus X$ and $G \setminus X'$ are identical and hence $\Break(\omega) = \Break(\omega')$. To see this, recall that $\Break(\omega)$ depends solely on the connected component of $B$ in the graph $G \setminus \{v \in \Vo \colon \omega(v) = 0\}$, which we denoted by $A'_0$ at the beginning of this section.
\end{proof}

\subsection{Minimal edge cutsets}

In this section, we give basic definitions and properties of minimal
edge cutsets that will be used throughout the paper. The following
property of the sets $E_0(\Gamma)$ and $E_1(\Gamma)$, which is a
direct consequence of the results proved by Tim{\'a}r~\cite{Ti},
will be crucial in our considerations.

\begin{prop}
  \label{prop:Edelta-connected}
  For every $\Gamma \in \MCut$, the sets $E_0(\Gamma)$ and $E_1(\Gamma)$ are connected in the graph $G^2$.
\end{prop}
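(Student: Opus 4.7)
The plan is to invoke the main result of Tim\'ar's paper~\cite{Ti} directly. Tim\'ar studies minimal edge cutsets in lattice-like graphs and proves that the two vertex boundaries of such a cutset are each connected in an appropriate ``thickened'' version of the ambient graph. In the case of $\Z^d$ with the standard nearest-neighbor adjacency, this thickened graph is exactly $G^2$, so the proposition is nothing but Tim\'ar's theorem applied in turn to $E_0(\Gamma)$ and $E_1(\Gamma)$.

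If I wanted to indicate the structural content of the argument rather than just cite it, I would proceed as follows. Fix $u, v \in E_1(\Gamma)$. Since $\Gamma \in \MCut$, the set $A_1(\Gamma)$ is connected in $G$, so there is a $G$-path $P$ from $u$ to $v$ inside $A_1(\Gamma)$. Walking along $P$, the goal is to replace any sub-path of $P$ that passes through $A_1(\Gamma) \setminus E_1(\Gamma)$ (i.e., through vertices strictly in the interior of $A_1(\Gamma)$) by a sequence of hops that stay in $E_1(\Gamma)$ and whose consecutive vertices are at $G$-distance at most $2$. The replacement is driven by the minimality of $\Gamma$: for every edge $e \in \Gamma$, the set $\Gamma \setminus \{e\}$ is \emph{not} a cutset, and so there is a $G$-path from $B$ to $x$ meeting $\Gamma$ only at $e$. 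These ``bypass paths'' trace along the two sides of the cutset and can be stitched together into the required $G^2$-walks in $E_1(\Gamma)$; the argument for $E_0(\Gamma)$ is symmetric, exchanging the roles of $A_0$ and $A_1$.

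The main obstacle in carrying out this rerouting from scratch is verifying two things simultaneously: that each step of the bypass remains inside the vertex boundary $E_1(\Gamma)$ and does not wander into the interior of $A_1(\Gamma)$, and that successive stepping stones of the bypass are always at $G$-distance at most $2$ from one another. This is exactly the geometric bookkeeping handled by Tim\'ar, and accordingly my proof proposal is simply to quote~\cite{Ti} rather than reproduce the argument, which matches the way this fact is used throughout~\cite{Pe}.
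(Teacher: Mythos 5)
Your proposal matches the paper's approach: the paper also proves this proposition by citing Tim\'ar's theorem. However, the way the paper invokes~\cite{Ti} is a bit more specific than you indicate, and two technical points that the paper handles are glossed over in your write-up. First, Tim\'ar's result (\cite[Theorem~2]{Ti}) is not stated about ``the two vertex boundaries of a minimal edge cutset'' directly; it says that for a connected set $C$ and a vertex $v \notin C$, the \emph{visible} outer boundary $\Bvis(v,C)$ (the set of neighbors of $C$ reachable from $v$ in the complement of $C$) is connected in $G^2$. The paper then identifies $E_1(\Gamma) = \Bvis_G(x, A_0(\Gamma))$ and $E_0(\Gamma) = \Bvis_G(b, A_1(\Gamma))$ for $b \in B$, which is how the statement about cutset boundaries is obtained. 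Second, Tim\'ar's theorem is about $\Z^d$, while here we work in the finite box $G_n$; the paper justifies the transfer by observing that, since $B \subseteq A_0(\Gamma)$ and $B \cap A_1(\Gamma) = \emptyset$, the visible boundary computed inside $G_n$ agrees with the one computed in $\Z^d$, so that the infinite-graph theorem applies. Your ``structural content'' sketch (rerouting a path along bypasses furnished by minimality of $\Gamma$) is not really how Tim\'ar's argument proceeds, but since you explicitly decline to carry it out and simply cite~\cite{Ti}, that is immaterial; just be aware that the finite-vs-infinite issue and the reformulation via $\Bvis$ are genuine steps that a complete write-up must include.
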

\begin{proof}
  Following~\cite{Ti}, for a graph $H$, any $C \subseteq V(H)$, and a $v \in V(H)$, define the outer boundary of $C$ visible from $v$, denoted $\Bvis_H(v, C)$, to be the set of all $y \in N_H(C)$ that are connected to $v$ in the graph $H \setminus C$. It follows from~\cite[Theorem~2]{Ti} that for any connected subset $C \subseteq V(G)$ such that $B \subseteq C$ or $B \cap C = \emptyset$ and any $v \in V(G) \setminus C$, the set $\Bvis_G(v, C)$ is connected in $G^2$. The reason for the assumption that $C$ either contains $B$ or is disjoint from $B$ is that such set $C$, when viewed as a subset in the graph $\Z^d$, satisfies $\Bvis_G(v, C) = \Bvis_{\Z^d}(v, C)$ for every $v \in V(G) \setminus C \subseteq \Z^d$. To conclude, we simply note that the sets $A_0(\Gamma)$ and $A_1(\Gamma)$ are connected, $B \subseteq A_0(\Gamma)$, $B \cap A_1(\Gamma) = \emptyset$, $E_1(\Gamma) = \Bvis_G(x, A_0(\Gamma))$, and $E_0(\Gamma) = \Bvis_G(b, A_1(\Gamma))$ for every $b \in B$.
\end{proof}

For every $j \in [2d]$, we let
\[
E_{1,j}(\Gamma) = \{v \in E_1(\Gamma) \colon \{v, v+f_j\} \in \Gamma \}.
\]
We also define
\[
E_{1,e}(\Gamma) = \{v \in E_1(\Gamma) \colon P_\Gamma(v) \geq 2d - \sqrt{d}\}
\]
and
\[
E_{1,j,x}(\Gamma) = \{v \in E_{1,e}(\Gamma) \colon v + f_j \in A_1(\Gamma)\} = \{v \in E_{1,e}(\Gamma) \colon \{v, v+f_j\} \not\in \Gamma\}.
\]
The letter e stands for {\em exposed} as vertices in $E_{1,e}(\Gamma)$ are exposed to $\Gamma$ from many directions. The sets $E_{1,j}$, $E_{1,e}$, and $E_{1,j,x}$ will play an important role in our considerations. We also let
\[
\Gamma_r = \{vw \in \Gamma \colon v \in E_1(\Gamma) \setminus E_{1,e}(\Gamma) \},
\]
for every $j \in [2d]$, let
\[
\Gamma^j = \{vw \in \Gamma \colon v \in E_1(\Gamma) \text{ and } w = v + f_j\} \quad \text{and} \quad \Gamma_r^j = \Gamma_r \cap \Gamma^j.
\]
Additionally, note that for each $j \in [2d]$, we have that $|E_{1,j}| = |\Gamma^j|$, and $|E_{1,j} \setminus E_{1,e}| = |\Gamma_r^j|$. We will repeatedly use the following simple facts.

\begin{prop}
  \label{prop:NAdelta}
  Let $\Gamma \in \OMCut$, let $\delta \in \{0, 1\}$, and let $v \in E_\delta(\Gamma)$. For every $j \in [2d]$, if $v + f_j \in A_\delta(\Gamma)$ (or, equivalently, $\{v, v+f_j\} \not\in \Gamma$), then $N(v + f_j) \subseteq A_\delta(\Gamma)$.
\end{prop}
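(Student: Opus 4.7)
The plan is to derive the conclusion by a short parity argument, leveraging the defining property of $\OMCut$ together with the basic structural fact that edges of a minimal cutset $\Gamma$ are precisely the edges between $A_0(\Gamma)$ and $A_1(\Gamma)$.

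First, I would record the auxiliary observation that for $\Gamma \in \OMCut$ one has not only $E_1(\Gamma) \subseteq \Vo$ (by definition) but also $E_0(\Gamma) \subseteq \Ve$. Indeed, any $w \in E_0(\Gamma)$ has $\PG(w) > 0$, so it is adjacent (through an edge of $\Gamma$) to some $u \in A_1(\Gamma)$ with $\PG(u) > 0$, hence $u \in E_1(\Gamma) \subseteq \Vo$; as $\Z^d$ is bipartite, $w \in \Ve$. Thus, for both $\delta \in \{0,1\}$, we have $E_\delta(\Gamma) \subseteq \Vo$ if $\delta = 1$ and $E_\delta(\Gamma) \subseteq \Ve$ if $\delta = 0$.

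Next, fix $v \in E_\delta(\Gamma)$ and $j \in [2d]$ with $v + f_j \in A_\delta(\Gamma)$, and suppose towards a contradiction that some neighbor $w$ of $v + f_j$ does not lie in $A_\delta(\Gamma)$. Since $A_0(\Gamma)$ and $A_1(\Gamma)$ partition $V(G)$ and, by minimality of $\Gamma$, every edge of $\Gamma$ has one endpoint in each of $A_0(\Gamma)$ and $A_1(\Gamma)$, we conclude $w \in A_{1-\delta}(\Gamma)$ and $\{v + f_j, w\} \in \Gamma$. In particular $\PG(v + f_j) > 0$, so $v + f_j \in E_\delta(\Gamma)$.

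Finally, I would derive the contradiction from parity: by the first step, $v + f_j$ and $v$ must have the same parity (both odd when $\delta = 1$, both even when $\delta = 0$); but $\{v, v + f_j\}$ is an edge of $\Z^d$, so they have opposite parities. This is the whole argument; there is no real obstacle here, as the proposition is essentially a bookkeeping consequence of the $\OMCut$ definition and the bipartiteness of $\Z^d$.
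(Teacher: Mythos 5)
Your proof is correct and is essentially the same parity argument as the paper's: both rely on $E_1(\Gamma)\subseteq\Vo$ (by definition of $\OMCut$), the consequence $E_0(\Gamma)\subseteq\Ve$, and the bipartiteness of $\Z^d$. The only difference is cosmetic — you argue by contradiction that $v+f_j$ would have to lie in $E_\delta(\Gamma)$ (wrong parity), while the paper argues directly that the neighbors of $v+f_j$ cannot lie in $E_{1-\delta}(\Gamma)$ (wrong parity); these are two faces of the same observation.
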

\begin{proof}
  Assume first that $\delta = 1$. Let $v \in E_1(\Gamma)$ and let $j \in [2d]$ be such that $v + f_j \in A_1(\Gamma)$. Since $E_1(\Gamma) \subseteq \Vo$, then $v + f_j \in \Ve$. Since $N(w) \subseteq A_1(\Gamma) \cup E_0(\Gamma)$ for every $w \in A_1(\Gamma)$, $N(v + f_j) \subseteq \Vo$, and $E_0(\Gamma) \subseteq \Ve$, it follows that $N(v + f_j) \subseteq A_1(\Gamma)$. The case $\delta = 0$ follows similarly.
\end{proof}

\begin{prop}
  \label{prop:Gamma-size}
  For every $\Gamma \in \OMCut$, $|\Gamma| \geq 2d(2d-1)$.
\end{prop}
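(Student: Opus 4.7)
The plan is to express $|\Gamma|$ in terms of the sizes of the even and odd parts of $A_1(\Gamma)$ via a handshake identity, and then reduce the desired bound to a vertex isoperimetric inequality in $\Z^d$. Set $S := A_1(\Gamma) \cap \Ve$ and $T := A_1(\Gamma) \cap \Vo$; since $x$ is an even vertex belonging to $A_1(\Gamma)$, we have $x \in S$. Because every edge of $\Gamma$ joins a vertex of $E_1(\Gamma) \subseteq \Vo$ to a vertex of $E_0(\Gamma)$, it follows that $E_0(\Gamma) \subseteq \Ve$. Consequently, every $v \in S$ has all $2d$ of its (necessarily odd) neighbors inside $A_1(\Gamma) \cap \Vo = T$, while by connectedness of $A_1(\Gamma)$ every $v \in T$ has at least one neighbor in $S$. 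Hence $T = N(S)$ and, moreover, the number of edges of $\Z^d$ with both endpoints in $A_1(\Gamma)$ equals $2d|S|$.

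Counting the sum of $\Z^d$-degrees of vertices of $A_1(\Gamma)$ in two ways --- once directly as $2d(|S|+|T|)$, once as twice the number of internal edges plus the number of edges leaving $A_1(\Gamma)$, the latter being $|\Gamma|$ by minimality of the cutset --- I obtain
\[
|\Gamma| = 2d\bigl(|T| - |S|\bigr) = 2d\bigl(|N(S)| - |S|\bigr).
\]
It therefore suffices to prove the following isoperimetric inequality in $\Z^d$: for every non-empty finite $S \subseteq \Ve$, one has $|N(S)| - |S| \geq 2d - 1$.

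I prove this by induction on $|S|$. The base case $|S|=1$ is immediate since every even vertex has exactly $2d$ odd neighbors. For the inductive step the main obstacle is that deleting an \emph{arbitrary} vertex of $S$ need not decrease $|N(S)|$ at all: a vertex all of whose neighbors are already covered by $N(S\setminus\{v\})$ contributes nothing. To avoid this, I delete an extreme vertex. Choose $v \in S$ maximizing $\|v\|_\infty$ and pick a coordinate $i$ with $|v_i|=\|v\|_\infty$; the assumption $|S|\geq 2$ forces $\|v\|_\infty\geq 1$, so $v_i\neq 0$. Let $j\in[2d]$ be the index for which $f_j$ is the unit vector in the $i$-th direction with sign matching $v_i$, and set $u := v + f_j$. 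Then $\|u\|_\infty = \|v\|_\infty + 1$, and every neighbor of $u$ other than $v$ also has $\|\cdot\|_\infty \geq \|v\|_\infty + 1$; the extremal choice of $v$ therefore forces all such neighbors outside $S$, so $u \in N(v) \setminus N(S\setminus\{v\})$. The inductive hypothesis applied to $S \setminus \{v\}$ then yields
\[
|N(S)| \geq |N(S \setminus \{v\})| + 1 \geq \bigl((|S|-1) + (2d-1)\bigr) + 1 = |S| + 2d - 1,
\]
which combined with the preceding identity proves $|\Gamma| \geq 2d(2d-1)$.
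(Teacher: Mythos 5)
Your proof is correct, but it takes a genuinely different route from the paper's. The paper argues directly by exhibiting $2d(2d-1)$ paths from $x$ to $B$ of the form $x,\,x-f_j,\,x-f_j+f_i,\,x-f_j+2f_i,\dots$ (with $j\in[2d]$, $i\neq j$); each such path must cross $\Gamma$, the only edge shared by two of them is an edge $\{x,x-f_j\}$, and those edges lie outside $\Gamma$ because $x\in\Ve$ forces $\PG(x)=0$. You instead establish the exact structural identity $|\Gamma|=2d\bigl(|A_1\cap\Vo|-|A_1\cap\Ve|\bigr)$ via a handshake count (valid because, as you implicitly use, $A_1(\Gamma)\cap B=\emptyset$ guarantees every vertex of $A_1(\Gamma)$ has full degree $2d$ in $G_n$, and every even vertex of $A_1(\Gamma)$ lies outside $E_1(\Gamma)\subseteq\Vo$ so has $\PG=0$), and you then reduce the problem to the vertex-isoperimetric inequality $|N(S)|-|S|\ge 2d-1$ for nonempty finite $S\subseteq\Ve$, which you prove correctly by peeling off an $\ell^\infty$-extremal vertex. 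The paper's path argument is shorter and more local; your argument is longer but surfaces the clean identity $|\Gamma|=2d\cdot(\text{isoperimetric deficit of }A_1\cap\Ve)$, which shows moreover exactly when the bound is attained (namely $|A_1\cap\Ve|=1$) and isolates a self-contained isoperimetric lemma about $\Z^d$ that does not refer to cutsets at all.
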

\begin{proof}
  For every $j \in [2d]$ and $i \in [2d] \setminus \{j\}$, consider the path $x, x - f_j, x - f_j + f_i, \ldots, x-f_j+\ell f_i$, where $\ell$ is the smallest integer such that $x - f_j + \ell f_i \in B$. Since $\Gamma$ separates $x$ from $B$, at least one edge on each such path must belong to $\Gamma$ and this edge is certainly not $\{x, x - f_j\}$ as $x \in \Ve \cap A_1(\Gamma) \subseteq A_1(\Gamma) \setminus E_1(\Gamma)$ and hence both $x$ and $x - f_j$ belong to $A_1(\Gamma)$. Moreover, the edges of the form $\{x, x-f_j\}$ are the only edges belonging to more than one of these $2d(2d-1)$ paths. It follows that $|\Gamma| \geq 2d(2d-1)$.
\end{proof}

\begin{prop}
  \label{prop:Gamma-non-trivial}
  For every $\Gamma \in \OMCut$ and $v \in V(G)$, $\PG(v) \leq 2d-1$.
\end{prop}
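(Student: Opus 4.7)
The plan is to argue by contradiction: suppose $\PG(v) = 2d$ for some $v \in V(G)$. First, $v$ must be an interior vertex of $[-n,n]^d$, since boundary vertices of $G$ have degree strictly less than $2d$. So all $2d$ edges incident to $v$ lie in $\Gamma$, which means $v$ has no neighbor in its own component of $G \setminus \Gamma$; in other words, $\{v\}$ is an entire connected component of $G \setminus \Gamma$.

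Since $A_0(\Gamma)$ and $A_1(\Gamma)$ partition $V(G)$ and each is a connected component of $G \setminus \Gamma$, the singleton $\{v\}$ must coincide with one of them. If $\{v\} = A_0(\Gamma)$, this contradicts $B \subseteq A_0(\Gamma)$, since $|B| > 1$ whenever $n \geq 2$. Hence $\{v\} = A_1(\Gamma)$, which forces $v = x$ because $x \in A_1(\Gamma)$ by definition.

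Now, since $v \in A_1(\Gamma)$ and $\PG(v) > 0$, we have $v \in E_1(\Gamma)$. The assumption $\Gamma \in \OMCut$ gives $E_1(\Gamma) \subseteq \Vo$, so $v \in \Vo$. But $v = x$ is an even vertex by the standing assumption of the section, a contradiction. Therefore no such $v$ exists, and $\PG(v) \leq 2d - 1$ for every $v \in V(G)$.

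There is no real obstacle here; the only thing one has to be careful about is remembering that the definition of $\OMCut$ depends on the chosen even vertex $x$, so the parity mismatch $x$ even versus $E_1(\Gamma) \subseteq \Vo$ is what rules out the extreme case. The boundary-vertex subtlety is handled at the very start by noting that only interior vertices of $G$ can even have degree $2d$.
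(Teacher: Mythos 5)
Your proof is correct and follows essentially the same approach as the paper: assume $\PG(v) = 2d$, observe that $\{v\}$ would then be an entire component of $G \setminus \Gamma$ and hence equal to $A_0(\Gamma)$ or $A_1(\Gamma)$, then rule out $A_0(\Gamma)$ via $B$ and rule out $A_1(\Gamma)$ via the parity clash between $x \in \Ve$ and $E_1(\Gamma) \subseteq \Vo$. The preliminary remark about interior vertices just makes explicit something the paper leaves implicit, and does not change the argument.
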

\begin{proof}
  Suppose that $\PG(v) = 2d$ for some $v \in V(G)$. Assume first that $v \in A_1(\Gamma)$. Since $A_1(\Gamma)$ is connected in $G \setminus \Gamma$, $x \in A_1(\Gamma)$, and all $2d$ edges incident to $v$ belong to $\Gamma $, it must be that $A_1(\Gamma) = E_1(\Gamma) = \{v\} = \{x\}$. But recall that $x \in \Ve$, which contradicts the fact that $E_1(\Gamma) \subseteq \Vo$ ($\Gamma$ is an odd cutset). Finally, as $B \subseteq A_0(\Gamma)$ and $A_0(\Gamma)$ is connected, one can quickly rule out the other possibility that $v \in A_0(\Gamma)$ and all $2d$ edges incident to $v$ are in $\Gamma$.
\end{proof}

\subsection{Counting odd cutsets}

\label{sec:counting-odd-cutsets}

In this section, which borrows heavily from~\cite{Pe}, we shall state three theorems that estimate the number of odd cutsets in various settings. Before we do that, we need to introduce some more notation. Following~\cite{Pe}, for every $\Gamma \in \OMCut$, $v \in V(G)$, and $E \subseteq V(G)$, we define
\begin{align*}
  \RG(v) & = \min \{ \PG(v), 2d - \PG(v) \} \quad \text{and} \\
  \RG(E) & = \sum_{v \in E} \RG(v).
\end{align*}
For integers $M$ and $R$, we let
\[
\OMCut(M,R) = \{ \Gamma \in \OMCut \colon |E_1(\Gamma)| = M \text{ and } \RG(E_1(\Gamma)) = R \}.
\]
A key observation that may elucidate the above definitions is that,
since
\begin{equation*}
{2d \choose k} = {2d \choose 2d-k} \leq
(2d)^{\min\{k,2d-k\}}\quad\text{for every $k$ with $0 \leq k \leq
2d$,}
\end{equation*}
then for every $v \in V(G)$, there are at most $(2d)^{\RG(v)}$ ways to choose $\PG(v)$ out of the $2d$ edges incident to $v$. One might think of the parameter $\RG(E_1(\Gamma))$ as a measure of the regularity of $\Gamma$. Note that $\RG(E_1(\Gamma)) \leq |\Gamma|$ and that a value of $\RG(E_1(\Gamma))$ significantly smaller than $|\Gamma|$ indicates that most vertices $v$ in $E_1(\Gamma)$ have $\PG(v)$ close to $2d$; this can be interpreted as some roughness of $\Gamma$. The following result is a straightforward corollary of~\cite[Theorem~4.5]{Pe}.

\begin{thm}
  \label{thm:OMCutMR}
  There exist constants $C$ and $d_0$ such that for all integers $M$, $R$, and $d$ with $d \geq d_0$,
  \[
  |\OMCut(M,R)| \leq \exp\left( \frac{C(\log d)^2}{d} R \right).
  \]
\end{thm}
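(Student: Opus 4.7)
The paper explicitly labels this as a ``straightforward corollary of \cite[Theorem~4.5]{Pe}'', so the plan is to reduce the statement to Peled's cutset-counting theorem, and the main work is in the bookkeeping rather than in new combinatorics. I would structure the argument as follows.

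First, I would isolate the purely local counting. Fix $M$, $R$, a candidate set $E_1 \subseteq \Vo$ of size $M$, and a profile $(r_v)_{v \in E_1}$ of nonnegative integers with $r_v \leq d$ and $\sum_v r_v = R$. For each $\Gamma \in \OMCut(M,R)$ with $E_1(\Gamma) = E_1$ and $R_\Gamma(v) = r_v$ for all $v \in E_1$, the edge set $\Gamma$ is determined by specifying, for each $v \in E_1$, which $P_\Gamma(v) \in \{r_v, 2d - r_v\}$ of the $2d$ edges at $v$ are in $\Gamma$. Using the identity ${2d \choose k} = {2d \choose 2d-k} \leq (2d)^{\min(k, 2d-k)}$ highlighted just before the theorem, the number of such choices is at most $2 \cdot (2d)^{r_v}$ per vertex, hence at most $2^M (2d)^R$ in total.

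Second, I would reduce to counting admissible pairs $(E_1, (r_v))$. The set $E_1$ is not arbitrary: by Proposition~\ref{prop:Edelta-connected} it is connected in $G^2$, and by Proposition~\ref{prop:Gamma-non-trivial} we have $1 \leq P_\Gamma(v) \leq 2d-1$ for each $v \in E_1$. The naive bound coming from Lemma~\ref{lemma:connected-sets} applied to $G^2$ (whose maximum degree is $O(d^2)$) combined with a choice of profile $(r_v)$ of total mass $R$ on $M$ atoms yields only $\exp(O((M+R)\log d))$, which is too weak because $M\log d$ need not be controlled by $R(\log d)^2/d$.

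Third, I would invoke \cite[Theorem~4.5]{Pe}, whose content is precisely the refined encoding of $\Gamma$ that compresses the joint data $(E_1, (r_v))$ into $\exp(CR(\log d)^2/d)$ bits. The key point in Peled's bound is that roughness is charged per unit of $R_\Gamma$ at a rate $(\log d)^2/d$ rather than $\log d$, because the ``interior'' vertices (those with $R_\Gamma(v)$ small and $P_\Gamma(v)$ close to $2d$) are essentially determined by the coarse shape of the cutset and do not carry additional cost; the cost concentrates on the deviations recorded by $R_\Gamma$. The combinatorial heart of Peled's proof is a Sapozhenko/container-style encoding, but as a user of the statement I do not need to reprove it.

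The only real obstacle is the translation step: I would need to verify that the quantities $M$ and $R_\Gamma(E_1(\Gamma))$ used here agree with the relevant parameters in \cite[Theorem~4.5]{Pe}, that the restriction to the finite graph $G_n$ with a distinguished even $x$ and boundary $B$ (rather than a cutset in $\Z^d$) does not alter the bound, and that the constants are absorbed into $C$. Once this dictionary is checked, the result is immediate: multiplying the container count by the local edge-choice bound above gives $|\OMCut(M,R)| \leq \exp(CR(\log d)^2/d)$ for a (possibly larger) absolute constant $C$, provided $d$ is at least some $d_0$.
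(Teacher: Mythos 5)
The paper gives no independent proof of Theorem~\ref{thm:OMCutMR}: it is stated as a direct citation of \cite[Theorem~4.5]{Pe}, and the only accompanying argument is the Remark following Theorem~\ref{thm:OMCut-approx}, which explains why the torus-based results of~\cite{Pe} transfer to the present finite-box setting (embed $G_n$ in $\Z_{4n}^d$ and observe that every $\Gamma \in \OMCut$ here is an odd minimal cutset separating $x$ from a far-away point in the torus). Your plan correctly identifies this translation step as the ``only real obstacle,'' so the overall strategy agrees with the paper's.

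However, the last sentence of your proposal contains a genuine logical error. You propose to count pairs $(E_1, (r_v))$ by Peled's theorem and then \emph{multiply} by your ``local edge-choice bound'' $2^M (2d)^R$. If both factors were available, the product would be
\[
2^M (2d)^R \cdot \exp\!\left(\frac{C R (\log d)^2}{d}\right) \;=\; \exp\!\left( M\log 2 + R\log(2d) + \frac{C R (\log d)^2}{d} \right),
\]
and the term $R\log(2d)$ dominates $R(\log d)^2/d$ by a factor of order $d/\log d$; this does \emph{not} yield the claimed $\exp(CR(\log d)^2/d)$. In fact $(2d)^{\RG(v)}$ is exactly the na\"ive per-vertex cost that Peled's Theorem~4.5 is designed to \emph{beat}: the Sapozhenko-type argument produces a coarse description of $\Gamma$ of size $\exp(O(R(\log d)^2/d))$, and from that description the cutset is reconstructed directly, so the theorem bounds $|\OMCut(M,R)|$ itself, not merely the number of ``shapes'' to which your local factor would then be appended. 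The observation ${2d\choose k}\le(2d)^{\min\{k,2d-k\}}$, which the paper quotes just before the theorem, is there to motivate why the exponent is phrased in terms of $\RG$; it is not an extra multiplicative step to tack onto the cited bound. Removing the spurious multiplication and citing Peled's theorem as directly bounding the number of cutsets (with the translation Remark applied) closes the gap.
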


Recalling the definition of $E_{1,e}(\Gamma)$, we say that a set $E \subseteq V(G)$ is an {\em interior approximation} to $\Gamma$ if
\begin{equation}
  \label{dfn:int_approx}
  E_1(\Gamma) \setminus E_{1,e}(\Gamma) \subseteq E \subseteq A_1(\Gamma).
\end{equation}
The following result is a straightforward corollary of~\cite[Theorem~4.13]{Pe}.

\begin{thm}
  \label{thm:OMCut-approx}
  There exist constants $C$ and $d_0$ such that for all integers $L$ and $d$ with $d \geq d_0$, there exists a family $\E$ of subsets of $V(G)$ satisfying
  \[
  |\E| \leq \exp\left( \frac{C(\log d)^2}{d^{3/2}} L \right)
  \]
  and such that for every $\Gamma \in \OMCut$ with $|\Gamma| = L$, there is an $E \in \E$ that is an interior approximation of $\Gamma$.
\end{thm}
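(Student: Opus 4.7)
The plan is to derive Theorem~\ref{thm:OMCut-approx} as a direct corollary of~\cite[Theorem~4.13]{Pe}, which is the corresponding counting result formulated in the framework of~\cite{Pe}. That earlier theorem provides, for every integer $L$, a family of approximating sets for all $\Gamma \in \OMCut$ with $|\Gamma| = L$ whose cardinality is bounded by $\exp(C(\log d)^2 L / d^{3/2})$, and where the notion of approximation matches (up to notational adjustments) the interior approximation of~\eqref{dfn:int_approx}.

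The construction in~\cite{Pe} assigns to each $\Gamma$ a canonical approximating set built around (a slight enlargement of) the rough part $E_1(\Gamma) \setminus E_{1,e}(\Gamma)$. The number of such approximations is controlled by combining two ingredients. First, Proposition~\ref{prop:Edelta-connected} together with Lemma~\ref{lemma:connected-sets} gives a bound of the form $\exp(O(M \log d))$ on the number of connected substructures of size $M$ in $G^2$. Second, the defining condition $P_\Gamma(v) < 2d - \sqrt{d}$ for vertices in $E_1(\Gamma) \setminus E_{1,e}(\Gamma)$ is exploited through an isoperimetric-type argument that, together with the fact that each such vertex carries a sizeable ``deficit'' $2d - P_\Gamma(v) > \sqrt{d}$ of neighbors inside $A_1(\Gamma)$, produces the factor $d^{3/2}$ in the denominator of the exponent.

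The only point requiring care in the reduction is to verify that the approximation produced by~\cite[Theorem~4.13]{Pe} is indeed an interior approximation in the sense of~\eqref{dfn:int_approx}. The containment $E_1(\Gamma) \setminus E_{1,e}(\Gamma) \subseteq E$ is immediate from the construction, since the skeleton of the approximation is built to contain (or coincide with) the rough part. The containment $E \subseteq A_1(\Gamma)$ is similarly immediate because every vertex added to the skeleton during the construction is selected from $A_1(\Gamma)$.

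The main obstacle, were one to try to reprove Theorem~\ref{thm:OMCut-approx} from scratch, lies precisely in the isoperimetric step yielding the $d^{3/2}$ factor. A crude application of Lemma~\ref{lemma:connected-sets} to $E_1(\Gamma)$ alone would give a bound of only $\exp(O(L \log d / d))$, which is far too weak. The key technical insight in~\cite{Pe} is to exploit not merely the connectedness of the rough skeleton in $G^2$ but also the geometric structure imposed on $\Gamma$ by the lattice $\Z^d$ in high dimension, via an argument that trades off between rough and exposed portions of the cutset. We defer to~\cite{Pe} for this argument and rely on it as a black box.
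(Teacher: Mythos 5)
Your approach coincides with the paper's: Theorem~\ref{thm:OMCut-approx} is stated in the paper simply as ``a straightforward corollary of~\cite[Theorem~4.13]{Pe},'' with no independent proof beyond a transfer remark, and you likewise invoke that result as a black box. However, the point of care you single out is not the one the paper flags. You focus on checking that the notion of approximation produced by~\cite{Pe} matches~\eqref{dfn:int_approx}; the paper treats this identification as immediate. The two adaptations the paper actually records are different. First, in~\cite{Pe} the family $\OMCut$ is defined for odd minimal cutsets separating a point from a far-away point in a discrete \emph{torus}, so one must embed $G_n$ into $\Z_{4n}^d$ and note that the cutsets in the present $\OMCut$ form a subset of those handled in~\cite{Pe}. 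Second, the bound on $|\E|$ in~\cite[Theorem~4.13]{Pe} carries an extra multiplicative factor of~$2$; this is absorbed into the constant $C$ using the lower bound $|\Gamma| \geq 2d(2d-1)$ from Proposition~\ref{prop:Gamma-size}. Your proposal silently assumes the bound from~\cite{Pe} is already in the form required, and the paragraph speculating about the internal isoperimetric mechanism of~\cite{Pe} is not part of the reduction and does not replace these two checks. Neither omission is deep, but they are the only nontrivial content of the paper's argument for this theorem.
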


\begin{remark}
  Theorems~ \ref{thm:OMCutMR} and \ref{thm:OMCut-approx} were proved in~\cite{Pe} for an alternative definition of $\OMCut$. There, $\OMCut$ was defined as the set of odd minimal cutsets separating two points (or a set and a point) in a discrete \emph{torus}. However, the theorems apply to our setting, since we can think that $G_n$ is embedded naturally in the discrete torus $\Z_{4n}^d$ and note that then $\OMCut$ (in our definition) is a subset of the set of odd minimal cutsets separating $x$ from a ``far away'' point in this
  torus. In addition, in \cite{Pe}, the upper bound on $|\E|$ in
  Theorem~\ref{thm:OMCut-approx} had an additional factor of 2. In
  our application, this factor can be absorbed in the constant $C$ by
  Proposition~\ref{prop:Gamma-size}.
\end{remark}

One of the main ingredients in our proof of Theorem~\ref{thm:main} that will allow us to improve the bound on $\lambda(d)$ given by Theorem~\ref{thm:GaKa-weak} is a refined version of Theorem~\ref{thm:OMCut-approx}. The main idea behind this improved interior approximation theorem is specializing the family $\E$ from the statement of Theorem~\ref{thm:OMCut-approx} to work only for odd cutsets with a particular distribution of edges adjacent to exposed and non-exposed vertices (i.e., the vertices in $E_{1,e}$ and the vertices in $E_1 \setminus E_{1,e}$). To be more precise, for any $\varepsilon$ with $d^{-1/3} \leq \varepsilon \leq 1/2$, recalling the definition of $\Gamma_r$, we let
\[
\OMCut(\varepsilon) = \{ \Gamma \in \OMCut \colon \varepsilon|\Gamma| < |\Gamma_r| \leq 2\varepsilon|\Gamma| \}.
\]
In other words, $\OMCut(\varepsilon)$ consists of those odd cutsets
whose only about $\varepsilon$-fraction of edges are adjacent to
non-exposed vertices. Since interior approximations ``detect'' only
non-exposed vertices, it should not come at a surprise that as
$\varepsilon$ gets smaller, approximating cutsets in
$\OMCut(\varepsilon)$ becomes easier. We show that the following
statement is true.

\begin{thm}
  \label{thm:OMCuteps-approx}
  There exist constants $C$ and $d_0$ such that for all integers $L$ and $d$ with $d \geq d_0$, and every $\varepsilon$ with $d^{-1/2} \leq \varepsilon \leq 1/2$, there exists a family $\E$ of subsets of $V(G)$ satisfying
  \[
  |\E| \leq \exp\left( \frac{C\sqrt{\varepsilon}(\log d)^2}{d^{3/2}} L \right)
  \]
  and such that for every $\Gamma \in \OMCut(\varepsilon)$ with $|\Gamma| = L$, there is an $E \in \E$ that is an interior approximation to $\Gamma$.
\end{thm}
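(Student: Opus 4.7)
I would re-run the proof of Theorem~\ref{thm:OMCut-approx} (Theorem~4.13 in~\cite{Pe}) while carrying along a free scale parameter $s$ to be tuned at the end using the hypothesis $|\Gamma_r| \leq 2\varepsilon L$. Recall that an interior approximation, by definition~\eqref{dfn:int_approx}, is only required to contain the non-exposed part $E_1(\Gamma)\setminus E_{1,e}(\Gamma)$; since every edge of $\Gamma$ incident to a vertex of this set lies in $\Gamma_r$, we have $|E_1(\Gamma)\setminus E_{1,e}(\Gamma)| \leq |\Gamma_r| \leq 2\varepsilon L$. Thus the cost of ``locating'' the non-exposed part scales with $|\Gamma_r|$, while the cost of refining a coarse description into an actual subset of $A_1(\Gamma)$ still scales with $|\Gamma| = L$. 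The essential new input compared to the earlier theorem is to separate these two contributions and optimize between them.

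\textbf{Two-stage construction.} I would adapt the coarse-graining of~\cite[Theorem~4.13]{Pe} to operate at scale $s$, building the approximation $E \in \E$ in two stages. First, select a connected (in an auxiliary block graph) collection of $s$-scale blocks that together cover $E_1(\Gamma) \setminus E_{1,e}(\Gamma)$, invoking Proposition~\ref{prop:Edelta-connected} together with Lemma~\ref{lemma:connected-sets} applied to the block graph; the exposed part $E_{1,e}(\Gamma)$ is, by its near-maximal degree in $\Gamma$, locally close to a hyperplane and hence ``follows for free'' once the non-exposed part is pinned down. Second, refine the chosen block-level set, vertex by vertex near its boundary, into an actual interior approximation. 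This should yield an estimate of the form
\[
\log |\E| \;\leq\; \frac{C_1\, |\Gamma_r|\, (\log d)^2}{s \cdot d^{3/2}} \;+\; \frac{C_2\, s\, |\Gamma|\, (\log d)^2}{d^{3/2}},
\]
with the first (cheaper for larger $s$) term coming from block selection and the second (more expensive for larger $s$) from the refinement. Optimizing $s$ to balance these and using $|\Gamma_r| \leq 2\varepsilon L$ and $|\Gamma| = L$ produces a bound proportional to $\sqrt{|\Gamma_r|\cdot|\Gamma|}\,(\log d)^2/d^{3/2} \leq \sqrt{2\varepsilon}\, L (\log d)^2/d^{3/2}$, which is the claimed inequality. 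The lower bound $\varepsilon \geq d^{-1/2}$ in the hypothesis is exactly what is needed to ensure the optimal $s$ is at least $1$, so that a nontrivial coarse-graining actually improves over the single-scale argument of Theorem~\ref{thm:OMCut-approx} (which corresponds to $s=1$).

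\textbf{Main obstacle.} The delicate part is arranging the block-level stage so that its complexity is genuinely driven by $|\Gamma_r|$ rather than by the total $|\Gamma|$. This amounts to showing that once the non-exposed part has been coarsely located, the ``bulk'' of $\Gamma$ — which is nearly flat on the scale $s$ because vertices of $E_{1,e}(\Gamma)$ have almost all their neighbors in $E_0(\Gamma)$ — can be absorbed into the approximation with only a refinement cost, not a location cost. Verifying this uniformly over the admissible range of $s$, and checking that the connectedness of $E_1(\Gamma)$ in $G^2$ supplied by Proposition~\ref{prop:Edelta-connected} does translate into a connectedness property of the selected block set that is suitable for Lemma~\ref{lemma:connected-sets}, is the technical core of the argument and the step where care is needed to track constants through the two-scale bookkeeping of~\cite{Pe}.
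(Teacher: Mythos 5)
Your proposal is conceptually in the right neighborhood: you correctly identify that the $\sqrt{\varepsilon}$-improvement must come from balancing two contributions, one driven by $|\Gamma_r| \leq 2\varepsilon L$ and one by $|\Gamma| = L$, so that after optimization the cost is $\approx \sqrt{|\Gamma_r|\cdot|\Gamma|}\,(\log d)^2/d^{3/2} \leq \sqrt{2\varepsilon}\,L(\log d)^2/d^{3/2}$. However, the mechanism you propose — spatial coarse-graining at a scale $s$ — is not what the paper does, and as written it does not seem to work.

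What the paper actually does is keep the single-scale encoding from~\cite{Pe} but refine the ``dominating set'' proposition (our Proposition~\ref{prop:Edeltat}). A random subset $E_\delta^t$ of the boundary is chosen vertex by vertex with selection probabilities $p_v$ that \emph{depend on whether $v$ is exposed}: non-exposed vertices in $E_1$ are boosted by a factor $1/\sqrt{\varepsilon d}$ while exposed vertices are suppressed by a factor $\sqrt{\varepsilon/d}$. The expected $\RG(E_\delta^t)$ then has one term $\sim (1/\sqrt{\varepsilon})\,|\Gamma_r|\log d/d^{3/2}$ and another $\sim \sqrt{\varepsilon}\,|\Gamma|\log d/d^{3/2}$; since $|\Gamma_r|\leq 2\varepsilon|\Gamma|$ both are $O(\sqrt{\varepsilon}L\log d/d^{3/2})$. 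The number of ways to encode (dominating set plus local types $\NG(E_\delta^t)$) is $(2d)^{O(\RG(E^t))}$, via Lemma~\ref{lemma:connected-sets}, Proposition~\ref{prop:Et-G8-connected}, and Proposition~\ref{prop:cNRE}, and the reconstruction algorithm of Proposition~\ref{prop:int-approx} inverts the encoding. The free parameter being optimized is the probability boost, not a spatial scale, and the argument crucially involves Markov, Chernoff and stochastic-domination estimates that your outline does not engage with.

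Concretely, there appear to be gaps in your plan. First, the cost formula $C_1|\Gamma_r|(\log d)^2/(sd^{3/2}) + C_2 s|\Gamma|(\log d)^2/d^{3/2}$ is asserted rather than derived; in $\Z^d$ with $d$ large, the number of vertices within distance $s$ of a given set grows like $(2d)^s$, so a genuine $s$-scale refinement stage would not cost a factor linear in $s$. Second, even accepting your formula at face value, the optimizer is $s^* = \sqrt{C_1|\Gamma_r|/(C_2|\Gamma|)} \approx \sqrt{\varepsilon} \leq 1/\sqrt{2} < 1$ unless $C_1\gg C_2$, so your own bookkeeping suggests there is no room for non-trivial coarse-graining; your claim that $\varepsilon\geq d^{-1/2}$ guarantees $s^*\geq 1$ does not follow. (In the paper, $\varepsilon\geq d^{-1/2}$ is instead needed so that the boosted selection probabilities satisfy $p_v\leq 1$ and so that the Chernoff-type tail bounds go through.) Third, the assertion that the exposed part $E_{1,e}$ ``follows for free'' once the non-exposed part is located is not supported: the reconstruction of vertices with $\PG(v)$ either very large or very small requires genuine work (parts~(c) and~(d) of Proposition~\ref{prop:Edeltat} and Claims~\ref{cl:3}--\ref{cl:6} in Proposition~\ref{prop:int-approx}), and the sets $E_0^t,E_1^t$ must be designed to make that reconstruction possible, not merely to cover $E_1\setminus E_{1,e}$.
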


\begin{remark}
  In fact, our proof of Theorem~\ref{thm:OMCuteps-approx} yields a somewhat stronger property of the family $\E$. We show that for every $\Gamma \in \OMCut(\varepsilon)$ with $|\Gamma| = L$, there is an $E \in \E$ such that $\{v \in E_1(\Gamma) \colon \PG(v) < 2d - \sqrt{\varepsilon d}\} \subseteq E \subseteq A_1(\Gamma)$, see Proposition~\ref{prop:int-approx}.
\end{remark}

As we remarked in the introduction, the proof of our main result, Theorem~\ref{thm:main}, has been split up into the geometric part, which was presented in this section, and the probabilistic part, which we will present in the next few sections. Since these parts are almost completely independent, we will postpone the proof of Theorem~\ref{thm:OMCuteps-approx} to Section~\ref{sec:OMCut-proof} and use it as a ``black box'' when we derive the main result.

\subsection{Definition of the transformations}

In this section, we define the mapping that we alluded to in Section~\ref{sec:outline} and establish its key properties. Recall the definitions of $\Omega$ and $\Fo$ from Section~\ref{sec:locating_cutset}. Throughout this section, we fix some $\omega \in \Omega$, let $\Gamma = \Break(\omega)$, $A_1 = A_1(\Gamma)$, $E_0 = E_0(\Gamma)$, $E_1 = E_1(\Gamma)$, $E_{1,e} = E_{1,e}(\Gamma)$, and $E_{1,j} = E_{1,j}(\Gamma)$ and $E_{1,j,x} = E_{1,j,x}(\Gamma)$ for every $j \in [2d]$. Our transformation will take one of two possible forms, depending on the shape of $\Gamma$.

As a preparatory step, for every $j \in [2d]$, we define the $j$th
{\em shift transformation} $\Shift_j \colon \Omega \to \Fo$ (see
Figure~\ref{fig:shift}) by
\[
\Shift_j(\omega)(v) =
\begin{cases}
  \omega(v + f_j) & \text{if $v \in A_1$}, \\
  \omega(v) & \text{otherwise}.
\end{cases}
\]
We remark that such a transformation already appeared in~\cite{GaKa}.

\begin{prop}
  \label{prop:shift-feasible}
  The configuration $\Shift_j(\omega)$ is indeed feasible with the odd boundary vertices occupied. In other words, $\Shift_j(\omega) \in \Fo$.
\end{prop}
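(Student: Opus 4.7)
The plan is to verify the two defining properties of $\Fo$ for $\omega' := \Shift_j(\omega)$: that the odd boundary vertices are occupied, and that $\omega'$ is feasible. The boundary condition will be immediate, while feasibility will follow from a short case analysis based on which side of the cutset $\Gamma = \Break(\omega)$ the endpoints of a potential ``bad'' edge lie on.

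\textbf{Boundary.} Recall from Section~\ref{sec:locating_cutset} that $B \subseteq A_0' \subseteq A_0(\Gamma)$, so $B$ is disjoint from $A_1 = A_1(\Gamma)$. Hence for every $v \in B \cap \Vo$, the definition of $\Shift_j$ gives $\omega'(v) = \omega(v) = 1$, since $\omega \in \Fo$.

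\textbf{Feasibility.} Suppose towards a contradiction that there is an edge $\{u,v\} \in E(G)$ with $\omega'(u) = \omega'(v) = 1$. I split into three cases. If $u,v \in A_1$, then $\omega'(u) = \omega(u+f_j)$ and $\omega'(v) = \omega(v+f_j)$; since $u,v \notin B$ (as $B \cap A_1 = \emptyset$), both $u+f_j$ and $v+f_j$ lie in $V(G)$, and they are adjacent in $G$ because $u$ and $v$ are. This contradicts the feasibility of $\omega$. If $u,v \notin A_1$, then $\omega'(u) = \omega(u)$ and $\omega'(v) = \omega(v)$, again contradicting the feasibility of $\omega$ directly.

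The remaining case is $u \in A_1$ and $v \notin A_1$ (up to relabeling). Then the edge $\{u,v\}$ crosses between $A_1(\Gamma)$ and $A_0(\Gamma)$, so by the very definition of $\Gamma$ as the set of such crossing edges we have $\{u,v\} \in \Gamma$. In particular $\PG(v) > 0$ and $v \in A_0(\Gamma)$, so $v \in E_0(\Gamma)$. By Proposition~\ref{prop:Gamma}, $E_0(\Gamma) \subseteq \Ve \cap \Vva$, hence $\omega(v) = 0$; since $v \notin A_1$, this gives $\omega'(v) = \omega(v) = 0$, contradicting $\omega'(v) = 1$. This completes the proof. The only slightly subtle point is ensuring that $u+f_j, v+f_j$ lie in $V(G)$ in the first case, which is handled by observing that $A_1 \cap B = \emptyset$ and hence vertices of $A_1$ have all $2d$ lattice neighbors inside $[-n,n]^d$.
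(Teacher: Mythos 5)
Your proof is correct and follows essentially the same case analysis as the paper: boundary preservation via $B \cap A_1 = \emptyset$, and feasibility by splitting a putative bad edge $\{u,v\}$ according to whether its endpoints lie in $A_1$, with the mixed case handled by $v \in E_0(\Gamma) \subseteq \Vva$ (Proposition~\ref{prop:Gamma}). The only difference is that you explicitly verify $u+f_j \in V(G)$ for $u \in A_1$ (using $A_1 \subseteq [-(n-1),n-1]^d$), a small point the paper leaves implicit.
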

\begin{proof}
  Since $\Gamma \in \OMCut$, then $B \cap A_1 = \emptyset$ and therefore $\Shift_j(\omega)(v) = \omega(v) = 1$ for every $v \in B \cap \Vo$. It remains to check that $\Shift_j(\omega)$ is feasible, i.e., for no $v \in V(G)$ and $i \in [2d]$, both $v$ and $v + f_i$ are occupied. If both $v$ and $v + f_i$ belong to $A_1$ or both $v$ and $v + f_i$ are not in $A_1$, then this follows from the fact that $\omega$ is feasible. Otherwise, assume WLOG that $v \not\in A_1$ and $v + f_i \in A_1$. It follows that $v \in E_0$, so $\Shift_j(\omega)(v) = \omega(v) = 0$ by Proposition~\ref{prop:Gamma}.
\end{proof}

\begin{prop}
  \label{prop:shift-gap}
  For all $v \in E_{1,j}$ and $i \in [2d]$, we have $\Shift_j(\omega)(v + f_i) = 0$.
\end{prop}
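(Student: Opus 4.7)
My plan is to fix $v \in E_{1,j}$ and $i \in [2d]$ and show $\Shift_j(\omega)(v + f_i) = 0$ by splitting into two cases according to whether $v + f_i \in A_1$ or not. The ingredients needed are: the basic identities from Proposition~\ref{prop:Gamma} ($E_1 \subseteq \Vo \cap \Vva$ and $E_0 \subseteq \Ve \cap \Vva$), the fact that $v \in E_{1,j}$ forces $v \in A_1$ and $v + f_j \in A_0(\Gamma)$ (so $v + f_j \in E_0$), and the neighborhood-closure statement of Proposition~\ref{prop:NAdelta}.

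In the first case, when $v + f_i \notin A_1$, the edge $\{v, v + f_i\}$ lies in $\Gamma$, so $v + f_i \in E_0 \subseteq \Ve \cap \Vva$. Then by definition of the shift, $\Shift_j(\omega)(v + f_i) = \omega(v + f_i) = 0$. This case covers $i = j$ automatically, since $v + f_j \in E_0$.

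In the second case, when $v + f_i \in A_1$, the shift gives $\Shift_j(\omega)(v + f_i) = \omega(v + f_i + f_j)$, so the task reduces to showing that this latter vertex is vacant. The key step, which I see as the substantive part of the argument, is to identify the vertex $v + f_i + f_j$ as lying in $E_1$. Since $v \in \Vo$, the vertex $v + f_i$ is even, and being in $A_1$ it satisfies $N(v + f_i) \subseteq A_1$ by Proposition~\ref{prop:NAdelta}; in particular $v + f_i + f_j \in A_1$. On the other hand, $v + f_j \in A_0$ is also a neighbor of $v + f_i + f_j$ (since $(v + f_i + f_j) - f_i = v + f_j$), so the edge $\{v + f_j, v + f_i + f_j\}$ crosses $\Gamma$, placing $v + f_i + f_j$ in $E_1$. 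Proposition~\ref{prop:Gamma} then yields $\omega(v + f_i + f_j) = 0$, finishing the case.

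I do not anticipate a real obstacle here; the only subtlety is verifying in the second case that the shifted value is genuinely being read off at a vertex of $E_1$ rather than at some arbitrary odd vertex deep inside $A_1$ (which need not be vacant). The argument that $v + f_j$ supplies the needed $A_0$-neighbor of $v + f_i + f_j$ is precisely what makes this work, and it mirrors the intuition from Figure~\ref{fig:shift} that vertices with an edge of $\Break(\omega)$ pointing in direction $f_j$ end up, after shifting, surrounded by vacant neighbors.
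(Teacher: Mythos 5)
Your proof is correct and follows essentially the same route as the paper's: the case split on whether $v+f_i \in A_1$ is equivalent to the paper's split on whether $v+f_i \in E_0$ (since $v+f_i \in N(v) \subseteq A_1 \cup E_0$), and the key step in both is identifying $v+f_i+f_j$ as a vertex of $E_1$ via its neighbor $v+f_j \in E_0$. Your invocation of Proposition~\ref{prop:NAdelta} to justify $v+f_i+f_j \in A_1$ just makes explicit a step the paper leaves implicit.
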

\begin{proof}
  Fix some $v \in E_{1,j}$ and $i \in [2d]$. By definition, $v + f_j \in E_0$. If $v + f_i \in E_0$, then $\Shift_j(\omega)(v + f_i) = \omega(v + f_i) = 0$ by Proposition~\ref{prop:Gamma}. If $v + f_i \not\in E_0$, then $v + f_i + f_j \in E_1$ (since $v + f_i + f_j \in A_1$ and it is adjacent to $v + f_j \not\in A_1$). It follows that $\Shift_j(\omega)(v + f_i) = \omega(v + f_i + f_j) = 0$, where the last equality again follows from Proposition~\ref{prop:Gamma}.
\end{proof}

\begin{prop}
  \label{prop:shift-measure}
  The shift transformation is preserves $\muo$, i.e., $\muo(\Shift_j(\omega)) = \muo(\omega)$.
\end{prop}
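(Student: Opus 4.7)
The plan is to reduce the claim to showing that $|\Shift_j(\omega)| = |\omega|$, where $|\omega|$ denotes the number of occupied vertices in $\omega$. Because the conditional measure $\muo$ equals $\mu_n$ restricted to $\Fo$ and renormalized, one has $\muo(\omega)\propto \lambda^{|\omega|}$ for every $\omega\in\Fo$, with the same proportionality constant. Proposition~\ref{prop:shift-feasible} already gives $\Shift_j(\omega)\in\Fo$, so equality of occupied-vertex counts will immediately imply the claim.

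To count occupied vertices, I would note that $\Shift_j(\omega)$ and $\omega$ agree on $V(G)\setminus A_1$, so the difference of totals is
\[
|\Shift_j(\omega)| - |\omega| = \sum_{v\in A_1}\bigl(\omega(v+f_j)-\omega(v)\bigr) = \sum_{u\in (A_1+f_j)\setminus A_1}\omega(u) - \sum_{u\in A_1\setminus (A_1+f_j)}\omega(u),
\]
after the substitution $u=v+f_j$ in the first sum, where $A_1+f_j=\{v+f_j : v\in A_1\}$.

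The key observation I would then make is that both remaining index sets consist entirely of vacant vertices. A vertex $u\in(A_1+f_j)\setminus A_1$ satisfies $u-f_j\in A_1$ and $u\notin A_1$, so the edge $\{u-f_j,u\}$ crosses between $A_1$ and $A_0$ and hence lies in $\Gamma$; thus $u\in E_0(\Gamma)$. Symmetrically, $u\in A_1\setminus(A_1+f_j)$ forces $u\in E_1(\Gamma)$. Since Proposition~\ref{prop:Gamma} tells us $E_0(\Gamma)\cup E_1(\Gamma)\subseteq \Vva$, both sums vanish and $|\Shift_j(\omega)|=|\omega|$, completing the argument.

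I do not anticipate a genuine obstacle here: the only nontrivial input is Proposition~\ref{prop:Gamma}, which guarantees that the vertices on both sides of $\Gamma$ are vacant, so the shift neither creates nor destroys occupied vertices near the cutset. The only small subtlety is keeping straight that $\Shift_j$ reads values from $v+f_j$ but that the resulting ``gaps'' appear on the opposite side of $\Gamma$, and this is handled cleanly by the change of variable above.
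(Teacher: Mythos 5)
Your proof is correct and uses essentially the same ingredients as the paper's: reduction to counting occupied vertices, agreement of $\Shift_j(\omega)$ and $\omega$ off $A_1$, and Proposition~\ref{prop:Gamma} to kill the contribution of the boundary vertices. The paper phrases the count-preservation as a bijection $v\mapsto v+f_j$ between occupied vertices of $A_1$ under $\Shift_j(\omega)$ and under $\omega$, while you telescope the difference and show the symmetric-difference correction terms vanish; these are two presentations of the same argument.
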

\begin{proof}
  Since $\Shift_j(\omega)(v) = \omega(v)$ for all $v \not\in A_1$, it suffices to show that $|\{v \in A_1 \colon \Shift_j(\omega)(v) = 1\}| = |\{v \in A_1 \colon \omega(v) = 1\}|$. To see this, note that if $v \in A_1$ and $\Shift_j(\omega)(v) = 1$, then $\omega(v + f_j) = 1$ and hence $v + f_j \in A_1$ since otherwise $v + f_j \in E_0$ and $\omega(v + f_j) = 0$ by Proposition~\ref{prop:Gamma}. Conversely, if $v + f_j \in A_1$ and $\omega(v + f_j) = 1$, then $v + f_j \not\in E_1$ by Proposition~\ref{prop:Gamma} and hence $v \in A_1$ and $\Shift_j(\omega)(v) = \omega(v + f_j) = 1$.
\end{proof}

\subsubsection{The shift transformation}

We are now ready to define the first expanding transformation $T_1 \colon \Omega \to \cP(\Fo)$. First, for every $j \in [2d]$, we define the transformation $T_{1,j} \colon \Omega \to \cP(\Fo)$ by letting $T_{1,j}(\omega)$ be the set of all configurations $\omega'$ of the form
\[
\omega'(v) =
\begin{cases}
  \Shift_j(\omega)(v) & \text{if $v \not\in E_{1,j}$}, \\
  \varepsilon_v & \text{otherwise},
\end{cases}
\]
where $(\varepsilon_v)_v$ is an arbitrary $\{0,1\}$-sequence indexed by $E_{1,j}$. Propositions~\ref{prop:shift-feasible} and~\ref{prop:shift-gap} imply that each such $\omega'$ indeed belongs to $\Fo$ whereas Proposition~\ref{prop:shift-measure} implies that
\[
\muo(T_{1,j}(\omega)) = (1 + \lambda)^{|E_{1,j}|}\muo(\omega).
\]
Next, observe that $|\Gamma| = \sum_{j=1}^{2d} |\Gamma^j|$ and hence $|\Gamma^j| \geq |\Gamma|/(2d)$ for some $j$; in fact, since $\Gamma \in \OMCut$, we have that $|\Gamma^j| = |\Gamma|/(2d)$ for all $j$, see~\cite{Pe}, but we will not need this. We define the transformation $T_1$ by $T_1(\omega) = T_{1, j}(\omega)$, where $j = j(\Gamma) = j(\Break(\omega))$ is the smallest index $j$ satisfying $|\Gamma^j| \geq |\Gamma|/(2d)$. It follows that
\begin{equation}
  \label{ineq:muo-T1-omega}
  \muo(T_1(\omega)) = (1+\lambda)^{|E_{1,j}|}\muo(\omega) = (1+\lambda)^{|\Gamma^j|}\muo(\omega) \geq (1+\lambda)^{|\Gamma|/(2d)}\muo(\omega).
\end{equation}
Finally, we show that we can ``invert'' $T_1$ if we know
$\Break(\omega)$.

\begin{prop}
  \label{prop:T1-recovery}
  For every $\Gamma \in \OMCut$, and $\omega' \in \Fo$, there is at most one $\omega \in \Omega$ satisfying $\Gamma = \Break(\omega)$ and $\omega' \in T_1(\omega)$.
\end{prop}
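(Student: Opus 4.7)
The plan is to show that, given $\Gamma \in \OMCut$ and $\omega' \in \Fo$, the configuration $\omega$ is forced. Since the direction $j = j(\Gamma)$, the set $A_1 := A_1(\Gamma)$, the set $E_1 := E_1(\Gamma)$, and the set $E_{1,j}(\Gamma)$ depend only on $\Gamma$, they may be treated as known. Off $A_1$, both $\Shift_j$ and the modification step defining $T_{1,j}$ act as the identity, so the requirement $\omega' \in T_1(\omega) = T_{1,j}(\omega)$ immediately forces $\omega(v) = \omega'(v)$ for every $v \notin A_1$.

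To recover $\omega$ on $A_1$, I would inspect an arbitrary $u \in A_1$ and split according to where $u - f_j$ lies. In the first case $u - f_j \in A_1$: the edge $\{u - f_j, u\}$ then has both endpoints in $A_1$ and therefore does not belong to $\Gamma$, so $u - f_j \notin E_{1,j}$. Consequently $\omega'(u - f_j) = \Shift_j(\omega)(u - f_j) = \omega(u)$, recovering $\omega(u)$. In the second case $u - f_j \notin A_1$: then $u - f_j \in A_0(\Gamma)$ and the edge $\{u - f_j, u\}$ crosses the cutset (since $\Gamma$ consists of exactly the edges between $A_0(\Gamma)$ and $A_1(\Gamma)$ by minimality). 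In particular $u$ is incident to an edge of $\Gamma$, so $u \in E_1$, and Proposition~\ref{prop:Gamma} gives $\omega(u) = 0$. Either way, $\omega(u)$ is uniquely determined by $\Gamma$ and $\omega'$.

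This is essentially a bookkeeping verification, so there is no serious obstacle. The only genuinely used ingredient is Proposition~\ref{prop:Gamma}: without the fact that $E_1 \subseteq \Vva$, the shift transformation would appear to ``forget'' the value of $\omega$ precisely at the vertices of $A_1$ that lie against the cutset (those with $u - f_j \notin A_1$), because for such $u$ we cannot read off $\omega(u)$ from the deterministic part of $\omega'$. Vacancy on $E_1$ pins this value to $0$, and once both cases are handled the reconstruction is unambiguous, yielding uniqueness of $\omega$.
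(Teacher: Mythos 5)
Your proof is correct and follows essentially the same route as the paper's: fix $\Gamma$ (hence $j(\Gamma)$, $A_1$, $E_1$, $E_{1,j}$), read $\omega$ off $A_1$ directly, recover $\omega(u)=\omega'(u-f_j)$ when $u-f_j\in A_1$, and invoke Proposition~\ref{prop:Gamma} to pin $\omega(u)=0$ when $u-f_j\notin A_1$. The bookkeeping and the use of vacancy on $E_1$ match the paper's argument exactly.
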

\begin{proof}
  With $\Gamma \in \OMCut$ fixed, let $A_1 = A_1(\Gamma)$, $E_1 = E_1(\Gamma)$, and $j = j(\Gamma)$. Let $\omega \in \Omega$ satisfy $\Gamma = \Break(\omega)$ and $\omega' \in T_1(\omega) = T_{1,j}(\omega)$. We show that we can recover $\omega$ from $\omega'$. By the definition of $T_{1,j}$, we have $\omega(v) = \omega'(v)$ for all $v \not\in A_1$ and $\omega(v) = \omega'(v - f_j)$ for all $v \in A_1$ such that $v - f_j \in A_1$. Finally, if $v \in A_1$ but $v - f_j \not\in A_1$, then $v \in E_1$ and $\omega(v) = 0$ by Proposition~\ref{prop:Gamma}.
\end{proof}

\subsubsection{The shift+erase transformations}
\label{sec:T2}

Next, we define the second expanding transformation $T_2 \colon \Omega \to \cP(\Fo)$. First, for every $j \in [2d]$ we define the transformation $T_{2,j} \colon \Omega \to \cP(\Fo)$ by letting $T_{2,j}(\omega)$ be the set of all configurations $\omega'$ of the form
\[
\omega'(v) =
\begin{cases}
  \Shift_j(\omega)(v) & \text{if $v \not\in E_{1,j} \cup E_{1,e}$}, \\
  \varepsilon_v & \text{if $v \in E_{1,j} \setminus E_{1,e}$}, \\
  0 & \text{if $v \in E_{1,e}$},
\end{cases}
\]
where $(\varepsilon_v)_v$ is an arbitrary $\{0,1\}$-sequence indexed by $E_{1,j} \setminus E_{1,e}$. Again, Propositions~\ref{prop:shift-feasible} and~\ref{prop:shift-gap} imply that each such $\omega'$ indeed belongs to $\Fo$. With the aim of computing $\muo(T_{2,j}(\omega))$, first let $X_j(\omega)$ denote the set of exposed vertices that are occupied in $\Shift_j(\omega)$, but $T_{2,j}(\omega)$ forces them to be vacant, i.e., $X_j(\omega) = \{v \in E_{1,e} \colon \omega(v + f_j) = 1\}$, and observe that $X_j(\omega) \subseteq E_{1,j,x}$ since if $v \in E_{1,e} \setminus E_{1,j,x}$, then $v + f_j \in E_0$ and hence $\omega(v + f_j) = 0$ by Proposition~\ref{prop:Gamma}. Now it is not hard to see that
\[
\muo(T_{2,j}(\omega)) = (1 + \lambda)^{|E_{1,j} \setminus E_{1,e}|} \lambda^{-|X_j(\omega)|} \muo(\omega).
\]
The transformation $T_2$ is defined by $T_2(\omega) = T_{2,j}(\omega)$, where $j = j(\Gamma) = j(\Break(\omega))$ is the smallest index $j$ that maximizes the quantity $|\Gamma_r^j| - 8|E_{1,j,x}|$. It follows that
\begin{equation}
  \label{ineq:muo-T2-omega}
  \muo(T_2(\omega)) = (1+\lambda)^{|E_{1,j} \setminus E_{1,e}|} \lambda^{-|X_j(\omega)|} \muo(\omega) = (1+\lambda)^{|\Gamma_r^j|} \lambda^{-|X_j(\omega)|} \muo(\omega).
\end{equation}
We close this section by showing how we can ``invert'' $T_2$ if we know an interior approximation $E$ to $\Break(\omega)$ (recall the definition of an interior approximation given in~\eqref{dfn:int_approx}) and the set $X(\omega) = X_{j(\Break(\omega))}(\omega)$. More precisely, we first prove that given $E$ and an $\omega' \in T_2(\omega)$, we can reconstruct $\Break(\omega)$ and then, if we additionally specify the set $X(\omega)$, then $\omega$ is uniquely determined.
\begin{prop}
  \label{prop:T2-recovery}
  For every $E \subseteq V(G)$ and $\omega' \in \Fo$, there is at most one $\Gamma \in \OMCut$ such that the following holds:
  \begin{enumerate}[(i)]
  \item
    \label{item:T2-recovery-i}
    If $\omega' \in T_2(\omega)$ for some $\omega \in \Omega$ such that $E$ is an interior approximation to $Break(\omega)$, then $\Break(\omega) = \Gamma$.
  \item
    \label{item:T2-recovery-ii}
    For every $X \subseteq V(G)$, there is at most one $\omega \in \Omega$ such that $\omega' \in T_2(\omega)$, $E$ is an interior approximation to $\Break(\omega)$, and $X_{j(\Gamma)}(\omega) = X$.
  \end{enumerate}
\end{prop}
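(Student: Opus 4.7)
The strategy is to recover $\Gamma$ from $(E, \omega')$ in (i), and then $\omega$ from the additional data $(\Gamma, X)$ in (ii). For part (i), I plan to define
\[
X^* := E \cup \{v \in \Vo \setminus E : \omega'(v) = 0\}
\]
and prove that the connected component of $B$ in $G \setminus X^*$ equals $A_0(\Gamma)$. Since $\Gamma$ is exactly the set of edges between $A_0(\Gamma)$ and $A_1(\Gamma) = V(G) \setminus A_0(\Gamma)$, this identifies $\Gamma$ uniquely and proves (i).

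The verification of this equality splits into three checks. First, $B \cap X^* = \emptyset$: $B \subseteq A_0(\Gamma)$ is disjoint from $E \subseteq A_1(\Gamma)$, and odd boundary vertices are occupied in $\omega$ by hypothesis and agree with $\omega'$ on $V(G) \setminus A_1(\Gamma)$, since that region is disjoint from $E_{1,j} \cup E_{1,e}$. Second, $A_0(\Gamma) \cap X^* = \emptyset$ and $A_0(\Gamma)$ is connected in $G \setminus X^*$: odd vertices of $A_0(\Gamma)$ are occupied in $\omega$ by the definition of $A_0(\Gamma)$ as the $B$-component of $G$ with the vacant odd vertices removed, and $\omega'$ agrees with $\omega$ there, while $A_0(\Gamma)$ is already connected in $G$. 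Third, and most critically, $E_1(\Gamma) \subseteq X^*$: the interior-approximation hypothesis places $E_1(\Gamma) \setminus E_{1,e}(\Gamma)$ inside $E$, while the third branch of the definition of $T_{2,j}$ forces $\omega'(v) = 0$ for every $v \in E_{1,e}(\Gamma)$ (and $E_1(\Gamma) \subseteq \Vo$). Any path from $A_1(\Gamma)$ to $B$ in $G$ must cross an edge of $\Gamma$ and so visit $E_1(\Gamma) \subseteq X^*$, so no vertex of $A_1(\Gamma)$ survives in the $B$-component of $G \setminus X^*$.

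For part (ii), once $\Gamma$ is known so are $A_1$, $E_1$, $E_{1,e}$, $E_{1,j}$, and $j = j(\Gamma)$, and I reconstruct $\omega$ pointwise. For $v \notin A_1$ one has $E_{1,j} \cup E_{1,e} \subseteq E_1 \subseteq A_1$, so the first branch of $T_{2,j}$ applies and $\omega(v) = \omega'(v)$. For $v \in A_1$ with $v - f_j \notin A_1$, the edge $\{v - f_j, v\}$ lies in $\Gamma$, hence $v \in E_1$, and Proposition~\ref{prop:Gamma} forces $\omega(v) = 0$. For $v \in A_1$ with $v - f_j \in A_1$ one has $\omega(v) = \Shift_j(\omega)(v - f_j)$; the subcase $v - f_j \in E_{1,j}$ is impossible since it would put $v \in A_0$; if $v - f_j \notin E_{1,j} \cup E_{1,e}$ the first branch of $T_{2,j}$ gives $\omega(v) = \omega'(v - f_j)$; and if $v - f_j \in E_{1,e}$ then by definition $v - f_j \in X_j(\omega) = X$ iff $\omega(v) = 1$.

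The main difficulty is in part (i): the interior approximation $E$ need not equal $A_1(\Gamma)$ — it may omit exposed vertices of $E_1(\Gamma)$ and include extra vertices of $A_1(\Gamma) \setminus E_1(\Gamma)$ — so neither $E$ nor $\omega'$ alone pinpoints $A_1(\Gamma)$. The crux is that the third branch of $T_{2,j}$ systematically zeroes out all of $E_{1,e}(\Gamma)$, which dovetails with the interior-approximation hypothesis to yield $E_1(\Gamma) \subseteq X^*$ and thereby cuts $A_1(\Gamma)$ away from $B$ in $G \setminus X^*$.
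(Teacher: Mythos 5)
Your proof is correct and follows essentially the same approach as the paper. The only notable organizational difference is in part (i): where the paper constructs the configuration $\omega''$ (equal to $\omega'$ off $E$ and zero on $E$), observes that it is an interior modification of $\omega$, and invokes Proposition~\ref{prop:interior-mod}, you instead build the vertex set $X^*$ and verify directly that the $B$-component of $G \setminus X^*$ is $A_0(\Gamma)$ — which amounts to re-deriving the content of Proposition~\ref{prop:interior-mod} inline (the even vertices of $E$ that end up in your $X^*$ but not in the set of vacant odd vertices are harmless, as they lie in $A_1(\Gamma)$). Your part (ii) reconstruction matches the paper's case analysis essentially verbatim.
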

\begin{proof}
  Let $\omega \in \Omega$ be any configuration such that $\omega' \in T_2(\omega)$. We first show (i), i.e., that we can recover $\Break(\omega)$ if we know that $E$ is an interior approximation to it. To see this, define a configuration $\omega''$ by
  \[
  \omega''(v) =
  \begin{cases}
    \omega'(v) & \text{if $v \not\in E$}, \\
    0 & \text{if $v \in E$},
  \end{cases}
  \]
  and note that, letting $A_0 = A_0(\Break(\omega))$, $A_1 = A_1(\Break(\omega))$, $E_1 = E_1(\Break(\omega))$, and $E_{1,e} = E_{1,e}(\Break(\omega))$,
  \begin{equation}
    \label{eq:omega''omega}
    \omega''|_{A_0} = \omega'|_{A_0} = \omega|_{A_0} \quad \text{and} \quad \omega''|_{E_1} = 0 = \omega|_{E_1}.
  \end{equation}
  The first identity in~\eqref{eq:omega''omega} follows from the fact that $E \subseteq A_1$; the second identity in~\eqref{eq:omega''omega} follows since $\omega'(v) = 0$ for every $v \in E_{1,e}$ by the definition of $T_2(\omega)$ and since $E_1 \setminus E_{1,e} \subseteq E$. The identities~\eqref{eq:omega''omega} imply that $\omega''$ is an interior modification of $\omega$ (recall the definition of an interior modification given in Section~\ref{sec:locating_cutset}) and hence $\Break(\omega) = \Break(\omega'')$ by Proposition~\ref{prop:interior-mod}.

  In order to see (ii), assume that $\omega' \in T_2(\omega)$ for some $\omega \in \Omega$ such that $E$ is an interior approximation to $\Break(\omega)$ and $X(\omega) = X$. We show that we can recover $\omega$ from $\omega'$, $E$, and $X$. By (i), there is a unique $\Gamma \in \OMCut$ such that $\Gamma = \Break(\omega)$. Let $j = j(\Gamma)$ be such that $T_2(\omega) = T_{2,j}(\Omega)$. Furthermore, let $A_1 = A_1(\Gamma)$, $E_1 = E_1(\Gamma)$, $E_{1,e} = E_{1,e}(\Gamma)$, and $E_{1,j,x} = E_{1,j,x}(\Gamma)$. By the definition of $T_{2,j}$, we have $\omega(v) = \omega'(v)$ for all $v \not\in A_1$ and $\omega(v) = \omega'(v - f_j)$ for all $v \in A_1$ such that $v - f_j \in A_1 \setminus E_{1,e}$. If $v \in A_1$ but $v - f_j \not\in A_1$, then $v \in E_1$ and hence $\omega(v) = 0$ by Proposition~\ref{prop:Gamma}. Finally, if $v \in A_1$ and $v - f_j \in E_{1,e}$, then $\omega(v) = 1$ if $v - f_j \in X$ and $\omega(v) = 0$ otherwise.
\end{proof}

\subsection{Proof of Theorems~\ref{thm:GaKa-weak} and~\ref{thm:main}}

As remarked in the outline, we will split the bad event $\Omega$ into two parts, depending on the ``regularity'' of $\Break(\omega)$. A cutset $\Gamma \in \OMCut$ is ``irregular'' if the ratio $|\Gamma_r| / |\Gamma|$ is ``small'', i.e., if a vast majority of the edges of $\Gamma$ are incident to exposed vertices; otherwise, $\Gamma$ is ``regular''. As the precise meaning of ``small'' (and hence the resulting partition) will be different in the proofs of Theorems~\ref{thm:GaKa-weak} and \ref{thm:main}, we will define a family of such partitions in what might first seem to be unnecessary generality. We fix a non-negative real $\beta$ and let
\[
\Omega_1^\beta = \{\omega \in \Omega \colon |\Gamma_r| < 12|\Gamma|/d^\beta\} \quad \text{and} \quad \Omega_2^\beta = \Omega \setminus \Omega_1^\beta,
\]
where in the above definition $\Gamma$ stands for $\Break(\omega)$. In order to prove Theorems~\ref{thm:GaKa-weak} and~\ref{thm:main}, we will find a $\beta$ such that both $\muo(\Omega_1^\beta)$ and $\muo(\Omega_2^\beta)$ are small under the appropriate assumption on $\lambda$. In particular, Theorem~\ref{thm:GaKa-weak} will easily follow from the following two statements when we set $\beta = 1/4$.

\begin{thm}
  \label{thm:Omega1}
  There exist constants $C$ and $d_0$ such that for every $\beta \in [1/10, 1/2]$, if $d \geq d_0$ and $\lambda \geq Cd^{-\beta}(\log d)^2$, then
  \[
  \muo(\Omega_1^\beta) \leq (1+\lambda)^{-d/4}.
  \]
\end{thm}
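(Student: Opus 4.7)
The plan is to execute the union bound strategy outlined in Section~\ref{sec:outline}, using Theorem~\ref{thm:OMCutMR} to count irregular odd cutsets and inequality~\eqref{ineq:Break_is_Gamma} (which follows from the shift transformation $T_1$) to bound the $\muo$-mass contributed by each. The starting point is
\begin{equation*}
  \muo(\Omega_1^\beta) \leq \sum_{\Gamma \in \OMCut \colon |\Gamma_r| < 12|\Gamma|/d^\beta} \muo(\Break(\omega) = \Gamma) \leq \sum_{\Gamma} (1+\lambda)^{-|\Gamma|/(2d)},
\end{equation*}
so the task reduces to showing that the number of irregular odd cutsets of size $L$ is small enough to be absorbed by the exponential decay $(1+\lambda)^{-L/(2d)}$.

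The key geometric observation is that for an irregular cutset $\Gamma$, the regularity parameter $\RG(E_1(\Gamma))$ is very small. I would split $E_1 = E_{1,e} \cup (E_1 \setminus E_{1,e})$ and estimate each contribution separately. Every $v \in E_{1,e}$ has $\PG(v) \geq 2d - \sqrt{d}$, so $\RG(v) = 2d - \PG(v) \leq \sqrt{d}$. Since $\sum_{v \in E_1} \PG(v) = |\Gamma|$ (each edge of $\Gamma$ has a unique endpoint in $E_1$), the definition of $\Gamma_r$ gives $\sum_{v \in E_{1,e}} \PG(v) = |\Gamma| - |\Gamma_r|$, whence $|E_{1,e}| \leq (|\Gamma|-|\Gamma_r|)/(2d-\sqrt{d})$ and therefore $\sum_{v \in E_{1,e}} \RG(v) \leq |\Gamma|/\sqrt{d}$ for $d$ large. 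For the non-exposed vertices we use the trivial bound $\RG(v) \leq \PG(v)$, giving $\sum_{v \in E_1 \setminus E_{1,e}} \RG(v) \leq |\Gamma_r| < 12|\Gamma|/d^\beta$. Combining these and using $\beta \leq 1/2$ yields the crucial estimate $\RG(E_1(\Gamma)) \leq 13|\Gamma|/d^\beta$ for every irregular $\Gamma$.

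By Theorem~\ref{thm:OMCutMR}, the number of irregular odd cutsets with $|\Gamma| = L$ is at most
\begin{equation*}
  (L+1)(13L/d^\beta+1) \cdot \exp\!\left(\frac{13 C (\log d)^2}{d^{1+\beta}} L\right),
\end{equation*}
since the pair $(|E_1(\Gamma)|, \RG(E_1(\Gamma)))$ ranges over at most polynomially many values. Using $\log(1+\lambda) \geq \lambda/2$ for $\lambda \leq 1$, the hypothesis $\lambda \geq C' d^{-\beta}(\log d)^2$ with $C'$ sufficiently large guarantees that $(1+\lambda)^{-L/(2d)}$ dominates the cutset count by at least a factor $\exp(-L\lambda/(8d))$, and the polynomial prefactor in $L$ is easily absorbed since $d\lambda$ grows at least as $d^{1/2}(\log d)^2$ under the hypothesis on $\beta$. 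Summing the resulting geometric series over $L \geq 2d(2d-1)$, where the lower bound comes from Proposition~\ref{prop:Gamma-size}, yields a total of order $\exp(-d\lambda/2)$ (up to lower order factors), which is comfortably below the target $(1+\lambda)^{-d/4} \approx \exp(-d\lambda/4)$. The only delicate point is the bookkeeping in the middle paragraph---tracking the interplay between $|E_{1,e}|$, the exposedness threshold $2d-\sqrt{d}$, and the irregularity hypothesis on $|\Gamma_r|$---but once the right inequalities are isolated this is routine.
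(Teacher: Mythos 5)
Your proof is correct and follows essentially the same route as the paper's: the per-cutset bound $\muo(\Break(\omega)=\Gamma)\le(1+\lambda)^{-|\Gamma|/(2d)}$ obtained from the shift transformation $T_1$ together with Lemma~\ref{lemma:double-counting}, the key estimate $\RG(E_1(\Gamma))\le 13|\Gamma|/d^\beta$ for irregular cutsets derived by splitting $E_1$ into exposed and non-exposed vertices, Theorem~\ref{thm:OMCutMR} to count cutsets with a given regularity parameter, and a summation in which the hypothesis $\lambda\ge C'd^{-\beta}(\log d)^2$ absorbs both the entropy factor and the polynomial prefactors. The only cosmetic difference is that the paper partitions $\Omega_1^\beta$ by $(|E_1(\Gamma)|,\RG(E_1(\Gamma)))$ and applies the double-counting lemma once per class, whereas you apply it per fixed cutset and regroup by $L=|\Gamma|$; the two orderings are equivalent.
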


\begin{thm}
  \label{thm:Omega2}
  There exist constants $C$ and $d_0$ such that for every $\beta \in [0, 2/5]$, if $d \geq d_0$ and $\lambda \geq Cd^{\beta-1/2}(\log d)^2$, then
  \[
  \muo(\Omega_2^\beta) \leq (1+\lambda)^{-d^{1-\beta}/4}.
  \]
\end{thm}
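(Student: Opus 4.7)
The plan is to bound $\muo(\Omega_2^\beta)$ by partitioning odd cutsets according to their size $L$ and an interior approximation $E$ drawn from the family provided by Theorem~\ref{thm:OMCut-approx}, applying the shift-and-erase transformation $T_2$ class by class via the recovery statement of Proposition~\ref{prop:T2-recovery}, and finally union bounding.

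The first step is the arithmetic estimate that, for every $\omega\in\Omega_2^\beta$ with $\Gamma=\Break(\omega)$, $L=|\Gamma|$ and $j=j(\Gamma)$,
\[
|\Gamma_r^j|-|E_{1,j,x}(\Gamma)|\geq\frac{4L}{d^{1+\beta}}.
\]
Indeed, $\sum_{i=1}^{2d}|E_{1,i,x}(\Gamma)|=\sum_{v\in E_{1,e}}(2d-P_\Gamma(v))\leq\sqrt{d}\,|E_{1,e}|\leq L/\sqrt{d}$, the first inequality because $P_\Gamma(v)\geq 2d-\sqrt{d}$ on $E_{1,e}$ and the second because $|E_{1,e}|(2d-\sqrt{d})\leq\sum_{v\in E_{1,e}}P_\Gamma(v)\leq L$. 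Combined with the defining property $|\Gamma_r|\geq 12L/d^\beta$ of $\Omega_2^\beta$, this gives $\sum_i(|\Gamma_r^i|-8|E_{1,i,x}|)\geq 12L/d^\beta-8L/\sqrt{d}\geq 8L/d^\beta$ for $\beta\leq 2/5$ and $d$ large. Since $j$ maximizes this quantity over $[2d]$, averaging yields $|\Gamma_r^j|-8|E_{1,j,x}|\geq 4L/d^{1+\beta}$, which implies the displayed inequality.

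Next, for fixed $L\geq 2d(2d-1)$ (Proposition~\ref{prop:Gamma-size}) and each $E$ in the family $\E(L)$ from Theorem~\ref{thm:OMCut-approx}, set $A(E,L):=\{\omega\in\Omega_2^\beta:|\Break(\omega)|=L\text{ and }E\text{ is an interior approximation to }\Break(\omega)\}$. Using equation~\eqref{ineq:muo-T2-omega} and interchanging the order of summation,
\[
\muo(A(E,L))=\sum_{\omega'\in\Fo}\muo(\omega')\sum_{\substack{\omega\in A(E,L)\\\omega'\in T_2(\omega)}}\lambda^{|X_j(\omega)|}(1+\lambda)^{-|\Gamma_r^j|},
\]
where $j=j(\Break(\omega))$. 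By Proposition~\ref{prop:T2-recovery}(i), for a fixed $\omega'$ all $\omega$ appearing in the inner sum share the same cutset $\Gamma$, and hence the same $j$, $|\Gamma_r^j|$ and $E_{1,j,x}(\Gamma)$; by part~(ii) each subset $X\subseteq E_{1,j,x}(\Gamma)$ is realized as $X_j(\omega)$ for at most one such $\omega$. The inner sum is therefore at most
\[
(1+\lambda)^{-|\Gamma_r^j|}\sum_{X\subseteq E_{1,j,x}(\Gamma)}\lambda^{|X|}=(1+\lambda)^{|E_{1,j,x}(\Gamma)|-|\Gamma_r^j|}\leq(1+\lambda)^{-4L/d^{1+\beta}}
\]
by Step~1, and summing the outer factor $\muo(\omega')$ over $\omega'\in\Fo$ (which gives at most $1$) yields $\muo(A(E,L))\leq(1+\lambda)^{-4L/d^{1+\beta}}$.

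Finally, union bounding over $E\in\E(L)$ and $L$, using $|\E(L)|\leq\exp(C_0(\log d)^2L/d^{3/2})$ from Theorem~\ref{thm:OMCut-approx},
\[
\muo(\Omega_2^\beta)\leq\sum_{L\geq 2d(2d-1)}\exp\!\left(\frac{C_0(\log d)^2L}{d^{3/2}}-\frac{4L\log(1+\lambda)}{d^{1+\beta}}\right).
\]
For $\lambda\geq Cd^{\beta-1/2}(\log d)^2$ with $C$ sufficiently large (say $C\geq 2C_0$), the second term dominates twice the first, so the exponent is linearly decreasing in $L$ at rate $\geq 3\log(1+\lambda)/d^{1+\beta}$; evaluated at $L_0=2d(2d-1)\geq 3d^2$, the geometric series is of order $\exp(-cd^{1-\beta}\log(1+\lambda))$ with $c>1/4$ up to a polynomial prefactor in $d$, and this comfortably dominates the target $(1+\lambda)^{-d^{1-\beta}/4}$ for $d$ large. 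The main technical point---the raison d'{\^e}tre of choosing $j$ to maximize $|\Gamma_r^i|-8|E_{1,i,x}|$ rather than merely $|\Gamma_r^i|$---is that the binomial cost $\sum_X\lambda^{|X|}=(1+\lambda)^{|E_{1,j,x}|}$ of enumerating the possible sets $X_j(\omega)$ in the double count is then absorbed into the shift gain $(1+\lambda)^{|\Gamma_r^j|}$ with a net surplus of order $L/d^{1+\beta}$, which is precisely what is needed to beat the interior-approximation entropy $C_0(\log d)^2L/d^{3/2}$.
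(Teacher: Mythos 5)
Your proof is correct and follows essentially the same route as the paper: the key pivot point $j(\Gamma)$ maximizing $|\Gamma_r^j|-8|E_{1,j,x}|$, the interior-approximation family from Theorem~\ref{thm:OMCut-approx}, the recovery properties of $T_2$ from Proposition~\ref{prop:T2-recovery}, and a double-counting argument. The one notable tidying is that you sum $\lambda^{|X|}$ over $X\subseteq E_{1,j,x}(\Gamma)$ exactly to get $(1+\lambda)^{|E_{1,j,x}|}$, whereas the paper parametrizes additionally by $r$ and $x=|X_j(\omega)|$, bounds the multiplicity by $\binom{\lfloor r/8\rfloor}{x}$, and then needs a case split on $\lambda\gtrless 1$ to kill $\binom{\lfloor r/8\rfloor}{x}\lambda^x(1+\lambda)^{-r/2-4x}$; your direct binomial-theorem computation avoids that split and arrives at the same conclusion.
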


The proofs of Theorems~\ref{thm:Omega1} and~\ref{thm:Omega2} follow in a straightforward manner from the results established so far plus Theorems~\ref{thm:OMCutMR} and~\ref{thm:OMCut-approx}, which come from~\cite{Pe}. In order to prove Theorem~\ref{thm:main}, we will need the refined interior approximation theorem, Theorem~\ref{thm:OMCuteps-approx}, which allows us to improve the bound on $\muo(\Omega_2^\beta)$ given by Theorem~\ref{thm:Omega2} and whose proof we postponed till Section~\ref{sec:OMCut-proof}. More precisely, Theorem~\ref{thm:main} will follow from Theorem~\ref{thm:Omega1} and the following statement (whose proof relies on Theorem~\ref{thm:OMCuteps-approx}) when we set $\beta = 1/3$.

\begin{thm}
  \label{thm:Omega2-new}
  There exist constants $C$ and $d_0$ such that for every $\beta \in [0,1/2]$, if $d \geq d_0$ and $\lambda \geq Cd^{(\beta-1)/2}(\log d)^2$, then
  \[
  \muo(\Omega_2^\beta) \leq (1+\lambda)^{-d^{1-\beta}/48}.
  \]
\end{thm}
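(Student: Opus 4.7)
The plan is to follow the same overall architecture as the proof of Theorem~\ref{thm:Omega2}, but to invoke the sharper, $\varepsilon$-sensitive interior approximation theorem (Theorem~\ref{thm:OMCuteps-approx}) in place of its weaker counterpart (Theorem~\ref{thm:OMCut-approx}), after stratifying $\Omega_2^\beta$ by the density $|\Gamma_r|/|\Gamma|$. For $\omega \in \Omega_2^\beta$ with $\Gamma = \Break(\omega)$, write $L = |\Gamma|$ and let $\varepsilon$ be the largest dyadic value with $\Gamma \in \OMCut(\varepsilon)$; the definition of $\Omega_2^\beta$ forces $\varepsilon \geq 6/d^\beta$, and since $\beta \leq 1/2$ this gives $\varepsilon \geq d^{-1/2}$, so Theorem~\ref{thm:OMCuteps-approx} supplies a family $\E = \E(L,\varepsilon)$ of interior approximations with $|\E| \leq \exp(C\sqrt{\varepsilon}(\log d)^2 L / d^{3/2})$.

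The key geometric estimate is that for every such $\Gamma$, the index $j = j(\Gamma)$ defining $T_2$ satisfies $|\Gamma_r^j| - |E_{1,j,x}(\Gamma)| \geq \varepsilon L/(8d)$. Indeed, the maximality of $j$ gives
\[
|\Gamma_r^j| - 8|E_{1,j,x}| \;\geq\; \frac{1}{2d}\biggl(|\Gamma_r| - 8\sum_{i=1}^{2d}|E_{1,i,x}|\biggr),
\]
and two elementary observations---$(2d-\sqrt{d})|E_{1,e}| \leq |\Gamma| = L$ because each exposed vertex carries at least $2d-\sqrt{d}$ edges of $\Gamma$, and $\sum_i |E_{1,i,x}| \leq \sqrt{d}\,|E_{1,e}|$ because each $v \in E_{1,e}$ belongs to at most $\sqrt{d}$ of the sets $E_{1,i,x}$---yield $\sum_i |E_{1,i,x}| \leq L/\sqrt{d}$ for large $d$. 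Combining with $|\Gamma_r| \geq \max(\varepsilon L, 12L/d^\beta)$ and a short case analysis on whether $\varepsilon \geq 16/\sqrt{d}$ (valid because $\beta \leq 1/2$), one obtains $|\Gamma_r| - 8L/\sqrt{d} \geq \varepsilon L/4$, hence $|\Gamma_r^j| - 8|E_{1,j,x}| \geq \varepsilon L/(8d)$, and in particular $|\Gamma_r^j| - |E_{1,j,x}| \geq \varepsilon L/(8d)$.

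For the probabilistic step, fix $E \in \E$ and let $A_E \subseteq \Omega_2^\beta$ be the set of $\omega$ with $\Break(\omega) \in \OMCut(\varepsilon)$, $|\Break(\omega)| = L$, and $E$ an interior approximation to $\Break(\omega)$. Rewriting $\muo(\omega) = \lambda^{|X_j(\omega)|}(1+\lambda)^{-|\Gamma_r^j|}\muo(T_2(\omega))$ via~\eqref{ineq:muo-T2-omega} and interchanging the order of summation,
\[
\muo(A_E) \;=\; \sum_{\omega' \in \Fo}\muo(\omega')\sum_{\omega \in A_E,\,\omega' \in T_2(\omega)} \lambda^{|X_j(\omega)|}(1+\lambda)^{-|\Gamma_r^j|}.
\]
For each fixed $\omega'$, Proposition~\ref{prop:T2-recovery}(i) determines $\Gamma$ (and hence $j$, $|\Gamma_r^j|$, $E_{1,j,x}$) from $(E,\omega')$, while part~(ii) puts the preimages in bijection with $X \subseteq E_{1,j,x}$, so the inner sum equals $(1+\lambda)^{|E_{1,j,x}|-|\Gamma_r^j|} \leq (1+\lambda)^{-\varepsilon L/(8d)}$ by the geometric estimate. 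Hence $\muo(A_E) \leq (1+\lambda)^{-\varepsilon L/(8d)}$.

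A union bound over $E \in \E$, dyadic $\varepsilon \in [6/d^\beta, 1/2]$, and $L \geq 2d(2d-1)$ (Proposition~\ref{prop:Gamma-size}) then produces
\[
\muo(\Omega_2^\beta) \;\leq\; \sum_{L,\varepsilon}\exp\!\left(\frac{C\sqrt{\varepsilon}(\log d)^2L}{d^{3/2}}\right)(1+\lambda)^{-\varepsilon L/(8d)}.
\]
The hypothesis $\lambda \geq Cd^{(\beta-1)/2}(\log d)^2$ (with a sufficiently large $C$) gives $\log(1+\lambda) \geq c(\log d)^2/(\sqrt{\varepsilon}\,d^{1/2})$ in the worst case $\varepsilon = 6/d^\beta$, absorbing the exponential prefactor into half the decay, so each term is at most $(1+\lambda)^{-\varepsilon L/(16d)}$. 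Summing the geometric series in $L$ starting at $L_0 = 2d(2d-1)$ and over the $O(\log d)$ dyadic values of $\varepsilon$ produces only polynomial-in-$d$ corrections, which are dwarfed by the leading exponential $(1+\lambda)^{-\varepsilon L_0/(16d)}$ (whose exponent is $\gtrsim d^{1-\beta}$), yielding $\muo(\Omega_2^\beta) \leq (1+\lambda)^{-d^{1-\beta}/48}$. The main obstacle is calibrating the constants so that the $\sqrt{\varepsilon}$ savings in Theorem~\ref{thm:OMCuteps-approx} match the $(1-\beta)/2$-gain in $\lambda$; this balance is precisely what pushes $\lambda(d)$ from $d^{-1/4}$ (as in Theorem~\ref{thm:GaKa-weak}) to $d^{-1/3}$, up to polylog factors.
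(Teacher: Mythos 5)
Your proof is correct and follows essentially the same route as the paper: stratify $\Omega_2^\beta$ by the dyadic $\varepsilon$ with $\Break(\omega)\in\OMCut(\varepsilon)$, apply Theorem~\ref{thm:OMCuteps-approx} at each scale, use the geometric estimate on $|\Gamma_r^{j(\Gamma)}|-8|E_{1,j(\Gamma),x}|$ together with $T_2$ and Proposition~\ref{prop:T2-recovery}, and calibrate $\lambda$ against the worst case $\varepsilon\asymp d^{-\beta}$. The only real difference is bookkeeping: where the paper further partitions by $(L,r,x)$, bounds preimage counts by $\binom{\lfloor r/8\rfloor}{x}$, and invokes Lemma~\ref{lemma:double-counting} with a separate numeric check that $\binom{\lfloor r/8\rfloor}{x}\lambda^x(1+\lambda)^{-r/2-4x}\le 1$, you interchange the order of summation directly and collapse the sum over admissible $X\subseteq E_{1,j,x}$ into the factor $(1+\lambda)^{|E_{1,j,x}|}$, which absorbs the $x$-dependence in one stroke and yields the same $(1+\lambda)^{-\varepsilon L/(8d)}$-type decay (vs.\ the paper's $(1+\lambda)^{-\varepsilon L/(24d)}$, a harmless constant change).
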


\subsubsection{Proof of Theorem~\ref{thm:Omega1}}

We start by further partitioning the event $\Omega_1^\beta$. Recall the definition of $\RG$ from Section~\ref{sec:counting-odd-cutsets} and for integers $M$ and $R$, let
\[
\Omega_{1,M,R}^\beta = \{\omega \in \Omega_1^\beta \colon |E_1(\Gamma)| = M \text{ and } \RG(E_1(\Gamma)) = R\},
\]
where, as usual, $\Gamma = \Break(\omega)$. Since $\omega \in \Omega_1^\beta$, we also have that
\begin{align*}
  R & = \RG(E_1) = \sum_{v \in E_{1,e}} \RG(v) + \sum_{v \in E_1 \setminus E_{1,e}} \RG(v) \leq \sum_{v \in E_{1,e}} (2d - \PG(v)) + \sum_{v \in E_1 \setminus E_{1,e}} \PG(v) \\
  & \leq |E_{1,e}| \cdot \sqrt{d} + |\Gamma_r| \leq \frac{|\Gamma|}{2d - \sqrt{d}} \cdot \sqrt{d} + 12|\Gamma|/d^\beta \leq 13|\Gamma|/d^\beta,
\end{align*}
where the last inequality follows from our assumption that $\beta \leq 1/2$. It follows that $|\Gamma|/(4d) \geq Rd^{\beta-1}/52$ and hence, recalling the properties of $T_1$, namely inequality~(\ref{ineq:muo-T1-omega}), for every $\omega \in \Omega_{1,M,R}^\beta$,
\begin{equation}
  \label{ineq:muo-T1-omega-bMR}
  \muo(T_1(\omega)) \geq (1+\lambda)^{|\Gamma|/(4d) + Rd^{\beta-1}/52}\muo(\omega) \geq (1+\lambda)^{d/2 + Rd^{\beta-1}/52}\muo(\omega),
\end{equation}
where the last inequality follows from the fact that $|\Gamma| \geq 2d^2$, see Proposition~\ref{prop:Gamma-size}. On the other hand, by Proposition~\ref{prop:T1-recovery}, for every $\Gamma \in \OMCut$ and $\omega' \in \Fo$, there is at most one $\omega \in \Omega$ satisfying $\Break(\omega) = \Gamma$ and $\omega' \in T_1(\omega)$. It follows that there is a constant $C$ such that for every $\omega' \in \Fo$,
\begin{equation}
  \label{ineq:repetitionT1}
  |\{\omega \in \Omega_{1,M,R}^\beta \colon \omega' \in T_1(\omega) \}| \leq |\OMCut(M,R)| \leq \exp\left( \frac{C(\log d)^2}{d} R \right),
\end{equation}
where the last inequality follows from Theorem~\ref{thm:OMCutMR}. Inequalities~\eqref{ineq:muo-T1-omega-bMR} and~\eqref{ineq:repetitionT1} and Lemma~\ref{lemma:double-counting} imply that
\[
\muo(\Omega^\beta_{1,M,R}) \leq \exp\left( \frac{C(\log d)^2}{d} R \right) (1+\lambda)^{-d/2 - Rd^{\beta-1}/52}.
\]

Now we are ready to estimate $\muo(\Omega_1^\beta)$. Since $\RG(E_1(\Gamma)) \geq |E_1(\Gamma)| \geq 1$ for every $\Gamma \in \OMCut$ (since $1 \leq \PG(v) \leq 2d-1$ for every $v \in E_1(\Gamma)$ by Proposition~\ref{prop:Gamma-non-trivial}), we have that
\[
\Omega_1^\beta = \bigcup_{R=1}^\infty \bigcup_{M=1}^R \Omega_{1,M,R}^\beta
\]
and hence by the union bound,
\begin{equation}
  \label{ineq:Omega1b-union}
  \muo(\Omega_1^\beta) \leq \sum_{R=1}^\infty \sum_{M=1}^R \muo(\Omega_{1,M,R}^\beta) \leq \sum_{R = 1}^\infty R \cdot \exp\left( \frac{C(\log d)^2}{d} R \right) (1+\lambda)^{-d/2 - Rd^{\beta-1}/52}.
\end{equation}
In order to estimate the right-hand side of~\eqref{ineq:Omega1b-union}, first observe that
\[
(1+\lambda)^{Rd^{\beta-1}/156} \geq \exp\left( \frac{C(\log d)^2}{d} R \right)
\]
provided that $d$ is sufficiently large and $\lambda \geq C'd^{-\beta}(\log d)^2$ for some large positive constant $C'$. It follows that
\begin{equation}
  \label{ineq:Omega1b-union'}
  \muo(\Omega_1^\beta) \leq \sum_{R = 1}^{\infty} R(1+\lambda)^{-d/2 - Rd^{\beta-1}/78} \leq 2(1+\lambda)^{-d/3},
\end{equation}
provided that $d$ is sufficiently large and $\lambda \geq
C'd^{-\beta}(\log d)^2$. To see that the last inequality
in~\eqref{ineq:Omega1b-union'} holds, one might split the above sum
into ranges $R \leq d^2$ and $R > d^2$ and estimate each of the
parts separately. We conclude that if $d$ is sufficiently large and
$\lambda \geq C'd^{-\beta}(\log d)^2$ for some large positive
constant $C'$, then $\muo(\Omega_1^\beta) \leq (1+\lambda)^{-d/4}$.

\subsubsection{Proof of Theorem~\ref{thm:Omega2}}

Fix an $\omega \in \Omega_2^\beta$. As usual, we let $\Gamma = \Break(\omega)$, let $E_1 = E_1(\Gamma)$, $E_{1,e} = E_{1,e}(\Gamma)$, and for each $j \in [2d]$, let $E_{1,j} = E_{1,j}(\Gamma)$ and $E_{1,j,x} = E_{1,j,x}(\Gamma)$. Recall that each exposed vertex is adjacent to at most $\sqrt{d}$ vertices that are in $A_1(\Gamma)$ and hence
\begin{equation}
  \label{eq:sumE1jx}
  \sum_{j=1}^{2d} |E_{1,j,x}| \leq |E_{1,e}| \cdot \sqrt{d} \leq \frac{|\Gamma|}{2d - \sqrt{d}}\cdot\sqrt{d} \leq \frac{|\Gamma|}{\sqrt{d}}.
\end{equation}
Moreover, since $|\Gamma_r| \geq 12|\Gamma|/d^\beta \geq 12|\Gamma|/\sqrt{d}$ by our assumption on $\beta$, then~\eqref{eq:sumE1jx} implies that
\[
\sum_{j=1}^{2d} (|\Gamma_r^j| - 8|E_{1,j,x}|) = |\Gamma_r| - 8\sum_{j=1}^{2d}|E_{1,j,x}| \geq |\Gamma_r|/3 \geq 4|\Gamma|/d^\beta.
\]
Recall from the definition of $T_2$ that $j(\Gamma)$ is the smallest index that maximizes the quantity $|\Gamma_r^j| - 8|E_{1,j,x}|$. It follows that
\begin{equation}
  \label{ineq:maxj}
  |\Gamma_r^{j(\Gamma)}| - 8|E_{1,j(\Gamma),x}| \geq |\Gamma_r|/(6d) \geq 2L/d^{1+\beta}.
\end{equation}
Next, we further partition the event $\Omega_2^\beta$. For integers $L$, $r$, and $x$, let
\[
\Omega_{2,L,r,x}^\beta = \{\omega \in \Omega_2^\beta \colon |\Gamma| = L, |\Gamma_r^{j(\Gamma)}| = r, \text{ and } |X_{j(\Gamma)}(\omega)| = x\},
\]
where $\Gamma = \Break(\omega)$. Now assume that $\omega \in \Omega_{2,L,r,x}^\beta$. Recalling the properties of $T_2$, namely~\eqref{ineq:muo-T2-omega}, note that since $T_2(\omega) = T_{2,j(\Gamma)}(\omega)$, then we have
\[
\muo(T_2(\omega)) \geq (1+\lambda)^{|E_{1,j(\Omega)} \setminus E_{1,e}|} \lambda^{-|X(\omega)|} \muo(\omega) = (1+\lambda)^r \lambda^{-x} \muo(\omega).
\]
Since $X_j(\omega) \subseteq E_{1,j,x}$, then $r \geq 8x + 2L/d^{1+\beta}$ by \eqref{ineq:maxj} and therefore
\begin{equation}
  \label{ineq:muo-T2-omega-bLrx}
  \muo(T_2(\omega)) \geq (1+\lambda)^{r/2+4x+{L/d^{1+\beta}}}\lambda^{-x}\muo(\omega).
\end{equation}
On the other hand, by Proposition~\ref{prop:T2-recovery}~(\ref{item:T2-recovery-i}), for every $E \subseteq V(G)$ and $\omega' \in \Fo$, there is at most one $\Gamma \in \OMCut$ such that $E$ is an interior approximation to $\Gamma$ and $\Gamma = \Break(\omega)$ for every $\omega \in \Omega$ satisfying $\omega' \in T_2(\omega)$. Moreover, by Proposition~\ref{prop:T2-recovery}~(\ref{item:T2-recovery-ii}), if we additionally specify $X \subseteq V(G)$ and require that $X_{j(\Gamma)}(\omega) = X$, then $\omega$ is uniquely determined. Crucially, since $X_j(\omega) \subseteq E_{1,j,x}(\Gamma)$, then we can assume that $X \subseteq E_{1,j,x}(\Gamma)$. Finally, since $|E_{1,j(\Gamma),x}(\Gamma)| \leq r/8$, then Theorem~\ref{thm:OMCut-approx} implies that there is a constant $C$ such that for every $\omega' \in \Fo$,
\begin{equation}
  \label{ineq:repetitionT2}
  |\{\omega \in \Omega_{2,L,r,x}^\beta \colon \omega' \in T_2(\omega) \}| \leq \exp\left( \frac{C(\log d)^2}{d^{3/2}} L \right) {\lfloor r/8 \rfloor \choose x}.
\end{equation}
Inequalities \eqref{ineq:muo-T2-omega-bLrx} and \eqref{ineq:repetitionT2} and Lemma~\ref{lemma:double-counting} imply that
\begin{equation}
  \label{ineq:muo-T2-Omega2Lrx}
  \muo(\Omega_{2,L,r,x}^\beta) \leq \exp\left( \frac{C(\log d)^2}{d^{3/2}} L \right) {\lfloor r/8 \rfloor \choose x}\lambda^x (1+\lambda)^{-r/2-4x-{L/d^{1+\beta}}}.
\end{equation}
Next, note that if $\lambda \geq 1$, then
\[
(1+\lambda)^{-4x}\lambda^x \leq 1 \quad \text{and} \quad {\lfloor r/8 \rfloor \choose x}(1+\lambda)^{-r/2} \leq 2^{r/8} \cdot 2^{-r/2} \leq 1
\]
and if $\lambda < 1$, then (noting that the function $x \mapsto (y/x)^x$ attains its maximum when $x = y/e$)
\[
(1+\lambda)^{-r/2} {\lfloor r/8 \rfloor \choose x}\lambda^x \leq e^{-\lambda r/4} \left( \frac{\lambda er}{8x} \right)^x \leq e^{-\lambda r/4 + \lambda r/8} \leq 1.
\]
Crucially, if $d$ is sufficiently large and $\lambda \geq C'd^{\beta - 1/2}(\log d)^2$ for a large positive constant $C'$, then
\[
\exp\left( \frac{C(\log d)^2}{d^{3/2}} L \right) \leq (1+\lambda)^{L/(2d^{1+\beta})}.
\]
Putting all of the above together, if $d$ is sufficiently large and $\lambda \geq C'd^{\beta - 1/2}(\log d)^2$, then it follows from~\eqref{ineq:muo-T2-Omega2Lrx} that
\begin{equation}
  \label{ineq:muo-T2-Omega2Lrx-final}
  \muo(\Omega^\beta_{2,L,r,x}) \leq (1+\lambda)^{-L/(2d^{1+\beta})}.
\end{equation}

We are now ready to estimate $\muo(\Omega_2^\beta)$. Since for every $\Gamma \in \OMCut$, we have $|\Gamma_r| \leq |\Gamma|$ and $|\Gamma| \geq 2d^2$ by Proposition~\ref{prop:Gamma-size}, then
\[
\Omega_2^\beta = \bigcup_{L=2d^2}^\infty \bigcup_{r=0}^L \bigcup_{x=0}^{r/8} \Omega_{2,L,r,x}^\beta
\]
and hence by the union bound and~\eqref{ineq:muo-T2-Omega2Lrx-final},
\[
\muo(\Omega_2^\beta) \leq \sum_{L = 2d^2}^{\infty} \sum_{r \leq L} \sum_{x \leq r/8} \muo(\Omega_{2,L,r,x}^\beta) \leq \sum_{L = 2d^2}^{\infty} L^2 \cdot (1+\lambda)^{-L/(2d^{1+\beta})}.
\]
We conclude that if $d$ is sufficiently large and $\lambda \geq C'd^{\beta - 1/2}(\log d)^2$ for some large positive constant $C'$, then $\muo(\Omega_2^\beta) \leq (1+\lambda)^{-d^{1-\beta}/4}$.

\subsubsection{Proof of Theorem~\ref{thm:Omega2-new}}

We will closely follow the proof of Theorem~\ref{thm:Omega2}. The main difference is that we will now further partition the set $\Omega_2^\beta$. To this end, recalling the definition of $\OMCut(\varepsilon)$ from Section~\ref{sec:counting-odd-cutsets}, for $k \in \N$, let
\[
\Omega_2^{\beta,k} = \{ \omega \in \Omega_2^\beta \colon \Break(\omega) \in \OMCut(2^{-k}) \}
\]
and note that $\Omega_2^\beta = \bigcup \{ \Omega_2^{\beta,k} \colon 1 \leq k \leq \beta \log_2 d \}$. Next, for integers $L$, $r$, $x$, and $k$, let
\[
\Omega_{2,L,r,x}^{\beta,k} = \{\omega \in \Omega_2^{\beta,k} \colon |\Gamma| = L, |\Gamma_r^j| = r, \text{ and } |X_j(\omega)| = x\},
\]
where $\Gamma = \Break(\omega)$ and $j = j(\Gamma) \in [2d]$ is such that $T_2(\omega) = T_{2,j}(\omega)$. Fix some $k$ and let $\varepsilon = 2^{-k}$. The first crucial observation that enables us to improve our bound on $\lambda$ is that by the definition of $\OMCut(\varepsilon)$, if $\omega \in \Omega_{2,L,r,x}^{\beta,k}$, then \eqref{ineq:maxj} implies that $r \geq 8x + \varepsilon L/(6d)$ and therefore as in~\eqref{ineq:muo-T2-omega-bLrx},
\begin{equation}
  \label{ineq:muoT2-eps}
  \muo(T_2(\omega)) \geq (1+\lambda)^{r/2+4x+{\varepsilon L/(12d)}}\lambda^{-x}\muo(\omega).
\end{equation}
On the other hand, an argument identical to the one explaining~\eqref{ineq:repetitionT2}, but now using Theorem~\ref{thm:OMCuteps-approx} and the fact that $\Break(\omega) \in \OMCut(\varepsilon)$ for every $\omega \in \Omega_2^{\beta,k}$, implies that there is a positive constant $C$ such that for every $\omega' \in \Fo$,
\begin{equation}
  \label{ineq:repetitionT2-eps}
  |\{\omega \in \Omega_{2,L,r,x}^{\beta,k} \colon \omega' \in T_2(\omega) \}| \leq \exp\left( \frac{C\sqrt{\varepsilon}(\log d)^2}{d^{3/2}} L \right) {\lfloor r/8 \rfloor \choose x}.
\end{equation}
Inequalities \eqref{ineq:muoT2-eps} and \eqref{ineq:repetitionT2-eps} and Lemma~\ref{lemma:double-counting} imply that
\[
\muo(\Omega_{2,L,r,x}^{\beta,k}) \leq \exp\left( \frac{C\sqrt{\varepsilon}(\log d)^2}{d^{3/2}} L \right) {\lfloor r/8 \rfloor \choose x}\lambda^x (1+\lambda)^{-r/2-4x-{\varepsilon L/(12d)}}.
\]
Crucially, if $d$ is sufficiently large and $\lambda \geq C'(\log d)^2 / \sqrt{\varepsilon d}$ for a large positive constant $C'$, then
\[
\exp\left( \frac{C\sqrt{\varepsilon}(\log d)^2}{d^{3/2}} L \right) \leq (1+\lambda)^{\varepsilon L/(24d)}
\]
and, as was shown in the proof of Theorem~\ref{thm:Omega2},
\[
{\lfloor r/8 \rfloor \choose x}\lambda^x(1+\lambda)^{-r/2-4x} \leq 1.
\]
It follows that if $d$ is sufficiently large and $\lambda \geq C'(\log d)^2 / \sqrt{\varepsilon d}$, then
\[
\muo(\Omega^{\beta,k}_{2,L,r,x}) \leq (1+\lambda)^{-\varepsilon L/(24d)}.
\]
Since $\varepsilon \geq d^{-\beta}$, a computation similar to the one done in the proof of Theorem~\ref{thm:Omega2} implies that whenever $d$ is sufficiently large and $\lambda \geq C'(\log d)^2 d^{(\beta - 1)/2}$, then
\[
\muo(\Omega_2^\beta) \leq (1+\lambda)^{-d^{1 - \beta}/48}.
\]

\section{Odd cutsets}

\label{sec:OMCut-proof}

Throughout this section, we fix an $\varepsilon \in [d^{-1/2}, 1/2]$ and assume that $d$ is sufficiently large. Since it will be convenient for us to work with a regular graph, we will consider the graph $G_n$ as a subgraph of the infinite graph $\Z^d$. In particular, we will assume that every vertex of $G$ has exactly $2d$ neighbors. As usual, every time we consider a $\Gamma \in \OMCut$, we let $E_0 = E_0(\Gamma)$, $E_1 = E_1(\Gamma)$, $E_{1,e} = E_{1,e}(\Gamma)$, $A_1 = A_1(\Gamma)$, and $A_0 = A_0(\Gamma) = V(G) \setminus A_1$. Given a $\delta \in \{0, 1\}$ and a condition $c \colon [2d] \to \{0, 1\}$, we will also write
\[
E_{\delta, c(\cdot)} = \{ v \in E_\delta \colon c(\PG(v)) = 1 \}.
\]
For example, $E_{1, \cdot \geq 2d-\sqrt{d}} = E_{1,e}$. Finally, for $\delta \in \{0,1\}$ and $v \in E_\delta$, following~\cite{Pe}, we let
\begin{align*}
  U_1(v) & = \{ v' \in E_\delta \colon vu, uv' \in E(G) \text{ for some $u \in A_\delta$} \}, \\
  U_2(v) & = \{ u \in N(v) \cap A_\delta \colon |N(u) \cap E_\delta| < \sqrt{\varepsilon d} \}, \quad \text{and} \\
  U_3(v) & = (N(U_2(v)) \cap E_\delta) \setminus \{v\}.
\end{align*}

We proceed by establishing a few properties of odd cutsets that will be useful in our further considerations.

\begin{prop}
  \label{prop:PG-sum}
  If $vw \in \Gamma$ for some $v, w \in V(G)$, then $\PG(v) + \PG(w) \geq 2d$.
\end{prop}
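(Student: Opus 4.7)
The plan is to reduce to a simple counting argument on the $2d$ coordinate directions around the edge $vw$. Since $\Gamma\in\OMCut$ and every edge of $\Gamma$ has one endpoint in $A_0(\Gamma)$ and one in $A_1(\Gamma)$, I may assume without loss of generality that $v\in E_1(\Gamma)$ and $w\in E_0(\Gamma)$, and that $w=v+f_j$ for some $j\in[2d]$. The edge $vw$ itself contributes $1$ to $\PG(v)$ and $1$ to $\PG(w)$, giving a contribution of $2$ to the sum $\PG(v)+\PG(w)$. The rest of the argument will produce $2d-2$ additional distinct edges of $\Gamma$, one for each coordinate direction $f_i$ with $f_i\neq\pm f_j$, each of which is incident to $v$ or to $w$.

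The key step is to fix an index $i\in[2d]$ with $f_i\neq\pm f_j$ and consider the two ``parallel'' vertices $v+f_i$ and $w+f_i=v+f_j+f_i$, which are adjacent in $G$. I will show that at least one of the edges $\{v,v+f_i\}$ and $\{w,w+f_i\}$ lies in $\Gamma$. Indeed, if $v+f_i\in A_0(\Gamma)$, then $\{v,v+f_i\}$ already crosses the cut. Otherwise $v+f_i\in A_1(\Gamma)$, and since $v\in E_1(\Gamma)$, Proposition~\ref{prop:NAdelta} applied with $\delta=1$ gives $N(v+f_i)\subseteq A_1(\Gamma)$; in particular $w+f_i\in A_1(\Gamma)$, while $w\in A_0(\Gamma)$, so $\{w,w+f_i\}\in\Gamma$. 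In either case, we locate an edge of $\Gamma$ incident to $v$ or $w$ that is not equal to $vw$.

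To finish, I need to verify that the $2d-2$ edges produced in this way (one per direction $i$ with $f_i\neq\pm f_j$) are pairwise distinct and distinct from $vw$. Distinctness from $vw$ is immediate since $\{v,v+f_i\}=\{v,w\}$ would force $f_i=f_j$ and $\{w,w+f_i\}=\{v,w\}$ would force $f_i=-f_j$, both excluded. Two edges of the form $\{v,v+f_i\}$ and $\{v,v+f_{i'}\}$ with $i\neq i'$ are different, and similarly for edges based at $w$; a collision $\{v,v+f_i\}=\{w,w+f_{i'}\}$ would again force $f_i=f_j$ or $f_{i'}=-f_j$. Hence these $2d-2$ edges are distinct and each contributes $1$ to $\PG(v)+\PG(w)$, so together with the contribution of $2$ from $vw$ we obtain $\PG(v)+\PG(w)\geq(2d-2)+2=2d$, as required. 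There is no real obstacle here; the only point to be careful about is to exclude both $f_i=f_j$ and $f_i=-f_j$ (the latter gives $w+f_i=v$, making the ``new'' edge coincide with $vw$), which is what confines the count to $2d-2$ genuinely new edges.
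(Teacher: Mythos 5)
Your proof is correct and rests on exactly the same key observation as the paper's: Proposition~\ref{prop:NAdelta} shows that whenever $v+f_i\in A_1$, the parallel vertex $w+f_i$ is also in $A_1$, so the corresponding edge at $w$ crosses the cut. The paper's bookkeeping is marginally more streamlined (it simply shows $\PG(w)\ge |I|=2d-\PG(v)$ directly, with $I=\{i : v+f_i\in A_1\}$, without needing to exclude $\pm f_j$ or track the edge $vw$ separately), but the substance is the same.
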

\begin{proof}
  Assume WLOG~that $v \in A_1$, write $w = v + f_j$, and note that $w \in A_0$. Let $I$ be the set of those indices $i$ such that $v + f_i \in A_1$ and note that $|I| = 2d - \PG(v)$. Since for every $i \in I$, $w$ is adjacent to $v + f_i + f_j$ and $v + f_i + f_j \in A_1$ by Proposition~\ref{prop:NAdelta}, it follows that $\PG(w) \geq |I|$.
\end{proof}

\begin{prop}
  \label{prop:U1v-size}
  For each $\delta \in \{0, 1\}$ and each $v \in E_\delta$, we have that
  \[
  |U_1(v)| \geq \PG(v)(2d - \PG(v)) - \min\{\PG(v), 2d - \PG(v)\}.
  \]
\end{prop}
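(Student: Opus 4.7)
The plan is to exhibit, for $v \in E_\delta$ with $k := \PG(v)$, an explicit injection from a large set of direction pairs at $v$ into $U_1(v)$ via the map $(a,b) \mapsto v + f_a + f_b$. Split the direction index set as $I := \{j \in [2d] \colon \{v, v+f_j\} \in \Gamma\}$, so $|I| = k$, and $J := [2d] \setminus I$, so $|J| = 2d - k$. By definition of $\Gamma$, we have $v + f_a \in A_\delta$ for every $a \in J$, while $v + f_b \in A_{1-\delta}$ for every $b \in I$.

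For each pair $(a,b) \in J \times I$ with $f_a + f_b \neq 0$, I would set $u := v + f_a$ and $v' := v + f_a + f_b$ and verify $v' \in E_\delta$ with witness $u$. First, $u \in A_\delta$ since $a \in J$, and Proposition~\ref{prop:NAdelta} then gives $N(u) \subseteq A_\delta$, so in particular $v' \in A_\delta$. Second, $v'$ is adjacent to $v + f_b \in A_{1-\delta}$, so $\PG(v') \geq 1$, whence $v' \in E_\delta$. Both edges $vu$ and $uv'$ lie in $E(G)$, so $v' \in U_1(v)$ as witnessed by $u$.

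For injectivity of $(a,b) \mapsto v + f_a + f_b$ on such pairs, suppose $f_a + f_b = f_{a'} + f_{b'}$. Since this common value is nonzero, it uniquely determines the unordered pair $\{f_a, f_b\}$ of signed basis vectors summing to it (the sum $e_i + e_j$ with $i \neq j$ forces $\{e_i, e_j\}$, the sum $e_i - e_j$ forces $\{e_i, -e_j\}$, and $2 e_i$ is impossible). Because $\{f_j \colon j \in J\}$ and $\{f_j \colon j \in I\}$ are disjoint subsets of $\{\pm e_1, \ldots, \pm e_d\}$, the only option is $f_a = f_{a'}$ and $f_b = f_{b'}$, hence $(a,b) = (a',b')$. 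The excluded pairs, those with $f_a = -f_b$, number at most $\min(|I|, |J|)$: for each fixed $a \in J$ there is at most one $b \in I$ with $f_b = -f_a$, and symmetrically for $b$. Combining injectivity with the count yields
\[
|U_1(v)| \;\geq\; |J| \cdot |I| - \min(|I|, |J|) \;=\; \PG(v)(2d - \PG(v)) - \min\{\PG(v), 2d - \PG(v)\}.
\]

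The essential geometric input is Proposition~\ref{prop:NAdelta}, which upgrades ``$u$ lies on the same side as $v$'' to ``all of $N(u)$ lies on that side''; without this one could not guarantee $v' \in A_\delta$. Everything else is straightforward bookkeeping on $\Z^d$, and I do not expect a genuine obstacle. The only small point requiring care is the antipodal exclusion $f_a = -f_b$, but its contribution matches exactly the $\min\{\PG(v), 2d - \PG(v)\}$ correction in the stated bound.
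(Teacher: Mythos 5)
Your proof is correct and follows essentially the same route as the paper's: both parameterize candidate elements of $U_1(v)$ by pairs of direction indices (one pointing into $A_\delta$, one across $\Gamma$), invoke Proposition~\ref{prop:NAdelta} to place the resulting vertex $v+f_a+f_b$ in $A_\delta$, and then count such pairs while subtracting the at most $\min\{\PG(v),2d-\PG(v)\}$ antipodal pairs. The only difference is that you spell out the membership $v'\in E_\delta$ and the injectivity of $(a,b)\mapsto v+f_a+f_b$, which the paper leaves implicit.
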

\begin{proof}
  Let $J \subseteq [2d]$ be the set of those indices $j$ such that $v + f_j \in A_\delta$ and note that $|J| = 2d-\PG(v)$. Then for all $j \in J$ and $i \in [2d] \setminus J$ such that $f_j \neq -f_i$, we have that $v + f_j + f_i \in U_1(v)$ by Proposition~\ref{prop:NAdelta} and the fact that $v + f_i \in A_{1-\delta}$. Finally, the number of such pairs of $i$ and $j$ is at least $|J|(2d-|J|) - \min\{|J|, 2d-|J|\}$.
\end{proof}

\begin{prop}
  \label{prop:PG-U3v}
  For each $\delta \in \{0, 1\}$, each $v \in E_\delta$, and all $w \in U_3(v)$, we have that $\PG(w) < \sqrt{\varepsilon d}$.
\end{prop}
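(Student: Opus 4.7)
The plan is to unpack the definition of $U_3(v)$ and then carefully count the neighbors of $w$ that can lie in $A_{1-\delta}$. Fix $w \in U_3(v)$ and, by definition, pick $u \in U_2(v)$ with $w \in N(u)$; in particular $u \in N(v) \cap A_\delta$ and $K := |N(u) \cap E_\delta| < \sqrt{\varepsilon d}$. Write $w = u + f_i$ for some $i \in [2d]$, and note that $w \in N(u) \cap E_\delta$, so $K \geq 1$.

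A first preparatory step is to show that $N(u) \subseteq A_\delta$: this follows from Proposition~\ref{prop:NAdelta} applied to $v \in E_\delta$ together with the index $j$ for which $u = v + f_j$. Likewise, every vertex $y \in A_\delta \setminus E_\delta$ has $\PG(y) = 0$, so no edge of $\Gamma$ is incident to $y$ and hence $N(y) \subseteq A_\delta$ (since $A_\delta$ is a connected component of $G \setminus \Gamma$). These two facts together will let me rule out most candidate contributors to $\PG(w)$.

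The main step is then a direct count of $\PG(w) = |\{j \in [2d] : w + f_j \in A_{1-\delta}\}|$ (using $w \in A_\delta$), splitting the indices into three groups. For $j = -i$, the vertex $w + f_j$ equals $u \in A_\delta$, so this $j$ does not contribute. For $j \neq \pm i$ with $u + f_j \in A_\delta \setminus E_\delta$, the observation above gives $w + f_j = (u + f_j) + f_i \in A_\delta$, so again no contribution. The only remaining indices that can contribute are therefore $j = i$ and those $j \neq \pm i$ for which $u + f_j \in E_\delta$.

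Finally, I would count the latter: since $u + f_i = w$ is already one element of $N(u) \cap E_\delta$, there are at most $K - 1$ further indices $j \neq i$ with $u + f_j \in E_\delta$, and certainly at most $K - 1$ with $j \neq \pm i$. Adding $1$ for the possible contribution of $j = i$ itself yields $\PG(w) \leq 1 + (K - 1) = K < \sqrt{\varepsilon d}$, as claimed. There is no real obstacle here; the only subtlety is the integer-valued bookkeeping around the indices $j = \pm i$ and remembering to include $w$ itself in the count $|N(u) \cap E_\delta|$.
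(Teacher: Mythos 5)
Your proof is correct and takes essentially the same route as the paper's: both arguments reduce to bounding $\PG(w)$ by $|N(u)\cap E_\delta| < \sqrt{\varepsilon d}$, using Proposition~\ref{prop:NAdelta} to see that $N(u)\subseteq A_\delta$. The only cosmetic differences are that you apply Proposition~\ref{prop:NAdelta} to $v$ (via the index with $u=v+f_j$) rather than to $w$, and you organize the count as a case analysis over indices $j$ instead of exhibiting, as the paper does, the injective map $i\mapsto u+f_i$ from $\{i: w+f_i\in A_{1-\delta}\}$ into $N(u)\cap E_\delta$; the underlying reasoning is the same.
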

\begin{proof}
  Let $j \in [2d]$ be such that $w + f_j \in U_2(v)$, let $I \subseteq [2d]$ be the set of those indices $i$ such that $w + f_i \in A_{1-\delta}$, and note that $|I| = \PG(w)$. We deduce from Proposition~\ref{prop:NAdelta} that $w + f_j + f_i \in E_\delta \cap N(w + f_j)$ for every $i \in I$, and hence $|I| < \sqrt{\varepsilon d}$ by the definition of $U_2(v)$.
\end{proof}

\subsection{The dominating set proposition}

The $\sqrt{\varepsilon}$-factor improvement of the upper bound on $\log|\E|$ in Theorem~\ref{thm:OMCuteps-approx} as compared to the bound in Theorem~\ref{thm:OMCut-approx} comes solely from the following refined version of the ``dominating set'' proposition~\cite[Proposition~4.15]{Pe}. Proposition~\ref{prop:Edeltat}, which is the driving force behind the $d^{1/12}$-factor improvement of the upper bound on $\lambda(d)$, is one of the main novelties in this paper. Recall the definitions of $U_1$, $U_2$, and $U_3$ from the beginning of this section and the definition of $\RG$ from Section~\ref{sec:counting-odd-cutsets}.

\begin{prop}
  \label{prop:Edeltat}
  There exists a constant $C$ such that for all $\Gamma \in \OMCut(\varepsilon)$, there exist $E_0^t \subseteq E_0(\Gamma)$ and $E_1^t \subseteq E_1(\Gamma)$ satisfying for both $\delta \in \{0,1\}$:
  \begin{enumerate}[(a)]
  \item
    $\RG(E_\delta^t) \leq \frac{C\sqrt{\varepsilon}\log d}{d^{3/2}}|\Gamma|$.
  \item
    If $v \in E_1$ and $|U_1(v)| \geq \sqrt{\varepsilon}d^{3/2}/2$, then $U_1(v) \cap E_1^t \neq \emptyset$.
  \item
    If $v \in E_{\delta, \cdot \geq d}$, then $|N(v) \cap E_{1-\delta} \cap N(E_\delta^t)| \geq \sqrt{\varepsilon d}$.
  \item
    If $v \in E_{\delta, \cdot \leq \sqrt{d}}$ and $|U_2(v)| \geq d/2$, then $U_3(v) \cap N(E_{1 - \delta}^t) \neq \emptyset$.
  \end{enumerate}
\end{prop}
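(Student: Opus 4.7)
The plan is to prove Proposition~\ref{prop:Edeltat} by the probabilistic method, extending the approach of~[Pe, Proposition~4.15] to exploit the $\OMCut(\varepsilon)$ hypothesis. Concretely, for $\delta \in \{0,1\}$ I would form $F_\delta \subseteq E_\delta$ by independent Bernoulli$(p_\delta)$ sampling with $p_1 := C\log d /(\sqrt{\varepsilon}\, d^{3/2})$ and $p_0 := C\sqrt{\varepsilon}\log d/d^{3/2}$ for a sufficiently large absolute constant $C$, and then define $E_\delta^t$ as $F_\delta$ augmented by a deterministic patch set that repairs any pointwise violations of (b)--(d). The asymmetry $p_1/p_0 = 1/\varepsilon$ is dictated by the difference in how tightly $\RG(E_\delta)$ can be bounded on the two sides.

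For (a), the central ingredient is the $\OMCut(\varepsilon)$-specific inequality
\[
\RG(E_1) = \sum_{v \in E_{1,e}} (2d - \PG(v)) + \sum_{v \in E_1 \setminus E_{1,e}} \PG(v) \leq |E_{1,e}|\sqrt{d} + |\Gamma_r| \leq \frac{|\Gamma|}{\sqrt{d}} + 2\varepsilon|\Gamma| \leq 3\varepsilon|\Gamma|,
\]
using $\varepsilon \geq d^{-1/2}$. Together with the trivial estimate $\RG(E_0) \leq |\Gamma|$, this gives $\mathbb{E}[\RG(F_\delta)] = p_\delta\, \RG(E_\delta) = O(\sqrt{\varepsilon}\log d/d^{3/2})\,|\Gamma|$ for both $\delta$, matching bound (a) up to absolute constants.

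Each of (b)--(d) would then be verified to hold with probability at least $1 - d^{-\Omega(C)}$ at each relevant vertex, and any residual offenders would be patched at $\RG$-cost at most $d$ apiece (so that for $C$ large enough the aggregate patch cost is absorbed into the target in (a)). For (b), the hit probability estimate is $(1-p_1)^{\sqrt{\varepsilon}d^{3/2}/2} \leq d^{-C/2}$ immediately. For (c), writing $H_u = N(u) \cap E_\delta$ for $u \in N(v) \cap E_{1-\delta}$, Proposition~\ref{prop:PG-sum} gives $|H_u| = \PG(u) \geq 2d - \PG(v)$, and a computation shows the expected number of $u$ with $H_u \cap F_\delta \neq \emptyset$ beats the target $\sqrt{\varepsilon d}$ by a factor $\Omega(\log d)$ in the ``typical'' range of $\PG(v)$. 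For (d), Proposition~\ref{prop:PG-U3v} bounds $\PG(w) < \sqrt{\varepsilon d}$ for $w \in U_3(v)$, so, invoking the hypothesis $|U_2(v)| \geq d/2$ together with a structural estimate on $|N(U_3(v)) \cap E_{1-\delta}|$ of the sort established in~[Pe, Section~4], the small sample $F_{1-\delta}$ hits $N(U_3(v))$ with overwhelming probability.

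The main obstacle is handling case (c) uniformly in $\PG(v)$: for vertices with $\PG(v)$ very close to $2d$ (so that $v \in E_{1,e}$), the neighbors $u \in N(v) \cap E_{1-\delta}$ can have $\PG(u)$ as small as $2d - \PG(v)$, driving the naive per-$u$ hit probability below what a direct expectation calculation can sustain. Overcoming this will require either a careful partition into $\PG(v)$-ranges, with deterministic additions to $E_\delta^t$ for very exposed vertices whose combined cost is kept in check via the tight bound $\RG(E_{1,e}) \leq |\Gamma|/\sqrt{d}$, or a refinement of the probabilistic count using the rigidity of $\mathbb{Z}^d$ to force many $u \in N(v)$ to have large $\PG(u)$ despite the worst-case permitted by Proposition~\ref{prop:PG-sum}. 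A secondary delicate point is the concentration step in (c) itself: the indicators $\{H_u \cap F_\delta \neq \emptyset\}_{u \in N(v) \cap E_{1-\delta}}$ are positively correlated because distinct $u$'s may share an $E_\delta$-neighbor, so standard Chernoff does not apply directly and I expect to resort to a second-moment computation or to extract a pairwise-disjoint subfamily of the $H_u$'s greedily.
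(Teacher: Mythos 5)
Your overall framework matches the paper's: form random sparse subsets of $E_0$ and $E_1$, verify (b)--(d) pointwise with failure probability $O(d^{-5})$, and patch offenders deterministically with $\RG$-cost controlled under (a). You also correctly extract the $\OMCut(\varepsilon)$ bound $\RG(E_1)\leq O(\varepsilon|\Gamma|)$ and use it to make the sampling asymmetric between the two sides. The genuine gap is in the sampling distribution: you propose uniform parameters $p_\delta$, whereas the paper samples each $v$ with a \emph{vertex-dependent} probability $p_v\propto \log d/(2d-\PG(v))$, multiplied by $\sqrt{\varepsilon/d}$ for $v\in E_0\cup E_{1,e}$ and by $1/\sqrt{\varepsilon d}$ for $v\in E_1\setminus E_{1,e}$.

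The inverse factor $1/(2d-\PG(v))$ is exactly what rescues (c) and (d), and the difficulty you flag in (c) is the true obstruction. For $v\in E_{\delta,\cdot\geq d}$ and $w\in N(v)\cap E_{1-\delta}$, the set $E(w)=(N(w)\cap E_\delta)\setminus\{v\}$ has $|E(w)|=\PG(w)-1$, and by Proposition~\ref{prop:PG-sum} every $v'\in E(w)$ satisfies $2d-\PG(v')\leq\PG(w)$; when $\PG(w)$ is small, $E(w)$ can be a singleton consisting of a nearly-maximally-exposed vertex. The paper's choice gives $p_{v'}\gtrsim\sqrt{\varepsilon/d}\,\log d/\PG(w)$, so $P\bigl(E(w)\cap E_\delta^s\neq\emptyset\bigr)\gtrsim\sqrt{\varepsilon/d}$ uniformly in $\PG(w)$, which is precisely what the Binomial concentration of the count over $w\in B(v)$ requires. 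With your uniform $p_1=C\log d/(\sqrt{\varepsilon}d^{3/2})$, the hit probability for a singleton $E(w)$ is $p_1\approx\sqrt{\varepsilon/d}\cdot C\log d/(\varepsilon d)$, which is $o(\sqrt{\varepsilon/d})$ over essentially the whole allowed range $\varepsilon\in[d^{-1/2},1/2]$; the same failure occurs in (d), where $F(w)$ can be a singleton with $\PG\geq 2d-\sqrt{\varepsilon d}$. Neither of your fallbacks closes this: deterministically adding the exposed vertices costs $\RG(E_{1,e})\approx|\Gamma|/\sqrt{d}$, overshooting the budget in (a) by $\Theta\bigl(d/(\sqrt{\varepsilon}\log d)\bigr)$, and no rigidity argument forces the $\PG(w)$ to be large, since Proposition~\ref{prop:PG-sum} is one-sided and can be saturated. (A minor remark: your idea of extracting disjoint $H_u$'s to handle correlation is in the right direction but rougher than what the paper uses; since each $v'$ lies in at most $2$ sets $E(w)$, sampling into $E(w)^s$ with halved probability makes $\bigcup_w E(w)^s$ stochastically dominated by $E_\delta^s$ while keeping the $E(w)^s$ mutually independent, so plain Chernoff applies without discarding any $w$.)
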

\begin{proof}
  Fix a $\Gamma \in \OMCut(\varepsilon)$ and recall that $\PG(v) \leq 2d-1$ for all $v \in V(G)$ by Proposition~\ref{prop:Gamma-non-trivial}. For each $v \in E_0 \cup E_1$, we let
  \[
  p_v =
  \begin{cases}
    \frac{30\log d}{(2d - \PG(v))} \cdot \frac{1}{\sqrt{\varepsilon d}} & \text{if $v \in E_1 \setminus E_{1,e}$}, \\
    \frac{30\log d}{(2d - \PG(v))} \cdot \sqrt{\frac{\varepsilon}{d}} & \text{otherwise}.
  \end{cases}
  \]
  Since $\varepsilon \geq d^{-1/2}$, if $d$ is sufficiently large, then $p_v \in (0,1]$ for all $v$. Now, for $\delta \in \{0,1\}$, we choose $E_\delta^s \subseteq E_\delta$ randomly by adding each $v \in E_\delta$ to $E_\delta^s$ with probability $p_v$ independently of all other vertices. We first claim that for each $\delta \in \{0,1\}$,
  \begin{equation}
    \label{ineq:ExRG-small}
    \Ex[\RG(E_\delta^s \cap E_{\delta, \cdot < d})] = \sum_{v \in E_{\delta, \cdot < d}} p_v \cdot \RG(v) = \sum_{v \in E_{\delta, \cdot < d}} p_v \cdot \PG(v) \leq \frac{60\sqrt{\varepsilon}\log d}{d^{3/2}}|\Gamma|.
  \end{equation}
  To see that the last inequality, note that because $1 / (2d - \PG(v)) \leq 1/d$ if $\PG(v) < d$, we have
  \[
  \sum_{v \in E_{0,\cdot < d}} p_v \cdot \PG(v) \leq \frac{30\sqrt{\varepsilon}\log d}{d^{3/2}} \cdot \sum_{v \in E_0} \PG(v) = \frac{30\sqrt{\varepsilon}\log d}{d^{3/2}} |\Gamma|
  \]
  whereas since $\Gamma \in \OMCut(\varepsilon)$, then $\sum_{v \in E_1 \setminus E_{1,e}} \PG(v) \leq 2\varepsilon|\Gamma|$ and hence
  \[
  \sum_{v \in E_{1,\cdot < d}} p_v \cdot \PG(v) \leq \frac{30\log d}{\sqrt{\varepsilon}d^{3/2}} \cdot \sum_{v \in E_1 \setminus E_{1,e}} \PG(v) \leq \frac{30\log d}{\sqrt{\varepsilon}d^{3/2}} \cdot 2\varepsilon|\Gamma|.
  \]
  Moreover,
  \begin{equation}
    \label{ineq:ExRG-large}
    \Ex[\RG(E_\delta^s \cap E_{\delta, \cdot \geq d})]  = \sum_{v \in E_{\delta, \cdot \geq d}} p_v \cdot \RG(v) = \sum_{v \in E_{\delta, \cdot \geq d}} p_v \cdot (2d - \PG(v)) \leq \frac{90\sqrt{\varepsilon}\log d}{d^{3/2}} |\Gamma|,
  \end{equation}
  where the last inequality follows because
  \[
  \sum_{v \in E_{0, \cdot \geq d}} p_v \cdot (2d - \PG(v)) = |E_{0, \cdot \geq d}| \cdot \frac{30\sqrt{\varepsilon}\log d}{\sqrt{d}} \leq \frac{30\sqrt{\varepsilon}\log d}{d^{3/2}}|\Gamma|,
  \]
  whereas since $\Gamma \in \OMCut(\varepsilon)$, then $|E_{1, \cdot \geq d} \setminus E_{1,e}| \leq 2\varepsilon|\Gamma|/d$ and hence
  \[
  \sum_{v \in E_{1, \cdot \geq d}} p_v \cdot (2d - \PG(v)) = \frac{30\log d}{\sqrt{d}}(|E_{1, \cdot \geq d} \setminus E_{1,e}|/\sqrt{\varepsilon} + |E_{1,e}| \cdot \sqrt{\varepsilon}) \leq \frac{30\log d}{d^{3/2}}(2\varepsilon/\sqrt{\varepsilon} + \sqrt{\varepsilon})|\Gamma|.
  \]
  Since $E_\delta = E_{\delta, \cdot < d} \cup E_{\delta, \cdot \geq d}$, it follows that $\RG(E_\delta^s) = \RG(E_\delta^s \cap E_{\delta, \cdot < d}) + \RG(E_\delta^s \cap E_{\delta, \cdot \geq d})$ and hence by~\eqref{ineq:ExRG-small} and \eqref{ineq:ExRG-large},
  \[
  \Ex[\RG(E_\delta^s)] \leq \frac{150\sqrt{\varepsilon}\log d}{d^{3/2}} |\Gamma|.
  \]
  By Markov's inequality,
  \begin{equation}
    \label{ineq:Pa}
    P\left(\RG(E_\delta^s) \geq \frac{450\sqrt{\varepsilon}\log d}{d^{3/2}}|\Gamma| \right) \leq \frac{1}{3}.
  \end{equation}

  \medskip

  Having part (b) in mind, let $v \in E_1$ be such that $|U_1(v)| \geq \sqrt{\varepsilon}d^{3/2}/2$. If $|U_1(v) \cap E_{1,e}| \geq \sqrt{\varepsilon}d^{3/2}/6$, then
  \[
  P(E_1^s \cap U_1(v) = \emptyset) \leq \prod_{w \in U_1(v) \cap E_{1,e}} (1-p_w) \leq \left( 1 - \frac{30\sqrt{\varepsilon}\log d}{\sqrt{d} \cdot \sqrt{d}} \right)^{\sqrt{\varepsilon}d^{3/2}/6} \leq e^{-5\varepsilon\sqrt{d}\log d} \leq \frac{1}{d^5},
  \]
  since $\varepsilon \geq 1/\sqrt{d}$. Otherwise, $|U_1(v) \setminus E_{1,e}| \geq \sqrt{\varepsilon}d^{3/2}/3$ and
  \[
  P(E_1^s \cap U_1(v) = \emptyset) \leq \prod_{w \in U_1(v) \setminus E_{1,e}} (1-p_w) \leq \left( 1 - \frac{30\log d}{2d \cdot \sqrt{\varepsilon d}} \right)^{\sqrt{\varepsilon}d^{3/2}/3} \leq e^{-15\log d/3} = \frac{1}{d^5}.
  \]
  Either way,
  \begin{equation}
    \label{ineq:Pb}
    P(E_1^s \cap U_1(v) = \emptyset) \leq d^{-5}.
  \end{equation}

  \medskip

  Fix a $\delta \in \{0, 1\}$ and let $v \in E_{\delta, \cdot \geq d}$. Having part~(c) in mind, we estimate $P(|N(v) \cap E_{1-\delta} \cap N(E_\delta^s)| < \sqrt{\varepsilon d})$. We first let $B(v) = N(v) \cap E_{1 - \delta, \cdot \geq 2}$ and then for each $w \in B(v)$, we let $E(w) = (N(w) \cap E_\delta) \setminus \{v\}$.

  \begin{claim}
    $|B(v)| \geq d-1$ and each $v' \in E_\delta$ belongs to at most $2$ of the $E(w)$s.
  \end{claim}
  \begin{proof}
    To see the first part, recall that $\PG(v) \leq 2d-1$ by Proposition~\ref{prop:Gamma-non-trivial} and hence there exists a $j \in [2d]$ such that $v + f_j \in A_\delta$. By Proposition~\ref{prop:NAdelta}, $v + f_j + f_i \in A_\delta$ for all $i \in [2d]$. Thus each $i \in [2d]$ for which $v + f_i \not\in A_\delta$ and $f_i \neq -f_j$ satisfies $v + f_i \in B(v)$ since $v + f_i$ is adjacent to both $v$ and $v + f_j + f_i$ and hence $\PG(v+f_i) \geq 2$. The second part is a simple consequence of the fact that $|N(w) \cap N(w')| \leq 2$ for every two distinct $w, w' \in V(G)$.
  \end{proof}

  Now, for each $w \in B(v)$, define a random set $E(w)^s$ by taking each $v' \in E(w)$ into $E(w)^s$ with probability $p_{v'}/2$. It follows from the second part of the above claim that
  \begin{equation}
    \label{eq:EwsEw}
    \bigcup_{w \in B(v)} E(w)^s \text{ is stochastically dominated by } E_\delta^s.
  \end{equation}
  Noting that $\PG(w) \geq 2$ for all $w \in B(v)$ by the definition of $B(v)$, $|E(w)| = \PG(w) - 1$ by the definition of $E(w)$, and $2d - \PG(v') \leq \PG(w)$ for all $v' \in E(w)$ by Proposition~\ref{prop:PG-sum}, we obtain that for sufficiently large $d$,
  \[
  P(E(w)^s = \emptyset) \leq \prod_{v' \in E(w)} \left( 1 - \frac{15\sqrt{\varepsilon}\log d}{(2d-\PG(v'))\sqrt{d}} \right) \leq \left( 1 - \frac{15\sqrt{\varepsilon}\log d}{\PG(w)\sqrt{d}} \right)^{\PG(w) - 1} \leq 1 - 15\sqrt{\frac{\varepsilon}{d}}.
  \]
  Finally, letting $Z_v = |\{w \in B(v) \colon E(w)^s \neq \emptyset \}|$, it follows that $Z_v$ stochastically dominates a $\Bin\left(|B(v)|, 15\sqrt{\varepsilon/d}\right)$ random variable. On the other hand, it follows from \eqref{eq:EwsEw} that $Z_v$ is stochastically dominated by $|\{N(v) \cap E_{1 -\delta} \cap N(E_\delta^s)\}|$. By our claim and Chernoff's inequality, we deduce that for sufficiently large $d$,
  \begin{equation}
    \label{ineq:Pc}
    P(|\{N(v) \cap E_{1 -\delta} \cap N(E_\delta^s)\}| < \sqrt{\varepsilon d}) \leq P(Z_v < \sqrt{\varepsilon d}) \leq e^{-\sqrt{\varepsilon d}} \leq d^{-5}.
  \end{equation}

  Fix a $\delta \in \{0, 1\}$ and let $v \in E_{\delta, \cdot \leq \sqrt{d}}$ satisfy $|U_2(v)| \geq d/2$. Having part (d) in mind, for each $w \in U_3(v)$, we let $F(w) = (N(w) \cap E_{1-\delta}) \setminus N(v)$ and let $U_3'(v) = \{w \in U_3(v) \colon F(w) \neq \emptyset\}$.

\begin{claim}
  For every $w \in U_3(v)$, $|F(w)| \geq \PG(w) - 1$ and $|U_3'(v)| \geq |U_3(v)| - \PG(v) \geq d/5$.
\end{claim}
\begin{proof}
  To see the first part, observe that for each $w \in U_3(v)$, since $w \in E_\delta$, then we have $|N(w) \cap E_{1-\delta}| = \PG(w) \geq 1$. Moreover, since $N(w) \cap N(v)$ contains a vertex of $U_2(v) \subseteq A_\delta$ and $|N(w) \cap N(v)| \leq 2$ since $w \neq v$, then $|N(w) \cap E_{1-\delta} \cap N(v)| \leq 1$. To see the second part, note first that since each $w \in U_3(v)$ has at most $2$ common neighbors with $v$ and at least one of them belongs to $U_2(v)$, then $|U_3(v)| - \PG(v) \geq |U_2(v)|/2 - \sqrt{d} \geq d/5$, where the last inequality follows from the assumption that $|U_2(v)| \geq d/2$. Next, recall that $\PG(u) \leq 2d-1$ for every vertex $u$ by Proposition~\ref{prop:Gamma-non-trivial} and let $j \in [2d]$ be such that $v + f_j \in E_{1-\delta}$. By the first part of this claim, it suffices to prove that there is at most one $w \in U_3(v)$ with $N(w) \cap E_{1-\delta} = \{v + f_j\}$. Since $\PG(v + f_j) \leq 2d-1$, there is an $i \in [2d]$ such that $v + f_j + f_i \in A_{1-\delta}$. Assume that some $w \in U_3(v)$ satisfies $N(w) \cap E_{1-\delta} = \{v + f_j\}$. Then clearly, $w = v + f_j + f_k$ for some $k \in [2d]$. If $f_k \neq -f_i$, then $\PG(w) \geq 2$ as $w + f_i \in N(v + f_j + f_i) \subseteq A_{1 - \delta}$ by Proposition~\ref{prop:NAdelta} and $w + f_i \neq v + f_j$. Hence, if $N(w) \cap E_{1-\delta} = \{v + f_j\}$, then $w = v + f_j - f_i$.
\end{proof}

Now, for each $w \in U_3'(v)$, define a random set $F(w)^s$ by independently taking each $v' \in F(w)$ into $F(w)^s$ with probability $p_{v'}/3$. Since each $w \in U_3'(v)$ is in distance $2$ from $v$ and each $F(w)$ consists only of vertices in distance $3$ from $v$, it follows that each $v' \in E_{1 - \delta}$ belongs to at most $3$ different $F(w)$s and hence
\begin{equation}
  \label{eq:FwsFw}
  \bigcup_{w \in U_3'(v)} F(w)^s \text{ is stochastically dominated by } E_{1-\delta}^s.
\end{equation}
Noting that $|F(w)| \geq \max\{\PG(w) - 1, 1\}$ (by the above Claim) and $2d - \PG(v') \leq \PG(w)$ (by Proposition~\ref{prop:PG-sum}) for all $w \in U'_3(v)$ and $v' \in F(w)$, we obtain that for sufficiently large $d$,
\[
P(F(w)^s = \emptyset) \leq \prod_{v' \in F(w)} \left( 1 - \frac{10\sqrt{\varepsilon}\log d}{(2d - \PG(v'))\sqrt{d}} \right) \leq \left( 1 - \frac{10\sqrt{\varepsilon}\log d}{\PG(w)\sqrt{d}} \right)^{|F(w)|} \leq 1 - 10\sqrt{\frac{\varepsilon}{d}}.
\]
Finally, letting $Z_v = |\{ w \in U_3'(v) \colon F(w)^s \neq \emptyset \}|$, it follows from \eqref{eq:FwsFw} that $Z_v$ is stochastically dominated by $|\{ w \in U_3(v) \colon N(v) \cap E_{1-\delta}^s \neq \emptyset \}|$. We deduce that for sufficiently large~$d$, by the above Claim,
\begin{equation}
  \label{ineq:Pd}
  P( U_3(v) \cap N(E_{1-\delta}^s) = \emptyset ) \leq P(Z_v = 0) \leq \left( 1 - 10\sqrt{\varepsilon/d} \right)^{|U_3'(v)|} \leq \left( 1 - 10\sqrt{\varepsilon/d} \right)^{d/5} \leq d^{-5},
\end{equation}
where the last inequality follows from the assumption that $\varepsilon \geq d^{-1/2}$.

We now aim to enlarge the sets $E_\delta^s$ slightly to create new sets $E_\delta^t$ that will satisfy the requirements of the proposition. Defining for each $\delta \in \{0, 1\}$,
\begin{align*}
  E_{1,1}^B & = \{ v \in E_{1} \colon |U_1(v)| \geq \sqrt{\varepsilon}d^{3/2}/2 \text{ and } U_1(v) \cap E_1^s = \emptyset \}, \\
  E_{\delta,2}^B & = \{ v \in E_{\delta, \cdot \geq d} \colon |N(v) \cap E_{1-\delta} \cap N(E_\delta^s)| < \sqrt{\varepsilon d} \}, \\
  E_{\delta,3}^B & = \{ v \in E_{\delta, \cdot \leq \sqrt{d}} \colon |U_2(v)| \geq d/2 \text{ and } U_3(v) \cap N(E_{1-\delta}^s) = \emptyset \}
\end{align*}
and using the three probabilistic estimates \eqref{ineq:Pb}, \eqref{ineq:Pc}, and \eqref{ineq:Pd}, we see that
\begin{equation}
  \label{ineq:maxEB}
  \max\left\{ \Ex[|E_{1,1}^B|], \Ex[|E_{\delta,2}^B|], \Ex[|E_{\delta,3}^B|] \right\} \leq \max\{|E_0|, |E_1|\} / d^5 \leq |\Gamma|/d^5.
\end{equation}
In order to guarantee that parts~(b), (c), and (d) of the proposition will be satisfied, for each $\delta \in \{0, 1\}$, we let $E_{\delta,3}^c$ be an arbitrary set of size at most $|E_{1-\delta,3}^B|$ containing a vertex from $N(U_3(v)) \cap E_\delta$ for every $v \in E_{1-\delta,3}^B$ and let
\[
E_0^t = E_0^s \cup E_{0,2}^B \cup E_{0,3}^c \quad \text{and} \quad E_1^t = E_1^s \cup E_{1,1}^B \cup E_{1,2}^B \cup E_{1,3}^c.
\]
Since $v \in U_1(v)$ for every $v \in E_1$ and $|N(v) \cap E_{1-\delta} \cap N(E_\delta^t)| = \PG(v)$ if $v \in E_\delta^t$, then by definition, $E_0^t$ and $E_1^t$ satisfy parts~(b), (c), and (d) of this proposition. Moreover, if we let $M = \max_{\delta,i}\{|E_{\delta,i}^B|\}$, then for each $\delta \in \{0,1\}$,
\[
\RG(E_\delta^t) \leq \RG(E_\delta^s) + 3dM.
\]
Hence, in order to guarantee that part~(a) holds, it is sufficient to show that with positive probability $\max_{\delta} \RG(E_\delta^s) \leq \left(C\sqrt{\varepsilon}\log d/d^{3/2}\right) \cdot |\Gamma|$ and $M \leq C|\Gamma|/d^5$ for some positive constant $C$. By Markov's inequality and \eqref{ineq:maxEB}, we have that
\[
P\left( M > 15|\Gamma|/d^5 \right) \leq P\left( \sum_{\delta, i} |E_{\delta,i}^B| > 15|\Gamma|/d^5 \right) < 1/3.
\]
Combined with \eqref{ineq:Pa} and the union bound, this completes the proof.
\end{proof}

\begin{prop}
  \label{prop:Et-G8-connected}
  Let $\Gamma \in \OMCut$ and assume that two sets $E_0^t \subseteq E_0$ and $E_1^t \subseteq E_1$ satisfy part~(c) of Proposition~\ref{prop:Edeltat}. Then for every $v \in E_1$, the set $\{v\} \cup E_0^t \cup E_1^t$ is connected in the graph $G^8$.
\end{prop}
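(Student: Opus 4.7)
The plan is to reduce connectivity of $\{v\}\cup E_0^t\cup E_1^t$ in $G^8$ to the $G^2$-connectedness of $E_1$ (guaranteed by Proposition~\ref{prop:Edelta-connected}) via a ``closeness'' lemma: for every $x\in E_0\cup E_1$, there is some $y\in E_0^t\cup E_1^t$ with $\dist_G(x,y)\le 3$. Granting this lemma, a $G^2$-path in $E_1$ from $v$ to any target, thickened to within distance $3$ of $E_0^t\cup E_1^t$ at each step, becomes a sequence of vertices in $\{v\}\cup E_0^t\cup E_1^t$ whose consecutive $G$-distances are at most $3+2+3=8$, which is exactly what we need.

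For the closeness lemma, let $x\in E_1$ and split on the value of $\PG(x)$. If $\PG(x)\ge d$, apply part~(c) of Proposition~\ref{prop:Edeltat} with $\delta=1$: since $\sqrt{\varepsilon d}\ge 1$, there exists $w\in N(x)\cap E_0\cap N(E_1^t)$, and any neighbor $z\in E_1^t$ of $w$ satisfies $\dist_G(x,z)\le 2$. If $\PG(x)<d$, then $x$ has at least one $\Gamma$-edge to a vertex $w\in N(x)\cap E_0$, and Proposition~\ref{prop:PG-sum} gives $\PG(w)\ge 2d-\PG(x)>d$, so we may apply part~(c) with $\delta=0$ at $w$: there is $y\in N(w)\cap E_1\cap N(E_0^t)$, and a neighbor $z\in E_0^t$ of $y$ yields $\dist_G(x,z)\le 3$. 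The case $x\in E_0$ is identical with the roles of $E_0$ and $E_1$ swapped (note that $x\in E_0$ always has some $\Gamma$-edge to a vertex of $E_1$, so the analogue of the second case still goes through).

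Now fix any $u\in E_0^t\cup E_1^t$; we produce a $G^8$-walk in $\{v\}\cup E_0^t\cup E_1^t$ from $v$ to $u$. If $u\in E_1^t$, invoke Proposition~\ref{prop:Edelta-connected} to obtain a $G^2$-path $v=x_0,x_1,\dots,x_k=u$ in $E_1$. Set $y_0=v$, $y_k=u$, and for $0<i<k$ pick $y_i\in E_0^t\cup E_1^t$ with $\dist_G(x_i,y_i)\le 3$ using the closeness lemma. Then for every $i$,
\[
\dist_G(y_i,y_{i+1})\le \dist_G(y_i,x_i)+\dist_G(x_i,x_{i+1})+\dist_G(x_{i+1},y_{i+1})\le 3+2+3=8,
\]
so $y_0,\dots,y_k$ is a walk in $G^8$ through $\{v\}\cup E_0^t\cup E_1^t$ from $v$ to $u$. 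If instead $u\in E_0^t$, then $\PG(u)\ge 1$ gives a neighbor $x_u\in E_1$; apply the previous argument to a $G^2$-path $x_u=z_0,z_1,\dots,z_\ell=v$ in $E_1$, setting $y_0=u$, $y_\ell=v$, and filling in the middle from the closeness lemma. The only modified estimate is $\dist_G(y_0,y_1)\le 1+2+3=6\le 8$, and all other consecutive distances are again at most $8$.

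The main obstacle is the low-$\PG$ case of the closeness lemma, where condition~(c) does not apply directly at $x$; the fix via Proposition~\ref{prop:PG-sum} forces the detour through $E_0$ and is what drives the constant up from the ``morally correct'' value of $6$ (achievable if every vertex had $\PG\ge d$) to $8$.
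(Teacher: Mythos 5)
Your proof is correct and follows the same approach as the paper: both reduce the claim to showing that every vertex of $E_0\cup E_1$ is within $G$-distance $3$ of $E_0^t\cup E_1^t$, and both establish this by the same two-case analysis on $\PG(w)$ versus $d$, using part~(c) of Proposition~\ref{prop:Edeltat} directly when $\PG(w)\ge d$ and detouring through a neighbor of high $\PG$ via Proposition~\ref{prop:PG-sum} otherwise. You merely spell out the thickening of the $G^2$-path into a $G^8$-walk, which the paper dismisses as "clearly suffices."
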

\begin{proof}
  Since by Proposition~\ref{prop:Edelta-connected}, the set $E_0 \cup E_1$ is connected in $G^2$ and $\{v\} \cup E_0^t \cup E_1^t \subseteq E_0 \cup E_1$, it clearly suffices to prove that $\dist_G(w, E_0^t \cup E_1^t) \leq 3$ for each $w \in E_0 \cup E_1$. To see this, fix some $\delta \in \{0, 1\}$ and $w \in E_\delta$. If $w \in E_{\delta, \cdot \geq d}$, then the fact that $E_\delta^t$ satisfies part~(c) of Proposition~\ref{prop:Edeltat} implies that $\dist_G(w, E_\delta^t) \leq 2$. If $w \in E_{\delta, \cdot < d}$, then by Proposition~\ref{prop:PG-sum}, we can find a $w' \in N(w) \cap E_{1-\delta, \cdot \geq d}$ and hence $\dist_G(w, E_{1-\delta}^t) \leq 1 + \dist_G(w', E_{1-\delta}^t) \leq 3$, again using the fact that $E_{1-\delta}^t$ satisfies part~(c) of Proposition~\ref{prop:Edeltat}.
\end{proof}

\subsection{Constructing an interior approximation}

\label{sec:constr-int-approx}

For $\Gamma \in \OMCut$, $v \in V(G)$, and $E \subseteq E(G)$, define $\NG(v) \colon [2d] \to \{0, 1\}$ by
\[
\NG(v)_j =
\begin{cases}
  1, & \text{if } v+f_j \in A_1, \\
  0, & \text{if } v+f_j \in A_0,
\end{cases}
\]
and let $\NG(E) = (\NG(v))_{v \in E}$. The next proposition formalizes the fact that for every $\Gamma \in \OMCut$, knowing only $E_0^t$ and $E_1^t$ satisfying parts~(b), (c), and (d) from Proposition~\ref{prop:Edeltat} and $\NG(E_\delta^t)$ for both $\delta \in \{0, 1\}$, we can construct a set $E$ that is an interior approximation to $\Gamma$. Such an $E$ is determined by the following algorithm:
\begin{enumerate}[(1)]
\item
  For $\delta \in \{0, 1\}$, let
  \begin{enumerate}[(a)]
  \item
    $R_\delta^a = \{ v + f_j \colon v \in E_{1-\delta}^t \text{ and } \NG(v)_j = \delta \}$,
  \item
    $R_\delta^b = \bigcup \{ N(v + f_j) \colon v \in E_\delta^t \text{ and } \NG(v)_j = \delta \}$.
  \end{enumerate}
\item
  For $\delta \in \{0, 1\}$, let $V_\delta = \{v \in V(G) \colon |N(v) \cap R_{1-\delta}^a| < \sqrt{\varepsilon d}\}$ and
  \[
  U = \{ u \in V_0 \colon N(u) \cap V_1 \cap R_1^a \neq \emptyset \}.
  \]
  Set $E = R_1^b \cup N(U)$.
\end{enumerate}

\begin{prop}
  \label{prop:int-approx}
  For any $\Gamma \in \OMCut$, the set $E$ obtained from the previous algorithm, taking as input sets $(E_\delta^t)_{\delta \in \{0, 1\}}$ satisfying parts~(b), (c), and (d) of Proposition~\ref{prop:Edeltat} and $(\NG(E_\delta^t))_{\delta \in \{0, 1\}}$ satisfies
  \[
  E_{1, \cdot < 2d - \sqrt{\varepsilon d}} \subseteq E \subseteq A_1.
  \]
  In particular, $E$ is an interior approximation to $\Gamma$.
\end{prop}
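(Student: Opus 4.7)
The plan is to prove the two inclusions $E \subseteq A_1$ and $E_{1,\,\cdot < 2d - \sqrt{\varepsilon d}} \subseteq E$ separately. Since $\varepsilon \le 1/2$ gives $\sqrt{\varepsilon d} \le \sqrt{d}$, we have $E_1 \setminus E_{1,e} \subseteq E_{1,\,\cdot < 2d - \sqrt{\varepsilon d}}$, so these two inclusions together imply that $E$ is an interior approximation in the sense of~\eqref{dfn:int_approx}.

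For the first inclusion I split $E = R_1^b \cup N(U)$ and handle the pieces separately. The containment $R_1^b \subseteq A_1$ is immediate from Proposition~\ref{prop:NAdelta}: if $v \in E_1^t$ and $\NG(v)_j = 1$ then $v + f_j \in A_1$, so $N(v + f_j) \subseteq A_1$. For $N(U) \subseteq A_1$ I aim for the stronger statement $U \subseteq A_1 \setminus E_1$, which suffices because $u \in A_1 \setminus E_1$ forces $\PG(u) = 0$ and hence $N(u) \subseteq A_1$. I then rule out each possibility in $U \cap (A_0 \cup E_1)$ in turn. If $u \in A_0 \setminus E_0$, then $N(u) \subseteq A_0$ is disjoint from $R_1^a \subseteq A_1$, so $u \notin U$. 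If $u \in E_0$ with $\PG(u) \ge d$, the identity $R_1^a = N(E_0^t) \cap A_1$ together with Proposition~\ref{prop:Edeltat}(c) gives $|N(u) \cap R_1^a| \ge \sqrt{\varepsilon d}$, so $u \notin V_0$. If $u \in E_0$ with $\PG(u) < d$, Proposition~\ref{prop:PG-sum} forces every $w \in N(u) \cap A_1$ to satisfy $\PG(w) > d$, and Proposition~\ref{prop:Edeltat}(c) applied to each such $w$ gives $|N(w) \cap R_0^a| \ge \sqrt{\varepsilon d}$, so no such $w$ lies in $V_1$. Finally, if $u \in E_1$, then any $w \in N(u) \cap R_1^a$ lies in $A_1$ with $\{u,w\} \notin \Gamma$, so Proposition~\ref{prop:NAdelta} forces $N(w) \subseteq A_1$; but $w \in N(E_0^t)$ provides a neighbor of $w$ in $A_0$, a contradiction, giving $N(u) \cap R_1^a = \emptyset$.

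For the second inclusion I fix $v \in E_1$ with $\PG(v) < 2d - \sqrt{\varepsilon d}$ and split on the size of $U_1(v)$. If $|U_1(v)| \ge \sqrt{\varepsilon}\, d^{3/2}/2$, then Proposition~\ref{prop:Edeltat}(b) supplies $v' \in U_1(v) \cap E_1^t$ and a common neighbor $u = v' + f_j \in A_1$ with $\NG(v')_j = 1$, placing $v \in N(v' + f_j) \subseteq R_1^b$. Otherwise, Proposition~\ref{prop:U1v-size} together with $\PG(v) < 2d - \sqrt{\varepsilon d}$ forces $\PG(v) < \sqrt{\varepsilon d}$, so $v \in E_{1,\,\cdot \le \sqrt{d}}$; a double-counting step (each element of $U_1(v)$ is reached from $v$ through at most two common neighbors) shows that $|U_2(v)| < d/2$ would contradict $|U_1(v)| < \sqrt{\varepsilon}\, d^{3/2}/2$. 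Hence $|U_2(v)| \ge d/2$ and Proposition~\ref{prop:Edeltat}(d) yields some $v' \in U_3(v) \cap N(E_0^t)$. Picking $u \in U_2(v)$ with $v' \in N(u)$, I verify $u \in U$ as follows. Since $v \in E_1$ and $u \in N(v) \cap A_1$, Proposition~\ref{prop:NAdelta} gives $N(u) \subseteq A_1$; any $w \in N(u) \cap R_1^a$ then has a neighbor in $E_0^t \subseteq A_0$, forcing $w \in E_1$, so $|N(u) \cap R_1^a| \le |N(u) \cap E_1| < \sqrt{\varepsilon d}$ by the definition of $U_2(v)$, putting $u \in V_0$; moreover Proposition~\ref{prop:PG-U3v} bounds $\PG(v') < \sqrt{\varepsilon d}$, so $|N(v') \cap R_0^a| \le \PG(v') < \sqrt{\varepsilon d}$ and $v' \in V_1$. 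Thus $u \in U$ and $v \in N(u) \subseteq N(U) \subseteq E$.

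The main technical obstacle is the careful coordination of thresholds: the $\sqrt{\varepsilon d}$ cutoff in the definitions of $V_0$ and $V_1$ is precisely what lets Proposition~\ref{prop:Edeltat}(c) rule out high-$\PG$ vertices while Proposition~\ref{prop:PG-U3v} handles the witness $v'$ produced by~(d), and the $\sqrt{\varepsilon}\, d^{3/2}/2$ threshold in~(b) is calibrated against Proposition~\ref{prop:U1v-size} so that the small-$\PG(v)$ regime lies exactly where~(d) applies. The underlying observation driving the verification that $u \in V_0$ in the second inclusion is that whenever $N(u) \subseteq A_1$ one has $N(u) \cap R_1^a \subseteq N(u) \cap E_1$; this is the reason $U_2(v)$ is defined through the bound $|N(u) \cap E_1| < \sqrt{\varepsilon d}$.
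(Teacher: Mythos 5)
Your proof is correct and follows essentially the same approach as the paper's, relying on the same key facts (Propositions NAdelta, PG-sum, U1v-size, PG-U3v, and parts (b)--(d) of the dominating-set proposition) and the identities $R_\delta^a = N(E_{1-\delta}^t)\cap A_\delta$. The only organizational differences are cosmetic: you merge the paper's Claims~4 and~5 into the single statement $U\subseteq A_1\setminus E_1$, you dispose of the case $u\in E_1$ via Proposition~\ref{prop:NAdelta} rather than the parity observation $U\subseteq\Ve$ that the paper uses, and in the second inclusion you split on $|U_1(v)|$ rather than on the range of $\PG(v)$ — but since Proposition~\ref{prop:U1v-size} links the two quantities, the case analysis is the same in substance.
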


To gain some understanding of the above algorithm, note that $R_\delta^a$ and $R_\delta^b$ consist of vertices that we know are in $E_\delta$ and $A_\delta$, respectively, directly from the fact that $E_\delta^t \subseteq E_\delta$ and the definition of $\NG(E_\delta^t)$. It is relatively straightforward to show that $E_{1, \sqrt{\varepsilon d} < \cdot < 2d - \sqrt{\varepsilon d}} \subseteq R_1^b$ and hence our main difficulty lies in showing that vertices of $E_{1, \cdot \leq \sqrt{\varepsilon d}}$ can also be recovered. To this end, we define $V_\delta$, which is shown to be disjoint from $E_{\delta, \cdot \geq d}$. We deduce that $U \subseteq A_1 \cap \Ve$ and hence $N(U) \subseteq A_1$ from the definition of $\OMCut$. Finally, we are able to show that $E_{1, \cdot \leq \sqrt{d}} \setminus R_1^b \subseteq N(U)$.

\begin{proof}[Proof of Proposition~\ref{prop:int-approx}]
  The proof is via several claims.

  \begin{cl}
    \label{cl:1}
    For each $\delta \in \{0, 1\}$, we have $R_\delta^a \subseteq E_\delta$ and $R_\delta^b \subseteq A_\delta$.
  \end{cl}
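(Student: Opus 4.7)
The plan is to unwind definitions: both inclusions in the claim are essentially immediate consequences of the fact that $\NG$ literally records which side of $\Gamma$ a neighbor lies on, combined with Proposition~\ref{prop:NAdelta}. No probabilistic input from Proposition~\ref{prop:Edeltat} is needed for this first step; parts (a)--(d) of that proposition will only be invoked in later claims when showing that $E$ actually covers all of $E_{1,\cdot<2d-\sqrt{\varepsilon d}}$.

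To prove $R_\delta^a\subseteq E_\delta$, I would pick an arbitrary $u\in R_\delta^a$ and write $u=v+f_j$ with $v\in E_{1-\delta}^t$ and $\NG(v)_j=\delta$. Since $E_{1-\delta}^t\subseteq E_{1-\delta}\subseteq A_{1-\delta}$, one has $v\in A_{1-\delta}$, while $\NG(v)_j=\delta$ means exactly $u=v+f_j\in A_\delta$. Hence the edge $\{v,u\}$ has one endpoint in $A_{1-\delta}$ and one in $A_\delta$, so by the definition of $\Gamma$ as the edge set between $A_0$ and $A_1$, this edge lies in $\Gamma$. Therefore $\PG(u)\geq 1$, and combined with $u\in A_\delta$ this gives $u\in E_\delta$ by the definition of $E_\delta(\Gamma)$.

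To prove $R_\delta^b\subseteq A_\delta$, fix $w\in R_\delta^b$; by the definition of $R_\delta^b$ there exist $v\in E_\delta^t$ and $j\in[2d]$ with $\NG(v)_j=\delta$ (equivalently, $v+f_j\in A_\delta$) such that $w\in N(v+f_j)$. Now Proposition~\ref{prop:NAdelta} applies verbatim: it says that if $v\in E_\delta(\Gamma)$ and $v+f_j\in A_\delta(\Gamma)$, then $N(v+f_j)\subseteq A_\delta(\Gamma)$. Since $v\in E_\delta^t\subseteq E_\delta$ and $v+f_j\in A_\delta$, this yields $w\in N(v+f_j)\subseteq A_\delta$, as required.

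There is no real obstacle here; the claim is a bookkeeping statement that sets the stage for the harder claims to follow, in which one has to argue that $E_{1,\sqrt{\varepsilon d}<\cdot<2d-\sqrt{\varepsilon d}}\subseteq R_1^b$ directly from the definition of $R_1^b$, and then use part (d) of Proposition~\ref{prop:Edeltat} together with the auxiliary set $U$ to catch the remaining exposed vertices in $E_{1,\cdot\leq\sqrt{\varepsilon d}}$.
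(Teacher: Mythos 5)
Your proof is correct and follows essentially the same route as the paper: for $R_\delta^a$ you unwind the definition of $\NG$ to see the edge $\{v, v+f_j\}$ crosses between $A_{1-\delta}$ and $A_\delta$ and hence lies in $\Gamma$, forcing $v+f_j\in E_\delta$; for $R_\delta^b$ you invoke Proposition~\ref{prop:NAdelta} exactly as the paper does. The only difference is that you spell out the $\PG(u)\geq 1$ step that the paper leaves implicit.
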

  By the definition of $R_\delta^a$, each of its elements is of the form $v + f_j$ for some $v \in E_{1-\delta}^t \subseteq E_{1-\delta}$ and $j \in [2d]$ such that $\NG(v)_j = \delta$. Then $v + f_j \in A_\delta$ by the definition of $\NG(v)$, but since $v \in E_{1-\delta}$, then in fact $v + f_j \in E_\delta$. Each element of $R_\delta^b$ belongs to $N(v + f_j)$ for some $v \in E_\delta^t \subseteq E_\delta$ and $j \in [2d]$ such that $\NG(v)_j = \delta$. Since $v \in E_\delta$ and $v + f_j \in A_\delta$, then $N(v + f_j) \subseteq A_\delta$ by Proposition~\ref{prop:NAdelta}.

  \begin{cl}
    \label{cl:2}
    $E_{1, \sqrt{\varepsilon d} < \cdot < 2d - \sqrt{\varepsilon d}} \subseteq R_1^b$.
  \end{cl}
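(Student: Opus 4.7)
The plan is to fix an arbitrary $w \in E_{1, \sqrt{\varepsilon d} < \cdot < 2d - \sqrt{\varepsilon d}}$ and exhibit a vertex $v' \in E_1^t$ and index $j \in [2d]$ with $\NG(v')_j = 1$ such that $w \in N(v' + f_j)$; this will place $w$ in $R_1^b$ by its definition. The source of the witness $v'$ will be part~(b) of Proposition~\ref{prop:Edeltat} applied at $w$, and the link back to $w$ will come directly from the definition of $U_1$.

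Concretely, I first apply Proposition~\ref{prop:U1v-size} at $w$ with $a = \PG(w)$ and $b = 2d - \PG(w)$, where both $a, b > \sqrt{\varepsilon d}$. Since $a + b = 2d$, the product $ab$ is minimized on the boundary of the admissible region, giving $ab \geq \sqrt{\varepsilon d}(2d - \sqrt{\varepsilon d})$, and the correction term $\min\{a,b\}$ is at most $d$. A routine estimate then shows $|U_1(w)| \geq \sqrt{\varepsilon}d^{3/2}/2$ for $d$ large (using $\varepsilon \geq d^{-1/2}$). Part~(b) of Proposition~\ref{prop:Edeltat} applied to $w$ yields some $v' \in U_1(w) \cap E_1^t$.

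By definition of $U_1(w)$, there exist $u \in A_1$ and indices $i, j \in [2d]$ with $u = w + f_i = v' + f_j$. Since $v' \in E_1^t \subseteq E_1 \subseteq A_1$ and $u = v' + f_j \in A_1$, we have $\NG(v')_j = 1$. Moreover, $w \in N(u) = N(v' + f_j)$ by construction. Hence $w \in R_1^b$, which proves the claim.

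The only nontrivial step is verifying the lower bound $|U_1(w)| \geq \sqrt{\varepsilon} d^{3/2}/2$, and this is straightforward arithmetic using $\varepsilon \geq d^{-1/2}$. No other obstacle is anticipated, and the argument does not need parts~(c) or~(d) of Proposition~\ref{prop:Edeltat}; those will enter only in the companion claims handling vertices with very large or very small $\PG$-degree.
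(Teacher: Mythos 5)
Your proposal is correct and follows essentially the same route as the paper: invoke Proposition~\ref{prop:U1v-size} to get $|U_1(w)| \geq \sqrt{\varepsilon}d^{3/2}/2$, then apply part~(b) of Proposition~\ref{prop:Edeltat} to get a witness $v' \in U_1(w) \cap E_1^t$, then unwind the definitions of $U_1$ and $R_1^b$. Your intermediate bound is marginally cruder (subtracting $d$ for the correction term rather than $\sqrt{\varepsilon d}$ as in the paper), but it still yields the required $\sqrt{\varepsilon}d^{3/2}/2$ under $\varepsilon \geq d^{-1/2}$; and you spell out the final inference $v' \in U_1(w) \cap E_1^t \Rightarrow w \in R_1^b$, which the paper leaves implicit.
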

  Fix a $v \in E_{1, \sqrt{\varepsilon d} < \cdot < 2d - \sqrt{\varepsilon d}}$ and note that $|U_1(v)| \geq \sqrt{\varepsilon d}(2d - \sqrt{\varepsilon d}) - \sqrt{\varepsilon d} \geq \sqrt{\varepsilon}d^{3/2}/2$ by Proposition~\ref{prop:U1v-size}. Since $E_1^t$ satisfies part (b) of Proposition~\ref{prop:Edeltat}, it follows that $E_1^t \cap U_1(v) \neq \emptyset$ and therefore $v \in R_1^b$.

  \begin{cl}
    \label{cl:3}
    For each $\delta \in \{0, 1\}$, we have $E_{\delta, \cdot \geq d} \cap V_\delta = \emptyset$.
  \end{cl}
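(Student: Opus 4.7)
The plan is to derive the claim almost immediately from part~(c) of Proposition~\ref{prop:Edeltat} once the set $R_{1-\delta}^a$ is rewritten in a more transparent form. Specifically, I would show that $R_{1-\delta}^a = E_{1-\delta} \cap N(E_\delta^t)$, after which the claim reduces to a direct application of part~(c).

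First I would unfold the definition of $R_{1-\delta}^a$. By construction, it consists of all vertices of the form $w = u + f_j$ with $u \in E_\delta^t$ and $\NG(u)_j = 1-\delta$. Since $u \in E_\delta \subseteq A_\delta$ and $\NG(u)_j = 1-\delta$ forces $w \in A_{1-\delta}$, each such $w$ lies in $E_{1-\delta}$ and has $u \in E_\delta^t$ as a neighbor, giving $R_{1-\delta}^a \subseteq E_{1-\delta} \cap N(E_\delta^t)$. For the reverse inclusion, any $w \in E_{1-\delta}$ adjacent to some $u \in E_\delta^t$ can be written $w = u + f_j$ for some $j \in [2d]$, and since $w \in A_{1-\delta}$ we have $\NG(u)_j = 1-\delta$, so $w \in R_{1-\delta}^a$.

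With this identification in hand, fix an arbitrary $v \in E_{\delta, \cdot \geq d}$. Part~(c) of Proposition~\ref{prop:Edeltat}, which by the hypothesis of Proposition~\ref{prop:int-approx} applies to our $E_0^t$ and $E_1^t$, immediately yields
\[
|N(v) \cap R_{1-\delta}^a| = |N(v) \cap E_{1-\delta} \cap N(E_\delta^t)| \geq \sqrt{\varepsilon d},
\]
so $v$ fails the defining condition of $V_\delta$ and the claim follows. I do not anticipate any real obstacle here; the statement is essentially a restatement of part~(c) in the vocabulary of the algorithm from Section~\ref{sec:constr-int-approx}, and the only verification to make is the parity-correct identification of $R_{1-\delta}^a$ above.
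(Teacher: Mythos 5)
Your proof is correct and follows essentially the same route as the paper's: identify $R_{1-\delta}^a$ with (a superset of) $E_{1-\delta}\cap N(E_\delta^t)$ and then invoke part~(c) of Proposition~\ref{prop:Edeltat}. The only difference is that you establish the full equality $R_{1-\delta}^a = E_{1-\delta}\cap N(E_\delta^t)$, whereas the paper only records the one inclusion $E_{1-\delta}\cap N(E_\delta^t)\subseteq R_{1-\delta}^a$ that the bound actually requires; this is cosmetic and both verifications are sound.
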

  Fix $\delta \in \{0, 1\}$ and $v \in E_{\delta, \cdot \geq d}$. Any vertex in $N(v) \cap E_{1-\delta} \cap N(E_\delta^t)$ is in $N(v) \cap R_{1-\delta}^a$. The claim follows since $E_\delta^t$ satisfies part~(c) of Proposition~\ref{prop:Edeltat}.

  \begin{cl}
    \label{cl:4}
    $U \subseteq A_1 \cap \Ve$.
  \end{cl}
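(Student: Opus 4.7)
The plan is to verify the two properties separately: evenness and membership in $A_1$. Fix $u\in U$. By the definition of $U$, there exists $w\in N(u)\cap V_1\cap R_1^a$. By Claim~\ref{cl:1}, $R_1^a\subseteq E_1\subseteq\Vo$, so $w$ is odd, and since $u\sim w$, $u$ is even.

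For the harder part, I would argue by contradiction: suppose $u\in A_0$. Since $w\in R_1^a\subseteq E_1\subseteq A_1$, the edge $uw$ has one endpoint in $A_0$ and one in $A_1$, hence $uw\in\Gamma$; in particular $u\in E_0$. Now I would feed the two vertices into Claim~\ref{cl:3} and into Proposition~\ref{prop:PG-sum}. On the one hand, $u\in E_0\cap V_0$, so Claim~\ref{cl:3} with $\delta=0$ forces $\PG(u)<d$. On the other hand, $w\in E_1\cap V_1$, so Claim~\ref{cl:3} with $\delta=1$ forces $\PG(w)<d$. Adding these gives $\PG(u)+\PG(w)<2d$. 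But $uw\in\Gamma$, and Proposition~\ref{prop:PG-sum} then yields $\PG(u)+\PG(w)\geq 2d$, a contradiction. Hence $u\in A_1$, completing the claim.

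The step requiring most care is just the reduction $u\in E_0$ once we assume $u\in A_0$. The key observation is that $w$ has already been placed in $A_1$ via the chain $w\in R_1^a\subseteq E_1\subseteq A_1$, so $uw$ crosses the cutset, and the minimality of $\Gamma$ (in the definition of $\OMCut$) ensures $uw\in\Gamma$ and therefore $u\in E_0(\Gamma)=A_0\cap\{P_\Gamma>0\}$. After that, the rest is a direct double invocation of Claim~\ref{cl:3} together with Proposition~\ref{prop:PG-sum}, so no delicate combinatorial estimates are needed. The only real content being used is that the sets $E_\delta^t$ satisfy part~(c) of Proposition~\ref{prop:Edeltat} (which is what Claim~\ref{cl:3} encodes), and that vertices on opposite sides of a cutset edge have complementary large neighborhoods in $A_\delta$, which is precisely Proposition~\ref{prop:PG-sum}.
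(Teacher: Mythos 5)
Your proof is correct and uses exactly the same two ingredients as the paper (Claim~\ref{cl:3} and Proposition~\ref{prop:PG-sum}). The only cosmetic difference is that you apply Claim~\ref{cl:3} symmetrically to both endpoints $u$ and $w$ of the cutset edge and add, whereas the paper splits into cases $\PG(u)\geq d$ and $\PG(u)<d$; your version is slightly cleaner but the content is identical.
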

  Let $u \in U$. Since $N(u) \cap R_1^a \neq \emptyset$ and $R_1^a \subseteq E_1 \subseteq \Vo$ by Claim~\ref{cl:1} and the definition of $\OMCut$, then $u \in N(R_1^a) \subseteq \Ve$. Assume for contradiction that $u \not\in A_1$. Since $R_1^a \subseteq E_1$ and $u \in N(R_1^a)$, then $u \in E_0$. If $\PG(u) \geq d$, then $u \not\in V_0$ by Claim~\ref{cl:3}, contradicting the fact that $u \in U$. If $\PG(u) < d$, then by Proposition~\ref{prop:PG-sum}, $\PG(v) \geq 2d - \PG(u) \geq d$ for every $v \in E_1 \cap N(u)$. It follows form Claim~\ref{cl:3} that $N(u) \cap E_1 \cap V_1 = \emptyset$, so in particular, $N(u) \cap R_1^a \cap V_1 = \emptyset$, which again contradicts the fact that $u \in U$.

  \begin{cl}
    \label{cl:5}
    $N(U) \subseteq A_1$.
  \end{cl}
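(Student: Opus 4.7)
The plan is to chain Claim~\ref{cl:4} with the defining property of $\OMCut$. By Claim~\ref{cl:4}, every $u \in U$ lies in $A_1 \cap \Ve$. Since $\Gamma \in \OMCut$ we have $E_1(\Gamma) \subseteq \Vo$ by definition, so no even vertex can belong to $E_1$; hence $U$ is disjoint from $E_1$, and each $u \in U$ must actually sit in $A_1 \setminus E_1$.

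Next I would unpack the definition $E_1(\Gamma) = \{v \in A_1(\Gamma) \colon \PG(v) > 0\}$: membership in $A_1 \setminus E_1$ forces $\PG(u) = 0$, so no edge of $\Gamma$ is incident to $u$. Since $u \in A_1$ and all $2d$ edges at $u$ avoid $\Gamma$, each of the $2d$ neighbors of $u$ remains in the same connected component of $G \setminus \Gamma$, namely $A_1$. Therefore $N(u) \subseteq A_1$, and taking the union over $u \in U$ yields $N(U) \subseteq A_1$.

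There is essentially no obstacle here; the substantive geometric work was done in Claim~\ref{cl:4}. The only point to watch is to cite the correct definitions, in particular that $\OMCut$ consists precisely of those minimal cutsets whose ``inner'' boundary $E_1$ is entirely odd, which is what forbids an even vertex of $A_1$ from lying on the cutset. One could alternatively derive the same conclusion by picking any $v \in E_1 \cap N(R_1^a)$ witnessing $u \in U$ and appealing to Proposition~\ref{prop:NAdelta} with $\delta = 1$ and $u = v + f_j$ for the appropriate $j$, but the parity argument above is more direct.
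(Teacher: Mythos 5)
Your proof is correct and follows exactly the same route as the paper's: invoke Claim~\ref{cl:4} to place $U$ in $A_1 \cap \Ve$, use the oddness of $E_1(\Gamma)$ to conclude $U \subseteq A_1 \setminus E_1(\Gamma)$, and observe that such vertices have all neighbors inside $A_1$. You have simply written out the intermediate steps that the paper compresses into a single sentence.
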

  This follows immediately from Claim~\ref{cl:4} since $E_1$, the boundary of $A_1$, is a subset of $\Vo$.

  \begin{cl}
    \label{cl:6}
    $E_{1, \cdot \leq \sqrt{d}} \subseteq E$.
  \end{cl}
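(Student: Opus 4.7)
The plan is to fix $v \in E_1$ with $\PG(v) \leq \sqrt{d}$ and prove $v \in R_1^b \cup N(U) = E$ by splitting on $\PG(v)$, and when necessary on the size of $U_2(v)$. If $\sqrt{\varepsilon d} < \PG(v) \leq \sqrt{d}$, then for large $d$ we have $\PG(v) < 2d - \sqrt{\varepsilon d}$, so Claim~\ref{cl:2} yields $v \in R_1^b \subseteq E$ immediately. The genuine case is $\PG(v) \leq \sqrt{\varepsilon d}$, in which $v$ has at least $2d - \sqrt{\varepsilon d}$ neighbors in $A_1$, and I would dichotomize based on whether $|U_2(v)| \geq d/2$ or $|U_2(v)| < d/2$.

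Suppose first that $|U_2(v)| \geq d/2$. Then part~(d) of Proposition~\ref{prop:Edeltat} applied with $\delta = 1$ supplies some $w \in U_3(v) \cap N(E_0^t)$, which by definition of $U_3(v)$ comes with an intermediate vertex $u \in U_2(v)$ satisfying $u \in N(v) \cap A_1$ and $w \in N(u) \cap E_1$. I would then verify $u \in U$ by checking its three defining clauses. First, $u \in V_0$: since $u \in U_2(v)$, $|N(u) \cap E_1| < \sqrt{\varepsilon d}$, and since $R_1^a \subseteq E_1$ by Claim~\ref{cl:1}, we obtain $|N(u) \cap R_1^a| < \sqrt{\varepsilon d}$. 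Second, $w \in R_1^a$: writing $w = v' + f_j$ for the unique $v' \in E_0^t$ with $w \in N(v')$, we have $w \in E_1 \subseteq A_1$, so $\NG(v')_j = 1$ and thus $w \in R_1^a$. Third, $w \in V_1$: Proposition~\ref{prop:PG-U3v} gives $\PG(w) < \sqrt{\varepsilon d}$, hence $|N(w) \cap R_0^a| \leq |N(w) \cap E_0| = \PG(w) < \sqrt{\varepsilon d}$. Hence $u \in U$, and $v \in N(u) \subseteq N(U) \subseteq E$.

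Suppose now $|U_2(v)| < d/2$. Then at least $(2d - \sqrt{\varepsilon d}) - d/2 \geq d$ of the neighbors $u \in N(v) \cap A_1$ satisfy $|N(u) \cap E_1| \geq \sqrt{\varepsilon d}$. For each such $u$, every vertex in $(N(u) \cap E_1) \setminus \{v\}$ lies in $U_1(v)$. Since in $\Z^d$ any two distinct vertices share at most two common neighbors, each $v' \in U_1(v) \setminus \{v\}$ is counted at most twice in the double sum $\sum_u |(N(u) \cap E_1) \setminus \{v\}|$. Thus
\[
|U_1(v)| \geq \tfrac{1}{2} \cdot d \cdot (\sqrt{\varepsilon d} - 1) \geq \sqrt{\varepsilon} d^{3/2}/2
\]
for sufficiently large $d$ (using $\varepsilon \geq d^{-1/2}$ to absorb the lower-order term). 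Part~(b) of Proposition~\ref{prop:Edeltat} then produces $v' \in U_1(v) \cap E_1^t$, witnessed by some $u' \in A_1$ adjacent to both $v$ and $v'$. Writing $u' = v' + f_j$, membership $u' \in A_1$ forces $\NG(v')_j = 1$, so $v \in N(u') \subseteq R_1^b \subseteq E$.

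The main technical point to be careful about is the constant balance in the last subcase: the dichotomy threshold $d/2$ on $|U_2(v)|$ must be chosen compatibly with the threshold $\sqrt{\varepsilon} d^{3/2}/2$ appearing in part~(b) of Proposition~\ref{prop:Edeltat}, so that the double counting comfortably clears the hurdle even in the extreme regime $\PG(v) = 1$; this works because $\varepsilon d \geq \sqrt{d}$, making $\sqrt{\varepsilon} d^{3/2}$ dominate the $d$-sized error term from subtracting $v$ and from the at-most-$2$ overlap.
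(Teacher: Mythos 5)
Your proposal takes essentially the same route as the paper: the sub-case $|U_2(v)|\geq d/2$ is handled by part~(d) of Proposition~\ref{prop:Edeltat} producing $u\in U_2(v)$ and $w\in U_3(v)\cap N(E_0^t)$, verifying $u\in V_0$, $w\in R_1^a\cap V_1$ via Claim~\ref{cl:1} and Proposition~\ref{prop:PG-U3v}, hence $u\in U$ and $v\in N(U)$; and the sub-case $|U_2(v)|<d/2$ is handled by lower-bounding $|U_1(v)|$ and invoking part~(b). Your preliminary split into $\PG(v)>\sqrt{\varepsilon d}$ (handled by Claim~\ref{cl:2}) versus $\PG(v)\leq\sqrt{\varepsilon d}$ is harmless but unnecessary, since the dichotomy on $|U_2(v)|$ already works uniformly for all $v\in E_{1,\cdot\leq\sqrt{d}}$.

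There is, however, a small arithmetic slip in the final display of the $|U_2(v)|<d/2$ case. After (correctly) excluding $v$ from each $N(u)\cap E_1$, you bound the number of good $u$'s below by $d$ and write
\[
|U_1(v)| \geq \tfrac{1}{2}\cdot d\cdot(\sqrt{\varepsilon d}-1) \geq \sqrt{\varepsilon}\,d^{3/2}/2,
\]
but the right-hand inequality is equivalent to $\sqrt{\varepsilon d}-1\geq\sqrt{\varepsilon d}$, which is false. The fix is immediate: you actually have at least $3d/2-\sqrt{\varepsilon d}-|U_2(v)| > 3d/2-\sqrt{d}-d/2 \geq 1.4d$ good $u$'s (for large $d$), and $\tfrac{1}{2}\cdot 1.4d\cdot(\sqrt{\varepsilon d}-1)\geq \sqrt{\varepsilon}\,d^{3/2}/2$ holds as soon as $\sqrt{\varepsilon d}\geq 3.5$, which follows from $\varepsilon d\geq\sqrt{d}$. (The paper avoids this by not subtracting $1$ from $\sqrt{\varepsilon d}$; you were more careful about the $v'=v$ contribution but then bounded the count of good $u$'s too loosely. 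Keep the care, just use the sharper count.)
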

  Let $v \in E_{1, \cdot \leq \sqrt{d}}$. We consider two cases.

  \smallskip
  \noindent
  {\bf Case 1.} $|U_2(v)| < d/2$. By the definition of $U_2(v)$, for any $j \in [2d]$ such that $v + f_j \in A_1 \setminus U_2(v)$, we have at least $\sqrt{\varepsilon d}$ indices $i \in [2d]$ such that $v + f_j + f_i \in E_1$. Since there are $2d - |U_2(v)| - \PG(v)$ such indices $j$ and each vertex can be represented in the form $v + f_j + f_i$ in at most two ways, we have that $|U_1(v)| \geq (2d - |U_2(v)| - \PG(v))\sqrt{\varepsilon d}/2 \geq \sqrt{\varepsilon}d^{3/2}/2$ provided that $d$ is sufficiently large. Since $E_1^t$ satisfies part (b) of Proposition~\ref{prop:Edeltat}, it follows that $U_1(v) \cap E_1^t \neq \emptyset$ and hence $v \in R_1^b$.

  \smallskip
  \noindent
  {\bf Case 2.} $|U_2(v)| \geq d/2$. In this case, since $E_0^t$ satisfies part (d) of Proposition~\ref{prop:Edeltat}, there exist $i, j \in [2d]$ such that $v + f_j \in U_2(v)$ and $v + f_j + f_i \in U_3(v) \cap N(E_0^t)$. In particular, $v + f_j + f_i \in R_1^a$. Using that $R_1^a \subseteq E_1$ by Claim~\ref{cl:1}, we have that $|N(v + f_j) \cap R_1^a| \leq |N(v + f_j) \cap E_1| < \sqrt{\varepsilon d}$ and hence $v + f_j \in V_0$. Proposition~\ref{prop:PG-U3v} implies that $|N(v + f_j + f_i) \cap E_0| = \PG(v + f_j + f_i) < \sqrt{\varepsilon d}$. Hence, since $R_0^a \subseteq E_0$ by Claim~\ref{cl:1}, we have that $v + f_j + f_i \in V_1$. It follows that $v + f_j \in U$ and hence $v \in N(U)$.

  \medskip
  Proposition~\ref{prop:int-approx} follows from Claims~\ref{cl:1}, \ref{cl:2}, \ref{cl:5}, and \ref{cl:6}.
\end{proof}

\subsection{Counting interior approximations}

Given $R \in \N$ and $E \subseteq V(G)$, define
\[
\cN(R, E) = \{ \NG(E) \colon \Gamma \in \OMCut \text{ satisfying } E \subseteq E_0(\Gamma) \cup E_1(\Gamma) \text{ and } \RG(E) = R \}.
\]
Since $1 \leq \PG(v) \leq 2d-1$ for every $v \in E_0(\Gamma) \cup E_1(\Gamma)$ by Proposition~\ref{prop:Gamma-non-trivial}, then $\RG(E) \geq |E|$ for every $E \subseteq E_0(\Gamma) \cup E_1(\Gamma)$. It follows that if $|E| > R$, then $\cN(R,E) = \emptyset$.

\begin{prop}
  \label{prop:cNRE}
  For every $R \in \N$ and $E \subseteq V(G)$, we have $|\cN(R, E)| \leq (2d)^{2R}$.
\end{prop}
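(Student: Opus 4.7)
The plan is a direct counting argument whose key observation is that for any $\Gamma$ contributing to $\cN(R, E)$ and any $v \in E$, the value $\RG(v) = \min(\PG(v), 2d - \PG(v))$ is determined by the vector $\NG(v) \in \{0,1\}^{2d}$ alone, with no reference to whether $v \in E_0(\Gamma)$ or $v \in E_1(\Gamma)$. Indeed, writing $a = |\{j \in [2d] : \NG(v)_j = 1\}|$, one has $\PG(v) = 2d - a$ if $v \in E_1(\Gamma)$ and $\PG(v) = a$ if $v \in E_0(\Gamma)$, so in both cases $\RG(v) = \min(a, 2d - a)$. Since $1 \leq \PG(v) \leq 2d - 1$ for every $v \in E$ by Proposition~\ref{prop:Gamma-non-trivial}, this also gives $\RG(v) \geq 1$, and moreover $(\RG(v))_{v \in E}$ is a function of $\NG(E)$.

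I would then bound $|\cN(R, E)|$ by summing over choices of $(r_v)_{v \in E}$: for any composition of $R$ into $|E|$ positive parts (there are $\binom{R-1}{|E|-1}$ of these by stars-and-bars), and for each $v$, the number of $\NG(v) \in \{0,1\}^{2d}$ with $\RG(v) = r_v$ is at most $2\binom{2d}{r_v} \leq 2(2d)^{r_v}$—the sequences in question being precisely those with either $r_v$ or $2d - r_v$ ones, and the bound $\binom{2d}{r} \leq (2d)^r$ absorbing the binomial. Multiplying over $v \in E$ and summing over the valid compositions yields
\[
|\cN(R, E)| \leq 2^{|E|} \binom{R-1}{|E|-1} (2d)^R.
\]

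To match the stated bound, I would apply $\binom{R-1}{|E|-1} \leq \binom{R}{|E|}$ together with the identity $\sum_{k=0}^{R} \binom{R}{k} 2^k = 3^R$, which gives $2^{|E|}\binom{R}{|E|} \leq 3^R$ and hence
\[
|\cN(R, E)| \leq 3^R (2d)^R = (6d)^R \leq (4d^2)^R = (2d)^{2R},
\]
where the final inequality uses $d \geq 2$. No genuine obstacle arises in this argument; the only subtle point is the opening observation that $\RG(v)$ does not depend on which side of the cut $v$ lies on, which is exactly what prevents an extra factor of $2$ from entering the per-vertex count and breaking the bound.
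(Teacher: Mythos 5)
Your proof is correct, and it is essentially the same approach as the paper's: parameterize by a degree-type profile over $E$, bound the number of possibilities for $\NG(v)$ per vertex by a binomial coefficient (absorbed via $\binom{2d}{k}\le (2d)^{\min\{k,2d-k\}}$), multiply, and sum over profiles. The only differences are bookkeeping: the paper sums over the actual degree profile $(\PG(v))_{v\in E}\in[2d-1]^E$ and bounds the number of such profiles trivially by $(2d)^{|E|}$, then uses $|E|\le R$; you instead sum over the coarser regularity profile $(\RG(v))_{v\in E}$, pay a factor of $2$ per vertex (since $\NG(v)$ could have $r_v$ or $2d-r_v$ ones), count compositions via stars-and-bars, and absorb $2^{|E|}\binom{R-1}{|E|-1}\le 3^R$ via the binomial theorem, finally using $d\ge 2$. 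Both routes arrive cleanly at $(2d)^{2R}$. One small remark: your opening observation that $\RG(v)$ is determined by $\NG(v)$ without knowing the side of the cut is true, but for odd cutsets it is also automatic from the parity of $v$ (even vertices of $E$ lie in $E_0$, odd in $E_1$), which is what the paper implicitly relies on via $\binom{2d}{\PG(v)}=\binom{2d}{2d-\PG(v)}$; either way no spurious factor of $2$ arises from the side choice.
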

\begin{proof}
  As we remarked above, WLOG~we may assume that $|E| \leq R$. Let
  \[
  \Delta = \left\{ D \in [2d-1]^E \colon \sum_{v \in E} \min \{ D_v, 2d - D_v \} = R \right\}
  \]
  and note that if $\Gamma \in \OMCut$ satisfies $E \subseteq E_0(\Gamma) \cup E_1(\Gamma)$ and $\RG(E) = R$, then $(\PG(v))_{v \in E} \in \Delta$. Moreover, given $v \in E$ and $\PG(v)$, the number of possibilities for $\NG(v)$ is at most ${2d \choose \PG(v)}$. It follows that
  \[
    |\cN(R, E)| \leq \sum_{D \in \Delta} \prod_{v \in E} {2d \choose D_v} \leq \sum_{D \in \Delta} \prod_{v \in E} (2d)^{\min\{D_v, 2d - D_v\}} = |\Delta| \cdot (2d)^R \leq (2d)^{|E|+R} \leq (2d)^{2R}.
    \qedhere
  \]
\end{proof}

\begin{proof}[{\bf Proof of Theorem~\ref{thm:OMCuteps-approx}}]
  Fix an integer $L$, $\varepsilon \in [d^{-1/2},1/2]$, and let $R' = \frac{2C\sqrt{\varepsilon}\log d}{d^{3/2}}L$, where $C$ is the constant from the statement of Proposition~\ref{prop:Edeltat}. For every $\Gamma \in \OMCut(\varepsilon)$ and each $\delta \in \{0, 1\}$, let $E_\delta^t(\Gamma) \subseteq E_\delta(\Gamma)$ be an arbitrary set satisfying parts~(a)--(d) of Proposition~\ref{prop:Edeltat}. Let $\cA$ be the algorithm described in Section~\ref{sec:constr-int-approx}. We show that the family $\E$ defined by
  \[
  \E = \{ \cA(E_0^t(\Gamma), E_1^t(\Gamma), \NG(E_0^t(\Gamma)), \NG(E_1^t(\Gamma))) \colon \Gamma \in \OMCut(\varepsilon) \text{ and } |\Gamma| = L \}
  \]
  satisfies the assertion of Theorem~\ref{thm:OMCuteps-approx}. It follows from Proposition~\ref{prop:int-approx} that $\E$ contains an interior approximation to every $\Gamma \in \OMCut(\varepsilon)$ with $|\Gamma| = L$, so in order to complete the proof, we only need to give an upper bound on the cardinality of $\E$. Recall that $x$ is a fixed even vertex of $G_n$ that is separated from $B_n$ by every cutset in $\OMCut$. Hence, for every $\Gamma$ as above, there is an $\ell_\Gamma \in \{0, \ldots, L\}$ such that $x + \ell_\Gamma f_1 \in E_1(\Gamma)$. In particular, it follows from Proposition~\ref{prop:Et-G8-connected} that $\{x + \ell_\Gamma f_1\} \cup E_0^t(\Gamma) \cup E_1^t(\Gamma)$ is connected in $G^8$. Certainly, $|\E|$ is not larger than the number of tuples $(\ell, R, E_0^t, E_1^t, X)$ such that $0 \leq \ell \leq L$, $0 \leq R \leq R'$, $X \in \cN(R, E_0^t \cup E_1^t)$,
  \begin{equation}
    \label{eq:Edeltat-properties}
    E_0^t, E_1^t \subseteq V(G), \quad |E_0^t|, |E_1^t| \leq R, \quad \text{and} \quad \{x + \ell f_1\} \cup E_0^t \cup E_1^t \text{ is connected in } G^8.
  \end{equation}
  For non-negative $\ell$ and $R$, by Lemma~\ref{lemma:connected-sets}, the number $S_{\ell, R}$ of sets satisfying~\eqref{eq:Edeltat-properties} can be bounded as follows
  \[
  S_{\ell, R} \leq \sum_{M=0}^{2R+1} 2^M \cdot \left(\Delta(G^8)\right)^{2M-2} \leq \sum_{M=0}^{2R+1} 2^M \cdot \left((2d)^9\right)^{2M} \leq \sum_{M = 0}^{2R} (2d)^{19M} \leq (2d)^{38R+1} \leq (2d)^{39R}.
  \]
  Now, with $E_0^t$ and $E_1^t$ satisfying~\eqref{eq:Edeltat-properties} fixed, $|\cN(R, E_0^t \cup E_1^t)| \leq (2d)^{2R}$ by Proposition~\ref{prop:cNRE}. It follows that if $d$ is sufficiently large, then
  \[
  |\E| \leq \sum_{\ell = 0}^L \sum_{R = 0}^{R'} (2d)^{39R + 2R} \leq (L + 1) \cdot \sum_{R = 0}^{R'} (2d)^{41R} \leq (L+1) \cdot (2d)^{41R' + 1} \leq \exp\left(\frac{C'\sqrt{\varepsilon}(\log d)^2}{d^{3/2}}L\right),
  \]
  where the last inequality follows from the fact that $L \geq 2d^2$ (which we may assume by Proposition~\ref{prop:Gamma-size}) and $\varepsilon \geq d^{-1/2}$.
\end{proof}

\bibliographystyle{amsplain}
\nocite{*}
\bibliography{hard-core}

\end{document}